\numberwithin{equation}{section}
\newtheorem{assumption}{Assumption}[section]
\newtheorem{lemma}[assumption]{Lemma}
\newtheorem{theorem}[assumption]{Theorem}
\newtheorem{proposition}[assumption]{Proposition}
\newtheorem{corollary}[assumption]{Corollary}
\newtheorem{definition}[assumption]{Definition}
\journal{Arxiv}
\begin{document}

\begin{frontmatter}



\title{Policy Learning for Perturbance-wise Linear Quadratic Control Problem} 


%
            
\author[label1]{Haoran Zhang}
\affiliation[label1]{organization={Department of Applied Mathematics, The Hong Kong Polytechnic University},
             country={Hong Kong}}
             
\author[label2]{Wenhao Zhang}
\affiliation[label2]{organization={Department of Applied Mathematics, The Hong Kong Polytechnic University},
             country={Hong Kong}}
 
\author[label3]{Xianping Wu}            
\affiliation[label3]{organization={School of School of Mathematics and Statistics, Guangdong University of Technology},
             city={GuangZhou},
             postcode={510520},
             state={Guangdong},
             country={China}}

\begin{abstract}
We study finite horizon linear–quadratic control with additive noise in a perturbance-wise framework that unifies the classical model, a constraint embedded affine policy class, and a distributionally robust formulation with a Wasserstein ambiguity set. Based on an augmented affine representation, we model feasibility as an affine perturbation and unknown noise as distributional perturbation from samples, thereby addressing constrained implementation and model uncertainty in a single scheme. First, we construct an implementable policy gradient method that accommodates nonzero noise means estimated from data. Second, we analyze its convergence under constant stepsizes chosen as simple polynomials of problem parameters, ensuring global decrease of the value function. Finally, numerical studies: mean–variance portfolio allocation and dynamic benchmark tracking on real data, validating stable convergence and illuminating sensitivity trade-offs across horizon length, trading cost intensity, state penalty scale, and estimation window.
\end{abstract}



\begin{keyword}
linear–quadratic control; distributionally robust optimization; Wasserstein ambiguity sets; policy gradient methods; mean-variance problem.
\end{keyword}

\end{frontmatter}


\section{Introduction}
Linear–quadratic (LQ) optimal control is a fundamental component of stochastic control, where linear dynamics combined with quadratic performance indices produce feedback laws defined by algebraic riccati equations (ARE) for both finite and infinite horizons. Fundamental research delineated the structural characteristics of optimum regulators and the significance of the ARE in stochastic contexts, offering a robust framework for study and applications in engineering and finance \cite{kalman1960contributions,wonham1968matrix}.

Realistic deployments impose actuator, budget, or regulatory limits. A major line of work shows that constrained LQ problems admit explicit offline solutions in the form of piecewise-affine feedback laws and explicit controller characterizations in parametric explicit Model Predictive Control (MPC), which utilize through a scalar constrained example to demonstrate structure \cite{bemporad2002explicit,wu2018explicit}. Other advances refine computation and implementation, such as binary search trees, lattice/regular partitions, hardware synthesis, etc. \cite{tondel2003evaluation,wen2009analytical,kvasnica2011stabilizing}. We will leverage this perspective through a scalar constrained example that decomposes into piecewise classical LQ subproblems. 

Beyond hard constraints, a second pressure is distributional uncertainty: the noise may be unknown, light-tailed, or shift across regimes. Distributionally robust optimization (DRO) addresses such misspecification by optimizing against the worst case distribution in an ambiguity set. Wasserstein ball ambiguity has emerged as a statistically grounded and tractable choice with finite sample guarantees and convex reformulations \cite{mohajerin2018data}. Recent results bring Wasserstein DRO directly into LQ control, proving that an observation linear (and in the zero-mean case, linear) policy remains optimal and providing efficient algorithms via least  favorable distributions \cite{taskesen2023distributionally}. Related work also investigates regret optimal and infinite horizon variants under Wasserstein ambiguity \cite{kargin2024wasserstein}. These developments motivate treating unknown noise via a Wasserstein  penalized and minimax formulation in LQ related studies.

Instead of solving Riccati equations model-based, reinforcement learning (RL) frames control design as policy optimization: directly update a parameterized feedback to minimize the expected finite horizon LQ cost via gradient steps on the performance objective. A key insight is that, despite nonconvexity, the LQ landscape is benign, for time invariant LQ, policy gradient achieves global convergence to the optimal stabilizing controller under suitable stepsizes and regularity, revealing a clear link between RL’s policy search and classical optimal control \cite{fazel2018global}. These guarantees extend to finite horizon, stochastic settings, which is relevant to our problem, where policy gradient on time-varying gains converges and projected variants accommodate constraints \cite{hambly2021policy}. When gradients are estimated from rollouts, derivative-free policy gradient achieves polynomial sample complexity over linear policies tailored to LQ systems, further cementing LQ as a precise proving ground for RL methods \cite{malik2020derivative}; complementary lines quantify data requirements for learning LQ controllers from limited experiments \cite{dean2020sample}. \\

\noindent \textbf{Contributions}. We study policy gradient methods for finite horizon LQ control with additive noise. The analysis covers a scalar  state constrained case, revealing a piecewise-LQ structure and a DRO extension in which the unknown noise distribution is modeled via a Wasserstein ambiguity set calibrated from samples. We introduce an augmented affine representation that turns feasibility constraints into policy structure and accommodates nonzero noise means, and we show that the resulting policy gradient admits an exact, implementable form. We establish a global convergence guarantee for constant stepsizes chosen as low  order polynomials of problem parameters, and we connect Riccati optimality with gradient based updates under both constrained and DRO formulations. Empirically, we compare constrained and unconstrained solutions in a mean–variance portfolio setting and evaluate the DRO policy on a dynamic benchmark, tracking task with sensitivity analyses over horizon, trading cost intensity, state penalty scale, and estimation window. \\

\noindent \textbf{Notations and Organization}. Vectors and matrices are real. $\mathbb{R}^n$ and $\mathbb{R}^{n\times m}$ denote Euclidean spaces; $\|\cdot\|_2$ and $\|\cdot\|_F$ are spectral and Frobenius norms; for a matrix $Z$, $\sigma_{\min}(Z)$ is its smallest singular value and $\operatorname{Tr}(\cdot)$ is the trace; $\mathbf{I}_n$ and $\mathbf{0}_{n\times m}$ are identity and zero matrices, especially, $\mathbf{1}$ is $\mathbf{I}_n$ with $n=1$; $Z^\top$ denotes transpose. For a sequence $\mathbf{D}=(D_0,\ldots,D_T)$, write $|||\mathbf{D}|||=\sum_{t=0}^T\|D_t\|$. For a symmetric matrix $A$, we write $A\succ 0$ (resp.\ $A\succeq 0$) to denote positive definite (resp.\ semidefinite).

The rest of the paper proceeds as follows. Section 2 formulates the finite horizon LQ problem with additive noise, presents the scalar constrained example, and introduces the   Wasserstein penalized DRO model. Section 3 develops the augmented-affine policy gradient and proves global convergence under a constant stepsize. Section 4 reports numerical results: a mean–variance study contrasting constrained and unconstrained cases, and a real data benchmark tracking application that evaluates the DRO controller and conducts targeted sensitivity analyses. Section 5 concludes and outlines extensions.

\section{Problem Setup} \label{ch3-setup}
\subsection{Classical LQ Problem}
Let the set of admissible policies be
$$
\Pi=\{\pi: \mathcal{X} \rightarrow \mathcal{P}(\mathcal{U})\},
$$
where $\mathcal{X}$ denotes the state space, $\mathcal{U}$ the action space, and $\mathcal{P}(\mathcal{U})$ the set of probability measures on $\mathcal{U}$. Each policy $\pi \in \Pi$ assigns to every state $x \in \mathcal{X}$ a probability distribution over actions in $\mathcal{U}$.

The decision maker seeks an optimal policy by solving
\begin{equation}
\min _{\pi \in \Pi} \mathbb{E}_{x \sim \mathcal{D}}\left[J_\pi(x)\right],
\label{eqp-2.1.1}
\end{equation}
where the value function $J_{\pi}$ in the finite-horizon setting is given by
\begin{equation} \label{mainfunc}
J_\pi(x) =\mathbb{E}_\pi \left[\sum _{t=0}^{T-1}\left( x_{t}^{\top } Q_{t}  x_{t}  +u_{t}^{\top } R_{t} u_{t}\right) +x_{T}^{\top } Q_{T}  x_{T}\Bigg| x_0=x  \right],\end{equation}
for $t = 0,1,\cdots, T-1$, and the state dynamics subject to
\begin{equation}
\begin{aligned}
x_{t+1} = A_{t}x_{t} + B_{t}u_{t} + D_t w_{t}, \ x_0 \sim \mathcal{D}.
\label{Xt+1}
\end{aligned}
\end{equation}
Consider a discrete-time system with state vector $x_t \in \mathcal{X} =\mathbb{R}^m$ and control input $u_t \in \mathcal{U}= \mathbb{R}^n$. The initial state $x_0$ is drawn from a probability distribution $\mathcal{D}$ on $\mathbb{R}^m$. The matrices $A \in \mathbb{R}^{m \times m}$ and $B \in \mathbb{R}^{m \times n}$ specify the linear dynamics of the system, while $Q \in \mathbb{R}^{m \times m}$ and $R \in \mathbb{R}^{n \times n}$ are positive definite weighting matrices appearing in the quadratic cost. The process is driven by a noise sequence $w_t \in \mathbb{R}^k, t=0, \ldots, T-1$, where $\{w_t\}_{t=0}^{T-1}$ are i.i.d., have zero-mean, and are independent of $x_0$. Moreover, the noise has finite second moment, that is, $\mathbb{E}\left[w_t w_t^{\top}\right]=W$, with$\operatorname{Tr}(W)<\infty$ for $t=0,1, \ldots, T-1$.

We now apply the dynamic programming algorithm to define a sequence of positive numbers $\{P_t^*\}_{t=0}^T$ as the solution to the dynamic ARE: 
\begin{equation}
P_{t}^{*} = A_{t}^{\top}P_{t+1}^{*}A_{t} - A_{t}^{\top}P_{t+1}^{*} B_{t}(B_{t}^{\top } P_{t+1}^{*} B_{t} + R_{t} )^{-1}B_{t}^{\top}P_{t+1}^{*}A_{t}  + Q_{t},
\label{pstar}
\end{equation}
with terminal condition $P_T^{*}=Q_T$. The number $\left\{P_t^{*}\right\}_{t=0}^{T}$ can be found by solving the ARE iteratively backwards in time. \\ 

\noindent The optimal control sequence $\{u_t^*\}_{t=0}^{T-1}$ is given by
\begin{align} \label{5}
u_t^{*} &= -K_t^{*}t  \ \ \ \ \ where \\ 
K_t^* &= (B_{t}^{\top } P_{t+1}^{*} B_{t} + R_{t} )^{-1}
B_{t}^{\top}P_{t+1}^{*}A_{t}. \label{6}
\end{align}
To identify the optimal solution in the linear feedback form (\ref{5}), it is essential to concentrate on the specific class of linear \textit{admissible policies} in feedback form
\begin{equation}
u_t=-K_t x_t, \quad t=0,1, \cdots, T-1.
\end{equation}

\subsection{LQ Problem with Constraints}
When $x$ is a scalar state, we can take into account the following set of general control constraints then find the optimal solution for this case, which is a piece-wise linear function, referred from \cite{wu2018explicit}:
\begin{equation}
\mathcal{C}_t\left(x_t\right)=\left\{u_t \in \mathbb{R}^n\left|H_t u_t \leq d_t\right| x_t \right| \},
\label{constr}
\end{equation}
where $H_t \in \mathbb{R}^{s\times n}$ and $d_t \in \mathbb{R}^s$ are given matrices and vectors. In order to ensure the feasibility, we impose the following standing assumption: for each $t=0, \cdots T-1$, the set $\left\{K \in \mathbb{R}^{n} \mid H_t K \leq d_t\right.$, $\left. H_t K \leq \mathbf{0}_{s \times 1}\right\}$ is nonempty.  The parametrization in \eqref{constr} is flexible and can capture a variety of practically relevant constraints. A few representative examples are:
\begin{itemize}
\item[(i)] Non-negativity/non-positivity. If we choose $H_t=-\mathbf{I}_n$ and $d_t=\mathbf{0}_{n \times 1}$, then (\ref{constr}) enforces $u_t \geq \mathbf{0}_{n \times 1}$(componentwise). Similarly, taking $H_t=\mathbf{I}_n$ and $d_t=\mathbf{0}_{n \times 1}$ yields $u_t \leq \mathbf{0}_{n \times 1}$.
\item[(ii)] State-dependent box constraints. Suppose we are given vectors $\underline{d}_t, \overline{d}_t \in \mathbb{R}^{n}$. The requirement $\underline{d}_t\left|x_t\right| \leq u_t \leq \overline{d}_t\left|x_t\right|$ can be represented by setting $H_t=\begin{bmatrix}\mathbf{I}_n \\ -\mathbf{I}_n\end{bmatrix}$ and $d_t=\begin{bmatrix}\overline{d}_t\\ -\underline{d}_t\end{bmatrix}$.
\item[(iii)] Cone-type constraints. A general conic constraint of the form $H_t u_t \geq \textbf{0}_{s \times 1}$, for a prescribed matrix $H_t$, is also encompassed by this framework..
\item[(iv)] Unconstrained controls. The unconstrained case $u_t \in \mathbb{R}^n$, is recovered by taking $H_t=\mathbf{0}_{s \times n}$ and $d_t=\mathbf{0}_{s \times 1}$, so that \eqref{constr} imposes no restriction on $u_t$.
\end{itemize}
Before deriving the explicit solution to the problem, it is convenient to introduce the following state-independent set associated with $\mathcal{C}_t\left(x_t\right)$: $\mathcal{K}_t=\left\{K \in \mathbb{R}^n \mid H_t K \leq d_t\right\}$. Additionally, three auxiliary optimization problems are introduced for the interval as follows,
\begin{equation}
\begin{aligned}
&\min _{u \in \mathcal{C}_t(\xi)} p_t(u, \xi, y, z),\\
&\min _{K \in \mathcal{K}_t} \hat{p}_t(K, y, z),\\
&\min _{K \in \mathcal{K}_t} \bar{p}_t(K, y, z),
\end{aligned}
\end{equation}
where $\xi \in \mathbb{R}$ is $\mathcal{F}_t$-measurable random variable, and $y, z \in \mathbb{R}_{+}$ are $\mathcal{F}_{t+1}$-measurable random variables. The corresponding objective functions are defined by
\begin{equation}
\begin{aligned}
p_t(u, \xi, y, z)&=  \mathbb{E}_t \Bigl[ \xi^{2 } Q_{t} +u^{\top } R_{t} u+\left(A_{t}\xi + B_{t}u + D_{t}w_t\right)^{2}  \\
& \quad  \times\left(y \mathbf{1}_{\left\{A_{t}\xi + B_{t}u + D_{t}w_t \geq 0\right\}} + z \mathbf{1}_{\left\{A_{t}\xi + B_{t}u + D_{t}w_t< 0 \right\}}\right) \Bigr],
\end{aligned}
\end{equation}
\begin{equation}
\begin{aligned}
\hat{p}_t(K, y, z) &=  \mathbb{E}_t\Bigl[Q_{t} +K^{\top } R_{t} K + \left(A_{t} + B_{t}K +D_{t}w_t\right)^{2} \\
& \quad \times\left(y \mathbf{1}_{\left\{A_{t} + B_{t}K + D_{t}w_t \geq 0\right\}}+z \mathbf{1}_{\left\{A_{t} + B_{t}K + D_{t}w_t< 0 \right\}}\right)\Bigr], 
\end{aligned}
\label{hatp}
\end{equation}
\begin{equation}
\begin{aligned}
\bar{p}_t(K, y, z)&=  \mathbb{E}_t\Bigl[Q_{t} +K^{\top } R_{t} K + \left(A_{t} - B_{t}K - D_{t}w_t\right)^{2} \\
& \quad \times\left(y \mathbf{1}_{\left\{A_{t} - B_{t}K - D_{t}w_t \leq 0\right\}}+z \mathbf{1}_{\left\{A_{t} - B_{t}K - D_{t}w_t> 0 \right\}}\right)\Bigr]. \\
\end{aligned}
\label{barp}
\end{equation}
An explicit solution for the problem can be developed. We introduce two random sequences, $\hat{P}_0, \hat{P}_1, \cdots, \hat{P}_T$ and $\bar{P}_0, \bar{P}_1, \cdots, \bar{P}_T$, defined recursively in reverse as follows:
\begin{equation}
\hat{P}_t=\min _{K_t \in \mathcal{K}_t} \hat{p}_t\left(K_t, \hat{P}_{t+1}, \bar{P}_{t+1}\right),
\label{hatP}
\end{equation}
\begin{equation}
\bar{P}_t=\min _{K_t \in \mathcal{K}_t} \bar{p}_t\left(K_t, \hat{P}_{t+1}, \bar{P}_{t+1}\right),
\label{barP}
\end{equation}
where $\hat{p}_t(\cdots)$ and $\bar{p}_t(\cdots)$ are defined, respectively, in (\ref{hatp}) and (\ref{barp}) for $t=T-1, \cdots, 0$ with the boundary conditions of $\hat{P}_T$ $=\bar{P}_T=Q_T$. It is evident that $\hat{P}_t$ and $\bar{P}_t$ are $\mathcal{F}_t$-measurable random variables. The optimal control policy for the problem at time $t$ is characterized by a linear feedback policy,
\begin{equation}
u_t^*\left(x_t\right)=\left\{\begin{array}{ll}
\hat{K}_t^* x_t & \mbox { if } x_t \geq 0, \\
-\bar{K}_t^* x_t & \mbox { if } x_t<0
\end{array},\right.
\label{real-k-star}
\end{equation}
where $\hat{K}_t^*$ and $\bar{K}_t^*$ are defined as
$$
\begin{aligned}
& \hat{K}_t^*=\arg \min _{K_t \in \mathcal{K}_t} \hat{p}_t\left(K_t, \hat{P}_{t+1}, \bar{P}_{t+1}\right), \\
& \bar{K}_t^*=\arg \min _{K_t \in \mathcal{K}_t} \bar{p}_t\left(K_t, \hat{P}_{t+1}, \bar{P}_{t+1}\right),
\end{aligned}
$$
where $\{\hat{P}_t\}|_{t=0} ^T$ and $\{\bar{P}_t\}|_{t=0} ^T$ are given in (\ref{hatP}) and (\ref{barP}), respectively, and $\hat{P}_t>0$ and $\bar{P}_t>0$ for $t=0, \cdots, T-1$. Furthermore, the optimal objective value of the problem is
\begin{equation}
x_0^2\left(\hat{P}_0 \mathbf{1}_{\left\{x_0 \geq 0\right\}}+\bar{P}_0 \mathbf{1}_{\left\{x_0< 0 \right\}}\right).
\end{equation}
The above theory suggests that the systems represented by (\ref{hatP}) and (\ref{barP}) serve a function analogous to that of the ARE in the classical LQ optimal problem. Moreover, in the absence of control constraints, these two equations will converge into the single equation presented in (\ref{pstar}) later on. 

\subsection{LQ Problem with Unknown Noise Distribution} 
Consider the stochastic program
\begin{equation}
J^* = \min _{\pi \in \Pi} \mathbb{E}_{\mathbb{P}}\left[J_\pi\left(x,u| w\right)\right],
\label{noise_main}
\end{equation}
where $\mathbb{P}$ is the noise distribution supported on $\Omega$.In applications, the distribution $\mathbb{P}$ cannot be accessed explicitly. Instead Instead, we assume that we observe a finite number $M$ i.i.d.\ samples from $\mathbb{P}$ and collect them in the training set $\widehat{\Omega}_M=\{\widehat w_i\}_{i=1}^M\subseteq\Omega$. Before it is realized, this dataset can be regarded as a random element in $\Omega^M$ with distribution $\mathbb{P}^M$.

A data-based control decision $\widehat u_M$ iis any feasible control rule constructed from the information contained in $\widehat{\Omega}_M$. Its “true performance is measured by the expected cost $\mathbb{E}_{\mathbb{P}}\left[J_\pi\left(x, \widehat{u}_M | w\right)\right]$, that is, the expectation with respect to a fresh noise draw $w$, independent of the training set. Since $\mathbb{P}$ is unknown, we aim to derive high-confidence upper bounds on this out-of-sample cost. Concretely, for a random bound $\widehat J_M$ and a prescribed significance level $\beta$, we would like to ensure that
\begin{equation}
\mathbb{P}^M\left\{\widehat{\Omega}_M: \mathbb{E}_{\mathbb{P}}\left[J_\pi\left(x, \widehat{u}_M | w\right)\right] \leq \widehat{J}_M\right\} \geq 1-\beta.
\label{perf-gua}
\end{equation}
Here $\widehat{J}_M$ plays the role of a performance certificate, while the probability on the left-hand side quantifies its reliability.

As it is impossible to evaluate the true out-of-sample performance, we instead seek a data-driven solution with a low certificate and high reliability $\widehat{J}_M$, which are small and reliable in the above sense. A natural first step is to approximate $\mathbb{P}$ by the empirical measure with sample data $\widehat{w}_{t,1}, \cdots, \widehat{w}_{t,M}$ from $\mathbb{P}$:
\begin{equation}
\widehat{\mathbb{P}}_M=\frac{1}{M} \sum_{i=1}^M \delta_{\widehat{w}_{t.i}},
\label{w_sample}
\end{equation}
where $\delta_{\widehat w_i}$ is the Dirac mass at $\widehat w_i$. If one were to treat $\widehat{\mathbb{P}}_M$.as the true distribution in the controller design, the resulting policy could be highly sensitive to sampling noise and may perform poorly when $\mathbb{P}$ differs from $\widehat{\mathbb{P}}_M$. To hedge such misspecification, , we adopt the distributionally robust counterpart
\begin{equation}
\widehat{J}_M =  \min _{\pi \in \Pi} \max_{\mathbb{Q} \in \widehat{\mathcal{P}}_M}  \mathbb{E}_{\mathbb{Q}}\left[ J_\pi\left(x, u | w\right)\right]
\label{DRO}
\end{equation}
where $\widehat{\mathcal{P}}_M$ denotes an ambiguity set of probability measures that are deemed plausible given the observed samples. In this paper, we construct $\widehat{\mathcal{P}}_M$ as a ball centered at the empirical distribution $\widehat{\mathbb{P}}_M$, measured in the Wasserstein metric. We will show that the optimal value $\widehat{\mathcal{P}}_M$ and any optimal data-driven control $\widehat{u}_M$ solving this distributionally robust problem enjoy two desirable properties: a finite-sample performance guarantee (Theorem \ref{finite-sample}) and asymptotic consistency as the sample size $M$ tends to infinity (Theorem \ref{asy-conv}).

\subsubsection{Wasserstein Metric and Ambiguity Set}
Let $\mathcal{M}(\Omega)$ denote the set of all probability measures $\mathbb{Q}$ on $\Omega$ such that $\mathbb{E}_{\mathbb{Q}}[\|w\|] = \int_{\Omega}\|w\| \mathbb{Q}(d w)<\infty$.
\begin{definition}[Wasserstein metric] 
For $\mathbb{Q}_1, \mathbb{Q}_2 \in \mathcal{M}(\Omega)$, the 2-Wasserstein metric $W_2\left(\mathbb{Q}_1, \mathbb{Q}_2\right)$ is defined as
$$
W_2\left(\mathbb{Q}_1, \mathbb{Q}_2\right)= \left(\inf \int_{\Omega \times \Omega}(\left\|w_1-w_2\right\|)^2 \Pi_w \left(d w_1, d w_2\right)\right)^{1/2},
$$
where the infimum is taken over all joint distributions $\Pi_w$ of $w_1$ and $w_2$ whose marginals are $\mathbb{Q}_1$ and $\mathbb{Q}_2$, respectively.
\label{def_w}
\end{definition}
More generally, one can define the $p$-Wasserstein metric ($p\ge 1$) by using the cost $\|w_1-w_2\|^p$. In what follows we restrict attention to $p=2$  To quantify ambiguity set around the empirical law $\widehat{\mathbb{P}}_M$, we work with the $W_2$–neighborhood
$$
\mathbb{B}_{\varepsilon}(\widehat{\mathbb{P}}_M) =\left\{\mathbb{Q} \in \mathcal{M}(\Omega): W_2(\widehat{\mathbb{P}}_M, \mathbb{Q}) \leq \varepsilon_r\right\}.
$$
Given a confidence parameter $\beta\in(0,1)$, we calibrate with the radius $\varepsilon_r$ so that
$$
\mathbb{P}^M\!\left\{\,\mathbb{P}\in \mathbb{B}_{\varepsilon_r}(\widehat{\mathbb{P}}_M)\,\right\}\ \ge\ 1-\beta.
$$
In words, $\mathbb{B}_{\varepsilon_r}(\widehat{\mathbb{P}}_M)$ is a $W_2$–ball centered at the empirical measure which, with probability at least $1-\beta$ over the draw of the $M$ samples, contains the unknown data distribution. In particular, for any confidence $1-\beta>0$, the goal is to choose an ambiguity ball radius $\varepsilon_r$ so that the true distribution in this ball with confidence $1-\beta$. \\

We now impose a mild regularity assumption on the noise distribution.
\begin{assumption}[Light-tailed distribution]
There exists an exponent $a>1$ such that
$$
\mathbb{E}_{\mathbb{P}}\left[\exp \left(\|w\|^a\right)\right]=\int_{\Omega} \exp \left(\|w\|^a\right) \mathbb{P}(d w)<\infty.
$$
\label{light-tail}
\end{assumption}
Assumption \ref{light-tail} essentially requires the tail of the distribution $\mathbb{P}$ to decay at an exponential rate. This ensures that standard measure concentration inequalities apply, allowing us to control the deviation of empirical quantities from their expectations with high probability.

\begin{theorem}[Measure concentration]
If Assumption \ref{light-tail} holds, we have
$$
\mathbb{P}^M\left\{W_2(\mathbb{P}, \widehat{\mathbb{P}}_M) \geq \varepsilon_r\right\} \leq \begin{cases}c_1 \exp \left(-c_2 M \varepsilon_r^{\max \{m, 2\}}\right) & \text { if } \varepsilon_r \leq 1 \\ c_1 \exp \left(-c_2 M \varepsilon_r^a\right) & \text { if } \varepsilon_r>1\end{cases}
$$
for all $M \geq 1$, $m \neq 2$, and $\varepsilon>0$, where $c_1$, $c_2$ are positive constants while $c_2$ depends on the $p$-norm. 
\label{m-con}
\end{theorem}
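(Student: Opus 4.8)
The plan is to recognize Theorem \ref{m-con} as a restatement of the classical measure-concentration result of Fournier and Guillin for the empirical Wasserstein distance, so the proof should proceed by invoking that result and checking that Assumption \ref{light-tail} supplies exactly the hypotheses it requires. First I would fix the ambient dimension: here the noise $w_t$ lives in $\mathbb{R}^k$, and the empirical measure $\widehat{\mathbb{P}}_M$ is built from $M$ i.i.d.\ copies of $w_t$, so the relevant Fournier--Guillin rate is governed by $\max\{k,2\}$ (the statement writes $m$, which should be read as the noise dimension, with the case $m=2$ excluded because of the logarithmic correction that appears there). Second, I would note that Assumption \ref{light-tail} asserts finiteness of $\mathbb{E}_{\mathbb P}[\exp(\|w\|^a)]$ for some $a>1$; this is precisely the light-tail moment condition under which Fournier--Guillin give a two-regime exponential bound, namely $c_1\exp(-c_2 M\varepsilon^{\max\{k,2\}})$ for small $\varepsilon$ and $c_1\exp(-c_2 M\varepsilon^{a})$ for large $\varepsilon$, with constants $c_1,c_2$ depending only on $a$, the exponential-moment bound, and the dimension and norm.

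The key steps, in order, would be: (i) recall that $W_2(\mathbb P,\widehat{\mathbb P}_M)^2 = W_{2,\mathrm{sq}}$ can be related to the $1$-Wasserstein distance of the pushforwards under $w\mapsto$ appropriate transport, but more directly that Fournier--Guillin already treat $W_p$ for general $p\ge 1$, so one applies their theorem with $p=2$; (ii) translate their tail bound on $\mathbb{E}[W_2(\mathbb P,\widehat{\mathbb P}_M)]$ together with a concentration inequality for $W_2$ around its mean (again contained in the same reference) into the stated probability bound; (iii) absorb the dependence on the exponent $a$ and on the $\mathbb{E}_{\mathbb P}[\exp(\|w\|^a)]$ value into the constants $c_1,c_2$, noting that $c_2$ additionally depends on the choice of norm through the equivalence constants between $\ell_p$-norms on $\mathbb{R}^k$. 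Finally I would record the validity range ``for all $M\ge 1$, $m\neq 2$, $\varepsilon>0$'' as exactly the scope in which the cited estimate holds.

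The only genuine obstacle I anticipate is bookkeeping rather than mathematics: making sure the exponents line up. Specifically, one must be careful that it is the \emph{noise} dimension (called $m$ in the theorem but equal to $k$ in the model of \eqref{Xt+1}) that enters the exponent $\max\{\cdot,2\}$, and that the ``$\varepsilon_r\le 1$ versus $\varepsilon_r>1$'' dichotomy matches the ``short-range/long-range'' split in Fournier--Guillin, where the large-$\varepsilon$ regime is the one controlled by the exponential moment $a$ and the small-$\varepsilon$ regime by the dimensional rate. I would also double-check that the exclusion $m\neq 2$ is needed precisely because at $m=2$ the Fournier--Guillin bound carries an extra $\log$ factor; outside that case the clean two-branch form stated here is exactly theirs. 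Modulo these alignments, the proof is a direct citation, so I would keep it to a short paragraph that states the correspondence and defers the quantitative constants to the reference.
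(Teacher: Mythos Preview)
Your proposal is correct and matches the paper's own treatment: the paper does not give an independent proof but simply refers the reader to Fournier and Guillin \cite{fournier2015rate}, so recognizing Theorem~\ref{m-con} as a restatement of their $W_p$ concentration result and verifying that Assumption~\ref{light-tail} supplies the required exponential-moment hypothesis is exactly what is expected. Your remarks on the dimension bookkeeping and the $m\neq 2$ exclusion are accurate and would make the citation more precise than the paper itself.
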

The proofs of Theorem \ref{m-con} are referred from \cite{fournier2015rate}. 

By Theorem \ref{m-con}, we can estimate the radius of the smallest Wasserstein ball that contains $\mathbb{P}$ with confidence $1-\beta$ for some prescribed $\beta \in(0,1)$ as:  
$$
\varepsilon_M(\beta):= \begin{cases}\left(\frac{\log \left(c_1 \beta^{-1}\right)}{c_2 M}\right)^{1 / \max \{m, 2\}} & \text { if } M \geq \frac{\log \left(c_1 \beta^{-1}\right)}{c_2}, \\ \left(\frac{\log \left(c_1 \beta^{-1}\right)}{c_2 M}\right)^{1 / a} & \text { if } M<\frac{\log \left(c_1 \beta^{-1}\right)}{c_2} .\end{cases}
$$
Note that the Wasserstein ball with radius $\varepsilon_M(\beta)$ can thus be viewed as a confidence set for the unknown true distribution as in statistical testing \cite{mohajerin2018data, wainwright2019high}. We now connect the above construction to the distributionally robust control problem (\ref{DRO}) introduced earlier. 

\begin{theorem}[Finite sample guarantee]
Let $\beta \in(0,1)$ be fixed and define the ambiguity set $\widehat{\mathcal{P}}_M=\mathbb{B}_{\varepsilon_M(\beta)}(\widehat{\mathbb{P}}_M)$. Suppose Assumption \ref{light-tail} holds and let $\widehat{J}_M$ and $\widehat{u}_M$ denote, espectively, the optimal value and an optimal policy of the distributionally robust problem (\ref{DRO}). Then, the finite sample guarantee (\ref{perf-gua}) holds.
\label{finite-sample}
\end{theorem}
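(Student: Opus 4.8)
The plan is to follow the standard confidence-set argument for Wasserstein distributionally robust optimization. The finite-sample guarantee will drop out of two facts: (i) the calibrated radius $\varepsilon_M(\beta)$ makes $\widehat{\mathcal{P}}_M$ a $1-\beta$ confidence region for the true law $\mathbb{P}$, and (ii) whenever $\mathbb{P}$ happens to lie in the ambiguity set, the robust optimal value $\widehat J_M$ dominates the true out-of-sample cost of $\widehat u_M$ simply because $\mathbb{P}$ is then a feasible point of the inner maximization.

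First I would record the concentration step. Applying Theorem \ref{m-con} with $\varepsilon_r=\varepsilon_M(\beta)$ and unwinding the definition of $\varepsilon_M(\beta)$ — the two regimes $M\ge\log(c_1\beta^{-1})/c_2$ and $M<\log(c_1\beta^{-1})/c_2$ are chosen precisely so that $c_1\exp(-c_2 M\varepsilon_M(\beta)^{\max\{m,2\}})=\beta$ in the first case and $c_1\exp(-c_2 M\varepsilon_M(\beta)^{a})=\beta$ in the second — one obtains
\begin{equation}
\mathbb{P}^M\bigl\{\widehat{\Omega}_M : W_2(\mathbb{P},\widehat{\mathbb{P}}_M)\le\varepsilon_M(\beta)\bigr\}\ \ge\ 1-\beta,
\end{equation}
that is, $\mathbb{P}^M\{\mathbb{P}\in\widehat{\mathcal{P}}_M\}\ge 1-\beta$. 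At this point one should also dispatch the mild measurability prerequisites: $\widehat{\mathbb{P}}_M$ is a measurable function of $\widehat{\Omega}_M\in\Omega^M$, the map $\widehat{\Omega}_M\mapsto W_2(\mathbb{P},\widehat{\mathbb{P}}_M)$ is measurable, and a measurable minimizer $\widehat u_M$ of (\ref{DRO}) exists — the latter via lower semicontinuity of $\mathbb{Q}\mapsto\mathbb{E}_{\mathbb{Q}}[J_\pi]$ on the weakly compact Wasserstein ball together with a standard measurable-selection theorem.

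Second, I would carry out the deterministic domination step. Fix any realization $\widehat{\Omega}_M$ in the event $\{\mathbb{P}\in\widehat{\mathcal{P}}_M\}$. Since $\widehat u_M$ attains the outer minimum in (\ref{DRO}),
\begin{equation}
\widehat J_M=\max_{\mathbb{Q}\in\widehat{\mathcal{P}}_M}\mathbb{E}_{\mathbb{Q}}\bigl[J_\pi(x,\widehat u_M\mid w)\bigr]\ \ge\ \mathbb{E}_{\mathbb{P}}\bigl[J_\pi(x,\widehat u_M\mid w)\bigr],
\end{equation}
the inequality holding because $\mathbb{P}$ is feasible for the inner maximization. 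The light-tailed Assumption \ref{light-tail}, together with the quadratic form of $J_\pi$ and $\operatorname{Tr}(W)<\infty$, guarantees all these expectations are finite, so this is an inequality between real numbers. Hence the event $\{\mathbb{P}\in\widehat{\mathcal{P}}_M\}$ is contained in $\{\mathbb{E}_{\mathbb{P}}[J_\pi(x,\widehat u_M\mid w)]\le\widehat J_M\}$, and combining with the first step yields $\mathbb{P}^M\{\mathbb{E}_{\mathbb{P}}[J_\pi(x,\widehat u_M\mid w)]\le\widehat J_M\}\ge 1-\beta$, which is exactly (\ref{perf-gua}).

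The only genuinely delicate point is the existence and measurability issue buried in the first step: one must know that $\widehat J_M$ and $\widehat u_M$ are well-defined, measurable functions of the sample so that the outer probability statement even makes sense. Everything else is bookkeeping around the calibration of $\varepsilon_M(\beta)$ and the trivial observation that a maximum over a set dominates any member of the set. If one prefers to avoid asserting an exact minimizer, the same chain of inequalities goes through verbatim for any $\epsilon$-optimal data-driven policy, with $\widehat J_M$ replaced by its attained value and at the cost of an additive $\epsilon$ on the right-hand side of (\ref{perf-gua}).
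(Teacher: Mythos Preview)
Your proposal is correct and follows exactly the standard confidence-set argument that the paper invokes by citing \cite{mohajerin2018data} in lieu of giving its own proof: calibrate $\varepsilon_M(\beta)$ via Theorem~\ref{m-con} so that $\mathbb{P}\in\widehat{\mathcal{P}}_M$ with probability at least $1-\beta$, then observe that on this event the inner maximum in (\ref{DRO}) dominates $\mathbb{E}_{\mathbb{P}}[J_\pi(x,\widehat u_M\mid w)]$. Your added remarks on measurability and the $\epsilon$-optimal variant are sound refinements but not part of the paper's (deferred) argument.
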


We finally discuss the asymptotic behavior as the number of samples grows.
\begin{theorem}[Asymptotic consistency] 
Let $\beta_M$ be a sequence in $(0,1)$ such that $\sum_{M=1}^{\infty} \beta_M<\infty$ and $\lim _{M \rightarrow \infty} \varepsilon_M\left(\beta_M\right)=0$ for $M \in \mathbb{N}$. For each $M$, define the ambiguity set $\widehat{\mathcal{P}}_M=\mathbb{B}_{\varepsilon_M\left(\beta_M\right)}(\widehat{\mathbb{P}}_M)$ and let $\widehat{J}_M$ and $\widehat{u}_M$ represent the optimal value and an optimal policy of the problem (\ref{DRO}). Under Assumption \ref{light-tail}, one can show that $\widehat{J}_M$ goes down to $J^*$ as $M$ goes to $\infty$ where $J^*$ is the optimal value of (\ref{noise_main}).
\label{asy-conv}
\end{theorem}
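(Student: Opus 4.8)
The plan is to establish $\widehat J_M\to J^*$ $\mathbb{P}^\infty$-almost surely by sandwiching, i.e.\ proving $\liminf_{M\to\infty}\widehat J_M\ge J^*$ and $\limsup_{M\to\infty}\widehat J_M\le J^*$ almost surely over the draw of the sample sequence.

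First I would extract two almost-sure facts from a single Borel--Cantelli argument. By the calibration of $\varepsilon_M(\beta_M)$ and Theorem \ref{m-con} (valid under Assumption \ref{light-tail}), $\mathbb{P}^M\{W_2(\mathbb{P},\widehat{\mathbb{P}}_M)\ge\varepsilon_M(\beta_M)\}\le\beta_M$; since $\sum_M\beta_M<\infty$, Borel--Cantelli gives that, $\mathbb{P}^\infty$-a.s., for all large $M$ one has $W_2(\mathbb{P},\widehat{\mathbb{P}}_M)<\varepsilon_M(\beta_M)$, hence $\mathbb{P}\in\widehat{\mathcal{P}}_M$; and since $\varepsilon_M(\beta_M)\to0$ by hypothesis, the same event gives $W_2(\widehat{\mathbb{P}}_M,\mathbb{P})\to0$ a.s. On this event, for every $\pi\in\Pi$ and all large $M$ we have $\max_{\mathbb{Q}\in\widehat{\mathcal{P}}_M}\mathbb{E}_{\mathbb{Q}}[J_\pi(x,u\mid w)]\ge\mathbb{E}_{\mathbb{P}}[J_\pi(x,u\mid w)]$; taking the infimum over $\pi$ on both sides yields $\widehat J_M\ge J^*$ for all large $M$, so $\liminf_M\widehat J_M\ge J^*$.

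For the upper bound, fix $\delta>0$ and pick a $\delta$-optimal policy $\pi_\delta\in\Pi$ for \eqref{noise_main}, which in this LQ setting may be taken affine/observation-linear, so that $\mathbb{E}_{\mathbb{P}}[J_{\pi_\delta}(x,u\mid w)]\le J^*+\delta$. Evaluating the outer minimum in \eqref{DRO} at $\pi_\delta$ gives $\widehat J_M\le\max_{\mathbb{Q}\in\widehat{\mathcal{P}}_M}\mathbb{E}_{\mathbb{Q}}[J_{\pi_\delta}(x,u\mid w)]$. Choose a near-maximizer $\mathbb{Q}_M\in\widehat{\mathcal{P}}_M$ with $\mathbb{E}_{\mathbb{Q}_M}[J_{\pi_\delta}]\ge\max_{\mathbb{Q}\in\widehat{\mathcal{P}}_M}\mathbb{E}_{\mathbb{Q}}[J_{\pi_\delta}]-1/M$; then $W_2(\mathbb{Q}_M,\mathbb{P})\le W_2(\mathbb{Q}_M,\widehat{\mathbb{P}}_M)+W_2(\widehat{\mathbb{P}}_M,\mathbb{P})\le\varepsilon_M(\beta_M)+W_2(\widehat{\mathbb{P}}_M,\mathbb{P})\to0$ on the a.s.\ event above. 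Because under the affine policy $\pi_\delta$ the closed-loop state is affine in $w$ by \eqref{Xt+1} and the stage costs in \eqref{mainfunc} are quadratic, the map $w\mapsto J_{\pi_\delta}(x,u\mid w)$ (after integrating out $x_0\sim\mathcal D$, which has finite second moment) is a quadratic function of $w$, hence continuous with at most quadratic growth. Since $W_2$-convergence is equivalent to weak convergence together with convergence of second moments, integrals of continuous quadratic-growth functions pass to the limit, so $\mathbb{E}_{\mathbb{Q}_M}[J_{\pi_\delta}]\to\mathbb{E}_{\mathbb{P}}[J_{\pi_\delta}]$ and thus $\limsup_M\widehat J_M\le\mathbb{E}_{\mathbb{P}}[J_{\pi_\delta}]\le J^*+\delta$. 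Letting $\delta\downarrow0$ gives $\limsup_M\widehat J_M\le J^*$ a.s., which combined with the previous step yields $\widehat J_M\to J^*$ a.s.\ (hence also in probability), consistent with the fact that $\widehat J_M\ge J^*$ for all large $M$, i.e.\ convergence ``from above''.

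The crux is the $\limsup$ step: one must show that the worst-case expectation over the shrinking Wasserstein balls collapses onto the true expectation even though the worst-case measure $\mathbb{Q}_M$ moves with $M$. I handle this by the triangle-inequality reduction to $W_2(\mathbb{Q}_M,\mathbb{P})\to0$ together with the quadratic-growth continuity of $w\mapsto J_{\pi_\delta}(x,u\mid w)$ (its growth coefficient is fixed once $\pi_\delta$ is fixed). If one prefers to avoid invoking attainment of the inner maximum, the same conclusion follows from the Kantorovich-dual reformulation $\sup_{W_2(\mathbb{Q},\mathbb{P})\le\rho}\mathbb{E}_{\mathbb{Q}}[\phi]=\inf_{\lambda\ge0}\{\lambda\rho^2+\mathbb{E}_{\mathbb{P}}[\sup_{w'}(\phi(w')-\lambda\|w'-w\|^2)]\}$: for quadratic $\phi$, taking $\lambda\to\infty$ as $\rho\to0$ forces the inner supremum down to $\phi$ and the whole bound down to $\mathbb{E}_{\mathbb{P}}[\phi]$. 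Assumption \ref{light-tail} is used only to license Theorem \ref{m-con} and hence the Borel--Cantelli step; everything else needs only the finite-second-moment hypothesis already imposed on the noise.
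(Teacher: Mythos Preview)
Your proof is correct and follows essentially the approach of Mohajerin Esfahani \& Kuhn (2018), which is precisely the reference the paper cites in lieu of giving its own proof. The sandwich via Borel--Cantelli for the $\liminf$ bound (eventual containment $\mathbb{P}\in\widehat{\mathcal{P}}_M$) and a fixed $\delta$-optimal policy plus quadratic-growth continuity under $W_2$-convergence for the $\limsup$ bound is the standard argument there, with your LQ-specific observation that the cost is quadratic in $w$ for affine $\pi_\delta$ serving as the natural instantiation of their polynomial-growth hypothesis.
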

The proof of Theorem $\ref{finite-sample}$ and Theorem $\ref{asy-conv}$ is based on \cite{mohajerin2018data}. 

\subsubsection{Solve Minimax LQ Problem with Wasserstein Penalty}
When tackling the distributionally robust control problem (\ref{DRO}), it is useful to note that a direct dynamic programming approach over distributions is typically computationally prohibitive in high-dimensions, in contrast to Riccati-type recursions for classical LQ problems. Following the regularization approach in the literature \cite{kim2023distributional}, we instead modify the cost functional by introducing a Wasserstein penalty term.

For a given policy $\pi$, define the penalized cost
$$
J^W_\pi\left(x, u | w\right) = \mathbb{E}_\pi \left[ x_{T}^{\top } Q_{T} x_T + \sum _{t=0}^{T-1}\left( x_{t}^{\top } Q_{t} x_t  +u_{t}^{\top } R_{t} u_{t} - \lambda W_2 (\widehat{\mathbb{P}}_M, \mathbb{Q})^2 \right) \Bigg| x_0=x  \right],
$$
where $\lambda > 0$ is the penalty parameter. By varying $\lambda$, we can tune how conservative the resulting control policy is: larger values of $\lambda$ penalize distributions that are far from the empirical law more heavily. The corresponding penalized minimax problem is
\begin{equation}
\min _{\pi \in \Pi} \max_{\mathbb{Q} \in \widehat{\mathcal{P}}_M}  \mathbb{E}_{\mathbb{Q}}\left[ J_\pi\left(x, u | w\right)\right], 
\label{w-penalty}
\end{equation}
which we use as a tractable surrogate for the original DRO formulation (\ref{DRO}). Let $\pi^*$ denote an optimal policy of (\ref{w-penalty}) for a given $\lambda$, and define
$$
V(x; \lambda)=\inf _{\pi \in \Pi} \sup _{\mathbb{Q} \in \widehat{\mathcal{P}}_M} J^W_\pi\left(x, u | w\right)
$$
the optimal value of the penalized minimax problem. Then, for a suitable choice of $\lambda^*$, one can show a guaranteed-cost property of the form
$$
\sup_{\mathbb{Q} \in \widehat{\mathcal{P}}_M} \left[J^W_\pi\left(x, u | w\right) | \pi^*(\lambda^*) \right] \leq \lambda^* {\varepsilon_M}^2(\beta) + V(x; \lambda^*),
$$
where ${\varepsilon_M}(\beta)$ is the ambiguity radius defined earlier. To analyze this penalized problem over a finite horizon, we introduce the value function
$$
V_t(x) = \inf_{\pi \in \Pi}  \sup _{\mathbb{Q} \in \widehat{\mathcal{P}}_M} \mathbb{E}_{\mathbb{Q}} \left[\sum _{s=t}^{T-1}\left( x_{s}^{\top} Q_{s} x_s +u_{s}^{\top } R_{s} u_{s} - \lambda W_2 (\widehat{\mathbb{P}}_M, \mathbb{Q})^2 \right) + x_{T}^{\top } Q_{T}  x_t \Bigg| x_t=x  \right],
$$
which represents the minimal worst-case expected cost-to-go from stage $t$ onward, given $x_t = x$. By construction, the scalar value $V(x; \lambda)$ can be identified with $V_0(x; \lambda ) / T$. The dynamic programming principle then yields the recursion
$$
V_t(x)=x^{\top} Qx  +\inf_u \sup _{\mathbb{Q} \in \mathcal{M}(\Omega)} \left[u^{\top} R u - \lambda W_2 (\widehat{\mathbb{P}}_M, \mathbb{Q})^2 + \int_{\Omega} V_{t+1}(A_{t}x + B_{t}u + D_{t}w) \mathbb{Q} (dw)\right]
$$
for $t = 0, \cdots, T-1$, and $V_T(x) = x^{2}Q_T$. The inner maximization over probability measures $\mathbb{Q}$ is infinite-dimensional and thus not amenable to direct numerical treatment. To obtain a tractable reformulation, we invoke Kantorovich duality for the Wasserstein metric \cite{kim2023distributional, beiglbock2012duality}, and arrive at
\begin{equation}
V_t(x)=x^{\top} Q x+\inf_u  \left[u^{\top} R u + \frac{1}{M} \sum_{i=1}^M \sup_{w \in \Omega} \{V_{t+1}(A_{t}x + B_{t}u + D_{t}w) - \lambda \|\widehat{w}_{t,i} - w\|^2 \}\right],
\label{value-func}
\end{equation}
which replaces the maximization over measures by a finite sum of pointwise optimization problems over $w$. We denote the empirical mean and covariance of the noise at stage $t$ by
\begin{equation}
\bar{w}_t =\mathbb{E}_{\widehat{\mathbb{P}}_M}\left[w_t \right] \mbox{ and } \Sigma^w_t = \mathbb{E}_{\widehat{\mathbb{P}}_M}\left[w_t w_t^{\top}\right].
\label{mu-cov}
\end{equation}
In the subsequent Lemma \ref{optimal-minimax}, we derive an explicit solution to the minimax problem associated with (\ref{w-penalty}) under the following condition on the penalty parameter.
\begin{assumption}
The penalty parameter satisfies $\lambda>\bar{\lambda}_t$ for all $t \geq 1$, where $\bar{\lambda}_t$ is the maximum eigenvalue of $D_t^{\top} P_t D_t$.
\label{lambda-ass}
\end{assumption}
This ensures that the resulting quadratic forms are strictly concave in the disturbance variable, which will be crucial for solving the inner maximization in closed form.
\begin{lemma}
Suppose that
$$
V_{t+1}(x)=x^{\top} P_{t+1} x  + 2 r_{t+1}^{\top} x + q_{t+1}
$$
for some nonnegative $P_{t+1} \in \mathbb{R}^{m\times m}$, $r_{t+1} \in \mathbb{R}^m$ and $q_{t+1} \in \mathbb{R}$. We further assume that the penalty parameter satisfies $\lambda>\bar{\lambda}_{t+1}$, where $\bar{\lambda}_{t+1}$ is the maximum eigenvalue of $D_t^{\top} P_{t+1} D_t$. 
Let $\Phi(w)= V_{t+1}(A_{t}x + B_{t}u + D_{t}w) - \lambda \|\widehat{w}_{t,i} - w\|^2$ and find $\frac{\partial \Phi(w)}{\partial w} = 0$. Then, the inner maximization problem $\sup_{w \in \Omega} \Phi(w)$ in (\ref{value-func}) has a unique maximizer $w_{t}^*= (w_{t,1}^*, \cdots, w_{t,M}^*)$, defined as
\begin{equation}
w_{t,i}^*= \Delta'_t (D_t^{\top} P_{t+1}(A_t x+B_t u)+ D_t^{\top} r_{t+1}+\lambda \widehat{w}_{t,i} ) .
\label{optimal-w}
\end{equation}
where $\Delta'_t = \left(\lambda \mathbf{I}_k - D_t^{\top}P_{t+1}D_t\right)^{-1}$. Substituting (\ref{mu-cov}) and (\ref{optimal-w}) into (\ref{value-func}), we can obtain the optimal value function with dynamic programming principle
$$
\begin{aligned}
V_t(x) &= x^{2}\left(Q + A_t P_{t+1} \Delta_t A_t  \right) + 2x \left(A_t \Delta_t r_{t+1} + \lambda A_t P_{t+1} D_t \Delta'_t \bar{w}_t \right)\\
&\quad + \inf_u \left\{u^{\top} (R +  B_t^{\top} P_{t+1} \Delta_t B_t )u + 2u^{\top} \left[(B_t^{\top}P_{t+1} \Delta_t  A_t)x +  B_t^{\top} \Delta_t r_{t+1} + \lambda B_t^{\top}  P_{t+1} D_t  \Delta'_t \bar{w}_t \right] \right\} \\
&\quad + r_{t+1} D_t \Delta'_t (D_t^{\top} r_{t+1} + 2 \lambda \bar{w}_t) + \operatorname{Tr}\left( \lambda^2 \Delta'_t \Sigma^w_t - \lambda \Sigma^w_t \right) + q_{t+1},
\end{aligned}
$$
where $\Delta_t = \mathbf{I}_m+D_t\Delta'_t D_t^{\top}P_t$. One can find that the outer minimization problem in (\ref{value-func}) has a unique minimizer 
$$
u^* = -K^*_t x - L^*_t,
$$
where
$$
\begin{aligned} 
K^*_t &=(R +  B_t^{\top} P_{t+1} \Delta_t B_t )^{-1}B_t^{\top}P_{t+1} \Delta_t  A_t, \\ 
L^*_t &= (R +  B_t^{\top} P_{t+1} \Delta_t B_t )^{-1}(B_t^{\top} \Delta_t r_{t+1} + \lambda B_t^{\top}  P_{t+1} D_t  \Delta'_t \bar{w}_t ).
\end{aligned}
$$
Thus the explicit derivation with this linear structure yields the following Riccati equation:
$$
\begin{aligned}
P_t & = Q+A_t P_{t+1} \Delta_t A_t - A_t \Delta_t P_{t+1} B_t (R_t+B_t^{\top} P_{t+1} \Delta_t B_t)^{-1} B_t^{\top} P_{t+1} \Delta_t A_t\\ 
r_t & =  A_t \Delta_t r_{t+1} + \lambda A_t P_{t+1} D_t \Delta'_t \bar{w}_t - A_t \Delta_t P_{t+1} B_t (R_t+B_t^{\top} P_{t+1} \Delta_t B_t)^{-1} \\
&\quad \cdot (B_t^{\top} \Delta_t r_{t+1} + \lambda B_t^{\top}  P_{t+1} D_t  \Delta'_t \bar{w}_t ) \\ 
q_t & =q_{t+1} + r_{t+1} D_t \Delta'_t (D_t^{\top} r_{t+1} + 2 \lambda \bar{w}_t) + \operatorname{Tr}\left( \lambda^2 \Delta'_t \Sigma^w_t - \lambda \Sigma^w_t \right)\\
&\quad - (B_t^{\top} \Delta_t r_{t+1} + \lambda B_t^{\top}  P_{t+1} D_t  \Delta'_t \bar{w}_t )^{\top} (R +  B_t^{\top} P_{t+1} \Delta_t B_t )^{-1} (B_t^{\top} \Delta_t r_{t+1} + \lambda B_t^{\top}  P_{t+1} D_t  \Delta'_t \bar{w}_t )
\end{aligned}
$$
with the terminal conditions $P_T=Q_T$, $r_T=\mathbf{0}_{m\times 1}$, and $q_T=0$. 
\label{optimal-minimax}
\end{lemma}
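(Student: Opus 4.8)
The plan is to establish the entire display by a single backward induction step: assume $V_{t+1}(x)=x^\top P_{t+1}x+2r_{t+1}^\top x+q_{t+1}$ with $P_{t+1}\succeq 0$ and $\lambda>\bar\lambda_{t+1}$, and verify that the dynamic programming recursion (\ref{value-func}) reproduces the claimed inner maximizer (\ref{optimal-w}), the claimed minimizer $u^*=-K_t^*x-L_t^*$, the claimed form of $V_t$, and the claimed recursions for $P_t,r_t,q_t$. Combined with the terminal identity $V_T(x)=x^\top Q_Tx$, which gives $P_T=Q_T$, $r_T=\mathbf 0_{m\times1}$, $q_T=0$, this propagates the quadratic ansatz over the whole horizon. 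Throughout, write $\eta=A_tx+B_tu$, so that the integrand in (\ref{value-func}) is $V_{t+1}(\eta+D_tw)=(\eta+D_tw)^\top P_{t+1}(\eta+D_tw)+2r_{t+1}^\top(\eta+D_tw)+q_{t+1}$.

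\emph{Inner maximization.} Expanding $\Phi(w)=V_{t+1}(\eta+D_tw)-\lambda\|\widehat w_{t,i}-w\|^2$ shows it is quadratic in $w$ with Hessian $2(D_t^\top P_{t+1}D_t-\lambda\mathbf I_k)=-2(\Delta'_t)^{-1}$, which is negative definite under Assumption \ref{lambda-ass} (equivalently $\Delta'_t\succ0$). Hence $\Phi$ is strictly concave, and provided the stationary point lies in $\Omega$ (e.g.\ $\Omega=\mathbb R^k$, or an interiority argument) the equation $\partial\Phi(w)/\partial w=0$ yields the unique maximizer (\ref{optimal-w}). Substituting it back, I use that a strictly concave quadratic $w\mapsto w^\top Mw+2b^\top w+c$ attains maximal value $c-b^\top M^{-1}b$; here $M=-(\Delta'_t)^{-1}$, $b=D_t^\top P_{t+1}\eta+D_t^\top r_{t+1}+\lambda\widehat w_{t,i}$ and $c=V_{t+1}(\eta)-\lambda\|\widehat w_{t,i}\|^2$, so $\sup_w\Phi(w)=V_{t+1}(\eta)-\lambda\|\widehat w_{t,i}\|^2+b^\top\Delta'_tb$. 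Averaging over $i=1,\dots,M$ and invoking (\ref{mu-cov}) turns $\tfrac1M\sum_i\widehat w_{t,i}$ into $\bar w_t$, $\tfrac1M\sum_i\|\widehat w_{t,i}\|^2$ into $\operatorname{Tr}(\Sigma^w_t)$, and $\tfrac1M\sum_i\widehat w_{t,i}^\top\Delta'_t\widehat w_{t,i}$ into $\operatorname{Tr}(\Delta'_t\Sigma^w_t)$; expanding $b^\top\Delta'_tb$ in $\widehat w_{t,i}$ and using $P_{t+1}+P_{t+1}D_t\Delta'_tD_t^\top P_{t+1}=P_{t+1}\Delta_t$ with $\Delta_t=\mathbf I_m+D_t\Delta'_tD_t^\top P_{t+1}$ (so $P_{t+1}\Delta_t$ is symmetric since $\Delta'_t$ is, and $\succeq0$ since $\Delta'_t\succ0$) collapses the result to $\eta^\top P_{t+1}\Delta_t\eta$, plus a term linear in $\eta$ with coefficient built from $r_{t+1}$ and $\lambda P_{t+1}D_t\Delta'_t\bar w_t$, plus the constant $q_{t+1}+r_{t+1}^\top D_t\Delta'_t(D_t^\top r_{t+1}+2\lambda\bar w_t)+\operatorname{Tr}(\lambda^2\Delta'_t\Sigma^w_t-\lambda\Sigma^w_t)$.

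\emph{Outer minimization.} Substituting $\eta=A_tx+B_tu$ and separating the $u$-dependent part produces the quadratic $u^\top(R+B_t^\top P_{t+1}\Delta_tB_t)u+2u^\top[(B_t^\top P_{t+1}\Delta_tA_t)x+B_t^\top\Delta_tr_{t+1}+\lambda B_t^\top P_{t+1}D_t\Delta'_t\bar w_t]$ displayed in the lemma. Since $R\succ0$ and $P_{t+1}\Delta_t\succeq0$, the coefficient matrix is positive definite, so the minimum is attained at the unique stationary point $u^*=-K_t^*x-L_t^*$ with $K_t^*,L_t^*$ as stated, and the minimal value equals $-[\,\cdot\,]^\top(R+B_t^\top P_{t+1}\Delta_tB_t)^{-1}[\,\cdot\,]$ with $[\,\cdot\,]$ the bracket above. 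Adding the $x^\top Q_tx$ term and the $\eta$-quadratic, $\eta$-linear and constant pieces from the previous step, and sorting by degree in $x$, gives $V_t(x)=x^\top P_tx+2r_t^\top x+q_t$ with exactly the displayed recursions for $P_t$, $r_t$, $q_t$; since $Q_t\succ0$ and $P_{t+1}\Delta_t\succeq0$ the Riccati update keeps $P_t\succeq0$, which with Assumption \ref{lambda-ass} closes the induction.

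\emph{Main obstacle.} The conceptual content is light; the real work is bookkeeping — completing the square twice and faithfully carrying every cross term through the two substitutions. The one substantive point to verify carefully is that $P_{t+1}\Delta_t$ is symmetric and positive semidefinite (so that the inner problem is concave and the outer problem convex with the stated $\Delta_t$), together with the implicit claim that the stationary point of $\Phi$ is an honest maximizer over $\Omega$, i.e.\ that either $\Omega=\mathbb R^k$ or the stationary point is interior. A secondary point is that applying the lemma along the whole horizon needs $P_t\succeq0$ and $\lambda>\bar\lambda_t$ at every stage, which is precisely where Assumption \ref{lambda-ass} enters.
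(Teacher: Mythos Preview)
Your proposal is correct and follows essentially the same route as the paper: the lemma's own statement already sketches the argument (set $\partial\Phi/\partial w=0$ to obtain the concave-quadratic maximizer (\ref{optimal-w}), substitute back and average over the $M$ samples via (\ref{mu-cov}), then minimize the resulting convex quadratic in $u$ and read off the Riccati recursions), and you have simply fleshed out that outline with the square-completion bookkeeping and the positivity checks on $\Delta'_t$ and $R+B_t^\top P_{t+1}\Delta_t B_t$. The only additions you make beyond the paper are the explicit verification that $P_{t+1}\Delta_t$ is symmetric positive semidefinite and the remark about $\Omega=\mathbb R^k$ versus interiority, both of which are welcome but do not change the method.
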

Suppose that Assumption \ref{lambda-ass} holds. Then, the matrices $P_t$ are well-defined and the value function can be expressed as
$$
V_t({x})={x}^{\top} P_t x +2 r_t^{\top} {x}+q_t, \quad t=0, \ldots, T
$$
Furthermore, the regularized problem (\ref{w-penalty}) in the finite horizon case has a unique optimal policy, defined as
$$
\pi_t^*({x})=-K^*_t {x}-L^*_t, \quad t=0, \ldots, T-1.
$$

\section{Gradient Dynamics and Convergence Analysis}
In the gradient analysis we treat the noise contribution via its empirical moments $\bar{w}_t$ and $\Sigma_t^w$ as fixed (data-driven) quantities and perform the minimization over $u$ while deferring the maximization over $w$. By the envelope theorem, since these moments correspond to the unique maximizer of the inner problem, their dependence on the control $K$ does not affect the gradient. Consequently,  $\bar{w}_t$ and $\Sigma_t^w$ act as constants in the Bellman expansion and do not introduce implicit derivatives in the cost gradient.

We represent an affine controller $u_t=-K_t x_t-L_t$ as a single linear map by augmenting the state with a constant 1, fixing the policy convention explicitly to avoid sign errors:
$$
u_t=-\widehat{K}_t \widehat{x}_t, \quad \widehat{K}_t=\begin{bmatrix}
K_t & L_t
\end{bmatrix}, \quad \widehat{x}_t=\begin{bmatrix}
x_t \\
1
\end{bmatrix}.
$$
Embed the original number into augmented matrices:
$$
\widehat{A}_t=\begin{bmatrix}
A_t & \textbf{0}_{m\times1} \\
\textbf{0}_{1\times m} & 1
\end{bmatrix}, \quad \widehat{B}_t=\begin{bmatrix}
B_t \\
\textbf{0}_{1\times n}
\end{bmatrix}, \quad \widehat{D}_t=\begin{bmatrix}
D_t \\
\mathbf{0}_{1\times k}
\end{bmatrix}, \quad \widehat{Q}_t=\begin{bmatrix}
Q_t & \mathbf{0}_{m\times 1} \\
\mathbf{0}_{1\times m} & 0
\end{bmatrix}.
$$
Since an admissible policy can be fully characterized by $\widehat{K}$, the cost of a policy $\widehat{K}$ can be correspondingly defined as
\begin{equation}
C(\widehat{K})=\mathbb{E}\left[\sum_{t=0}^{T-1}\left(\widehat{x}_t^{\top} \widehat{Q}_t \widehat{x}_t+u_t^{\top} R_t u_t\right)+\widehat{x}_T^{\top} \widehat{Q}_T \widehat{x}_T\right],
\label{widehat_K}
\end{equation}
subject to the dynamics
$$
\widehat{x}_{t+1} = \widehat{A}_t\widehat{x}_t + \widehat{B}_tu_t+\widehat{D}_tw_t. 
$$
Recall that $\widehat{K}^*$ is the optimal policy for the problem
$$
\widehat{K}^*=\underset{\widehat{K}}{\arg \min } C(\widehat{K}).
$$
\begin{assumption}[Initial State and Noise Process]
We make the following assumptions:
\begin{itemize}
    \item[1.] \textbf{Initial state:} The initial state satisfies $x_0 \sim \mathcal{D}$, where $\mathbb{E}\!\left[\widehat{x}_0\widehat{x}_0^{\top}\right]$ is positive definite. 
    \item[2.] \textbf{Noise process:} The sequence $\{w_t\}_{t=0}^{T-1}$ consists of independent and identically distributed (i.i.d.) random vectors that are independent of $x_0$. Their empirical mean and covariance are denoted by 
    \[
    \bar{w}_t = \mathbb{E}_{\widehat{\mathbb{P}}_M}[w_t], 
    \quad 
    \Sigma_t^w = \mathbb{E}_{\widehat{\mathbb{P}}_M}[w_t w_t^{\top}],
    \]
    where $\Sigma_t^w \succ 0$ for all $t=0,1,\dots,T-1$.
\end{itemize}
\label{as:3.1}
\end{assumption}

\medskip

We next introduce several quantities that will be used in our analysis. Specifically, we define $\underline{\sigma}_{X}$ as the uniform lower bound on the smallest singular values of the state covariance matrices $\mathbb{E}[\widehat{x}_t\widehat{x}_t^{\top}]$. Similarly, $\underline{\sigma}_{R}$ and $\underline{\sigma}_{Q}$ denote the smallest singular values of the control and state cost matrices, respectively:
\begin{subequations}
\begin{align}
\underline{\sigma}_{X} &= \min_{t} \; \sigma_{\min}\!\big(\mathbb{E}[\widehat{x}_t\widehat{x}_t^{\top}]\big), \label{sigmax} \\
\underline{\sigma}_{R} &= \min_{t} \; \sigma_{\min}(R_t), \label{sigmaR} \\
\underline{\sigma}_{Q} &= \min_{t} \; \sigma_{\min}(\widehat{Q}_t). \label{sigmaQ}
\end{align}
\end{subequations}
By Assumption~\ref{as:3.1}, it follows that $\underline{\sigma}_{R}>0$ and $\underline{\sigma}_{Q}>0$.

Finally, we introduce the notation $\mathcal{H}$ to denote the class of functions that are polynomial in the model parameters. When additional arguments are present, we write $\mathcal{H}(\cdot)$ to emphasize their dependence. The set of model parameters includes, but is not limited to, the system dimension $n$, the matrices $(\widehat{A}_t,\widehat{B}_t,\widehat{D}_t)$, the cost matrices $(\widehat{Q}_t,R_t)$, their associated norms and reciprocals, as well as the quantities $\underline{\sigma}_{X}$, $\underline{\sigma}_{R}$, $\underline{\sigma}_{Q}$, and $\mathbb{E}[\widehat{x}_0\widehat{x}_0^{\top}]$.

\subsection{Explicit Characterization of the Policy Gradient} 
We study an exact gradient descent scheme for computing the optimal feedback policy. The update rule is given by,
\begin{equation}
\widehat{K}_t^{n+1}=\widehat{K}_t^n-\eta \nabla_t C(\widehat{K}^n), \quad  \forall\ 0 \leq t \leq T-1,
\label{EGDM}
\end{equation}
where $n$ denotes the iteration index, $\eta>0$ is the step size, and $\nabla_t C(\widehat{K})=\frac{\partial C(\widehat{K})}{\partial \widehat{K}_t}$ is the gradient of the cost function with respect to $\widehat{K}_t$. We write $\nabla C(\widehat{K})=(\nabla_0 C(\widehat{K}), \cdots, \nabla_{T-1} C(\widehat{K}))$ for compactness. Let $\{\widehat{x}_t\}_{t=0}^T$ denote the trajectory under the current policy $\widehat{K}$. Define the state mean and covariance at time $t$ as 
\begin{equation}
\mu_t = \mathbb{E}\left[\widehat{x}_t^{\top}\right] \mbox{ and } \Sigma_t=\mathbb{E}\left[\widehat{x}_t \widehat{x}_t^{\top}\right], \quad t=0,1, \cdots, T,
\end{equation}
We further define the aggregated quantities over the horizon: a matrix $\Sigma_{\widehat{K}}$ as the sum of $\Sigma_t$,
\begin{equation}
\Sigma_{\widehat{K}}=\sum_{t=0}^T \Sigma_t=\mathbb{E}\left[\sum_{t=0}^T \widehat{x}_t \widehat{x}_t^{\top}\right],
\end{equation}
and a matrix $\mu_{\widehat{K}}$ as the sum of $\mu_t$,
\begin{equation}
\mu_{\widehat{K}}=\sum_{t=0}^T \mu_t=\mathbb{E}\left[\sum_{t=0}^T \widehat{x}_t^{\top}\right].
\end{equation}
In the finite time horizon setting, define $\widehat{P}_t^{\widehat{K}}$, $\widehat{r}_t^{\widehat{K}}$ and $\widehat{q}_t^{\widehat{K}}$ as the solution to the backward recursions
\begin{equation}
\begin{aligned}
\widehat{P}_t^{\widehat{K}} &=\widehat{Q}_t+\widehat{K}_t^{\top} R_t \widehat{K}_t+ (\widehat{A}_t-\widehat{B}_t \widehat{K}_t)^{\top} \widehat{P}_{t+1}^{\widehat{K}}  (\widehat{A}_t-\widehat{B}_t \widehat{K}_t), \\
\widehat{r}_t^{\widehat{K}} &= (\widehat{A}_t-\widehat{B}_t \widehat{K}_t)^{\top} \widehat{r}_{t+1}^{\widehat{K}}+ (\widehat{A}_t-\widehat{B}_t \widehat{K}_t)^{\top} \widehat{P}_{t+1}^{\widehat{K}} \widehat{D}_t \bar{w}_t, \\
\widehat{q}_t^{\widehat{K}} &= \widehat{q}_{t+1}^{\widehat{K}}+2 (\widehat{r}_{t+1}^{\widehat{K}})^{\top} \widehat{D}_t \bar{w}_t+\operatorname{Tr}(\widehat{D}_t^{\top} \widehat{P}_{t+1}^{\widehat{K}} \widehat{D}_t  \Sigma_t^w). \\
\end{aligned}
\label{Ptk}
\end{equation}
for $t=0,1, \cdots, T-1$ with terminal conditions
$$
\widehat{P}_T^{\widehat{K}} = \begin{bmatrix} P_T & \mathbf{0}_{m\times 1} \\ \mathbf{0}_{1\times m}& 0 \end{bmatrix}, \quad \widehat{r}_T^{\widehat{K}} = \begin{bmatrix} r_T \\ 0 \end{bmatrix}, \quad \widehat{q}_T^{\widehat{K}} = q_T,
$$
where $P_T = Q_T$, $r_T=\mathbf{0}_{m\times 1}$, and $q_T=0$. To simplify, we remove $\widehat{K}$-superscript when there is no confusion. Then the cost of $\widehat{K}$ can be rewritten as
$$
C(\widehat{K})=\mathbb{E}_{x_0 \sim \mathcal{D}}\left[\widehat{x}_0^{\top} \widehat{P}_0 \widehat{x}+ 2\widehat{r}_0^{\top}\widehat{x}_0 + \widehat{q}_0
\right].
$$
To see this,
$$
\begin{aligned}
&\quad \  \mathbb{E}\left[\widehat{x}_0^{\top} \widehat{P}_0 \widehat{x}+ 2\widehat{r}_0^{\top}\widehat{x}_0 + \widehat{q}_0 \right] \\ 
& =\mathbb{E}\left[\widehat{x}_0^{\top} \widehat{Q}_0 \widehat{x}_0 + \widehat{x}_0^{\top} \widehat{K}_0^{\top} R_0 \widehat{K}_0  \widehat{x} + \widehat{x}_0^{\top} (\widehat{A}_0-\widehat{B}_0 \widehat{K}_0)^{\top} \widehat{P}_1 (\widehat{A}_0-\widehat{B}_0 \widehat{K}_0) \widehat{x}_0 \right. \\
& \quad \left. + 2\widehat{r}_{1}^{\top}(\widehat{A}_t-\widehat{B}_t \widehat{K}_t)\widehat{x}_0 + 2 \bar{w}_t^{\top}\widehat{D}_t^{\top} \widehat{P}_1(\widehat{A}_t-\widehat{B}_t \widehat{K}_t)\widehat{x}_0   \right.  \\
&\quad \left. + \widehat{q}_{1}+2  \widehat{r}_{1}^{\top}  \widehat{D}_0 \bar{w}_0+ \operatorname{Tr}( \widehat{D}_0^{\top}  \widehat{P}_{1}  \widehat{D}_0 \Sigma_0^w)  \right] \\
& =\mathbb{E}\left[\widehat{x}_0^{\top} \widehat{Q}_0 \widehat{x}_0+u_0^{\top} R_0 u_0+\widehat{x}_1^{\top} \widehat{P}_1 \widehat{x}_1 + 2\widehat{r}_1^{\top}\widehat{x}_1 + \widehat{q}_1\right]\\
& =\mathbb{E}\left[\sum_{t=0}^{T-1}\left(\widehat{x}_t^{\top} \widehat{Q}_t \widehat{x}_t+u_t^{\top} R_t u_t\right)+\widehat{x}_T^{\top} Q_T \widehat{x}_T \right] .
\end{aligned}
$$

\begin{lemma}[Gradient Representation]
Define for each $t = 0,1,\dots, T-1$,
\begin{equation}
\begin{aligned}
E_{t}&= (R_t+\widehat{B}_{t}^{\top} \widehat{P}_{t+1} \widehat{B}_t) \widehat{K}_t-\widehat{B}_{t}^{\top} \widehat{P}_{t+1} \widehat{A}_t, \\
F_t &= \widehat{B}_t^{\top} \widehat{P}_{t+1} \widehat{D}_t \bar{w}_t+\widehat{B}_t^{\top} \widehat{r}_{t+1}.
\end{aligned}
\label{Et}
\end{equation}
Then we have the following representation of the gradient term with respect to the policy $\widehat{K}_t$, 
$$
\nabla_t C(\widehat{K}) =2 E_t \Sigma_t - 2F_t \mu_t.
$$
\label{le3.5}  
\end{lemma}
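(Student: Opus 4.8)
Let me sketch how I'd prove $\nabla_t C(\widehat K) = 2E_t\Sigma_t - 2F_t\mu_t$.

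The natural approach is a direct differentiation of the cost $C(\widehat K) = \mathbb{E}[\widehat x_0^\top \widehat P_0 \widehat x_0 + 2\widehat r_0^\top \widehat x_0 + \widehat q_0]$ with respect to a single block $\widehat K_t$, exploiting the backward recursions in display \eqref{Ptk}. The key structural observation is that $\widehat P_s, \widehat r_s, \widehat q_s$ for $s > t$ do not depend on $\widehat K_t$ (the recursions run backward, so earlier gains affect only earlier-indexed $\widehat P$), and likewise the state covariance $\Sigma_s = \mathbb{E}[\widehat x_s\widehat x_s^\top]$ and mean $\mu_s = \mathbb{E}[\widehat x_s^\top]$ for $s \le t$ do not depend on $\widehat K_t$ (the trajectory up to time $t$ is determined by $\widehat K_0,\dots,\widehat K_{t-1}$). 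So only the "one-step coupling" at stage $t$ carries the $\widehat K_t$-dependence that survives differentiation. I'd make this precise by writing the cost-to-go identity $\mathbb{E}[\widehat x_t^\top \widehat P_t \widehat x_t + 2\widehat r_t^\top \widehat x_t + \widehat q_t] = C(\widehat K)$ — already essentially verified in the excerpt's telescoping computation — and then differentiating the right-hand expansion of $\widehat P_t, \widehat r_t, \widehat q_t$ in terms of $\widehat P_{t+1}, \widehat r_{t+1}, \widehat q_{t+1}$ (which are $\widehat K_t$-free).

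Concretely, substitute the stage-$t$ recursions into $\mathbb{E}[\widehat x_t^\top\widehat P_t\widehat x_t + 2\widehat r_t^\top\widehat x_t + \widehat q_t]$ and collect the terms that depend on $\widehat K_t$:
\begin{align*}
\mathbb{E}\big[\widehat x_t^\top(\widehat K_t^\top R_t\widehat K_t)\widehat x_t\big] &+ \mathbb{E}\big[\widehat x_t^\top(\widehat A_t-\widehat B_t\widehat K_t)^\top \widehat P_{t+1}(\widehat A_t-\widehat B_t\widehat K_t)\widehat x_t\big]\\
&+ 2\,\mathbb{E}\big[\widehat r_{t+1}^\top(\widehat A_t-\widehat B_t\widehat K_t)\widehat x_t\big] + 2\,\mathbb{E}\big[\bar w_t^\top\widehat D_t^\top\widehat P_{t+1}(\widehat A_t-\widehat B_t\widehat K_t)\widehat x_t\big].
\end{align*}
Now use the matrix identities $\partial\,\mathrm{Tr}(\widehat K_t^\top R_t\widehat K_t\,\Sigma_t)/\partial\widehat K_t = 2R_t\widehat K_t\Sigma_t$ and $\partial\,\mathrm{Tr}\big((\widehat A_t-\widehat B_t\widehat K_t)^\top\widehat P_{t+1}(\widehat A_t-\widehat B_t\widehat K_t)\Sigma_t\big)/\partial\widehat K_t = -2\widehat B_t^\top\widehat P_{t+1}(\widehat A_t-\widehat B_t\widehat K_t)\Sigma_t$, together with $\partial\,\mathbb{E}[v^\top(\widehat A_t-\widehat B_t\widehat K_t)\widehat x_t]/\partial\widehat K_t = -\widehat B_t v\,\mu_t$ applied with $v=\widehat r_{t+1}$ and $v=\widehat P_{t+1}\widehat D_t\bar w_t$. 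Summing: the $\Sigma_t$-terms give $2\big(R_t\widehat K_t - \widehat B_t^\top\widehat P_{t+1}(\widehat A_t-\widehat B_t\widehat K_t)\big)\Sigma_t = 2\big((R_t+\widehat B_t^\top\widehat P_{t+1}\widehat B_t)\widehat K_t - \widehat B_t^\top\widehat P_{t+1}\widehat A_t\big)\Sigma_t = 2E_t\Sigma_t$, and the mean-terms give $-2\big(\widehat B_t^\top\widehat r_{t+1} + \widehat B_t^\top\widehat P_{t+1}\widehat D_t\bar w_t\big)\mu_t = -2F_t\mu_t$, which is the claim. One should note $\widehat q_t$ contributes nothing since it is independent of $\widehat K_t$ once $\widehat P_{t+1},\widehat r_{t+1}$ are.

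The main obstacle — and the point needing real care — is justifying that the "indirect" dependence vanishes: differentiating $C(\widehat K)$ really does reduce to differentiating only the explicit stage-$t$ block. This requires (a) the backward-independence of $\widehat P_s,\widehat r_s,\widehat q_s$ ($s>t$) on $\widehat K_t$, which is immediate from the recursion structure, and (b) the forward-independence of $\Sigma_s,\mu_s$ ($s\le t$) on $\widehat K_t$, from the dynamics $\widehat x_{s+1}=(\widehat A_s-\widehat B_s\widehat K_s)\widehat x_s+\widehat B_s(\text{affine part})+\widehat D_s w_s$. Given these two facts, the cost-to-go identity $C(\widehat K)=\mathbb{E}[\widehat x_t^\top\widehat P_t\widehat x_t+2\widehat r_t^\top\widehat x_t+\widehat q_t]$ lets us treat $\widehat P_{t+1},\widehat r_{t+1},\widehat q_{t+1}$ as constants and $\Sigma_t,\mu_t$ as constants while differentiating in $\widehat K_t$, so the computation above is exact rather than merely a partial derivative. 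A clean way to present this is to fix all blocks except $\widehat K_t$, write $C$ as a function of the scalar/matrix variable $\widehat K_t$ alone via the identity, and then apply the trace-gradient formulas termwise.
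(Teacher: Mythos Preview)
Your proposal is correct and follows essentially the same route as the paper: both use the cost-to-go identity $C(\widehat K)=\mathbb{E}[\widehat x_t^\top\widehat P_t\widehat x_t+2\widehat r_t^\top\widehat x_t+\widehat q_t]$, substitute the stage-$t$ recursion for $(\widehat P_t,\widehat r_t,\widehat q_t)$, and differentiate the explicit $\widehat K_t$-dependence, yielding the $2E_t\Sigma_t$ and $-2F_t\mu_t$ pieces. Your write-up is in fact more careful than the paper's in spelling out why the backward quantities $\widehat P_{t+1},\widehat r_{t+1},\widehat q_{t+1}$ and the forward moments $\Sigma_t,\mu_t$ are $\widehat K_t$-independent; one minor typo to fix is that $\partial\,\mathbb{E}[v^\top(\widehat A_t-\widehat B_t\widehat K_t)\widehat x_t]/\partial\widehat K_t = -\widehat B_t^{\top} v\,\mu_t$, not $-\widehat B_t v\,\mu_t$.
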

\begin{proof}
We illustrate the derivation for $t=0$ and the argument extends identically to all $t=0,1,\dots,T-1$. By definition,
\begin{align*}
\nabla_0 C(\widehat{K}) &= \frac{\partial C(\widehat{K})}{\partial \widehat{K}_0} \\
&= \mathbb{E}\left[2 R_0 \widehat{K}_0 \widehat{x}_0 \widehat{x}_0^{\top} - 2 \widehat{B}_{0}^{\top} \widehat{P}_1(\widehat{A}_0-\widehat{B}_0 \widehat{K}_0) \widehat{x}_0 \widehat{x}_0^{\top} - 2 \widehat{B}_0^{\top} \widehat{r}_1  \widehat{x}_0^{\top} - 2\widehat{B}_0^{\top} \widehat{P}_1^{\top}\widehat{D}_0 \bar{w}_0 \widehat{x}_0^{\top} \right] \\
&=2 E_0 \mathbb{E}\left[\widehat{x}_0 \widehat{x}_0^{\top}\right] - 2 F_0 \mathbb{E}\left[\widehat{x}_0^{\top}\right] \\
&=2 E_0 \Sigma_0 - 2F_0 \mu_0. 
\end{align*}
Similarly, $\forall t=0,1, \cdots, T-1$,
$$
\begin{aligned}
\nabla_t C(\widehat{K})&=2\left((R_t+\widehat{B}_{t}^{\top} \widehat{P}_{t+1} \widehat{B}_t) \widehat{K}_t-\widehat{B}_{t}^{\top} \widehat{P}_{t+1} \widehat{A}_t \right) \mathbb{E}\left[\widehat{x}_t \widehat{x}_t^{\top}\right]\\
& \quad - \left(\widehat{B}_t^{\top} \widehat{P}_{t+1} \widehat{D}_t \bar{w}_t+\widehat{B}_t^{\top} \widehat{r}_{t+1} \right) \mathbb{E}\left[ \widehat{x}_t^{\top} \right]\\
&=2 E_t \mathbb{E}\left[\widehat{x}_t\widehat{x}_t^{\top}\right] - 2F_t \mathbb{E}\left[\widehat{x}_t^{\top}\right] \\
&=2 E_t \Sigma_t - 2F_t\mu_t,
\end{aligned}
$$
where the expectation $\mathbb{E}$ is taken with respect to both initial distribution $x_0 \sim \mathcal{D}$ and noises $w$.
\end{proof}
In the following lemma, we establish the analytical form of the advantage function, which serves as the foundation for subsequent results. Building upon this, Lemma \ref{le3.6} shows that for any given policy $\widehat{K}$, the optimality gap $C(\widehat{K}) - C(\widehat{K}^*)$ is bounded by the sum of the magnitude of the gradient $\nabla_t C(\widehat{K})$ for $t=0,1, \cdots, T-1$. 

Let us start with a useful result for the value function. Define the value function $V_{\widehat{K}}(\widehat{x}, \tau)$ for $\tau=0,1, \cdots, T-1$, as
$$
V_{\widehat{K}}(\widehat{x}, \tau)=\mathbb{E}\left[\sum_{t=\tau}^{T-1}\left(\widehat{x}_t^{\top} \widehat{Q}_t \widehat{x}_t + u_t^{\top} R_t u_t\right)+\widehat{x}_T^{\top} \widehat{Q}_T \widehat{x}_t \bigg| \widehat{x}_\tau=x\right]=\widehat{x}^{\top} \widehat{P}_\tau \widehat{x}_t + 2\widehat{r}_{\tau}\widehat{x} + \widehat{q}_{\tau},
$$
with terminal condition
$$
V_{\widehat{K}}(\widehat{x}, T)=\widehat{x}^{\top} \widehat{Q}_T \widehat{x}.
$$
We then define the $Q$-function, $Q_{\widehat{K}}(\widehat{x}, u, \tau)$ for $\tau=0,1, \cdots, T-1$ as
$$
Q_{\widehat{K}}(\widehat{x}, u, \tau)=\widehat{x}^{\top} \widehat{Q}_\tau \widehat{x}_\tau +u^{\top} R_\tau u+\mathbb{E}_{w_\tau}\left[V_{\widehat{K}}\left(\widehat{A}_{\tau} \widehat{x}+ \widehat{B}_{\tau} u + \widehat{D}_\tau w_\tau, \tau+1\right)\right], 
$$
and the advantage function
$$
A_{\widehat{K}}(\widehat{x}, u, \tau)=Q_{\widehat{K}}(\widehat{x}, u, \tau)-V_{\widehat{K}}(\widehat{x}, \tau) .
$$
Note that $C(\widehat{K})=\mathbb{E}_{x_0 \sim \mathcal{D}}\left[V(\widehat{x}_0, 0)\right]$. Then we can write the difference of value functions between $\widehat{K}$ and $\widehat{K}'$ in terms of advantage functions in Lemma \ref{leC.1}. 

\begin{lemma}
Assume $\widehat{K}$ and $\widehat{K}'$ have finite costs. Denote $\left\{\widehat{x}'_t\right\}_{t=0}^T$ and $\left\{u'_t\right\}_{t=0}^{T-1}$ as the state and control sequences of a single trajectory generated by $\widehat{K}'$ starting from $\widehat{x}'_0=\widehat{x}_0=\widehat{x}$, then
\begin{equation}
V_{\widehat{K}'}(\widehat{x}, 0)-V_{\widehat{K}}(\widehat{x}, 0)=\mathbb{E}_{{w}}\left[\sum_{t=0}^{T-1} A_{\widehat{K}}\left(\widehat{x}'_t, u'_t, t\right)\right]
\end{equation}
and $A_{\widehat{K}}(\widehat{x},-\widehat{K}'_\tau \widehat{x}, \tau)=2 \widehat{x}^{\top}(\widehat{K}_\tau^{\prime}-\widehat{K}_\tau)^{\top} E_\tau \widehat{x} +\widehat{x}^{\top}(\widehat{K}_\tau^{\prime}-\widehat{K}_\tau)^{\top}(R_\tau+\widehat{B}_{\tau}^{\top} \widehat{P}_{\tau+1} \widehat{B}_{\tau})(\widehat{K}_\tau^{\prime}-\widehat{K}_\tau)\widehat{x}-2\widehat{x}^{\top}(\widehat{K}_\tau^{\prime}-\widehat{K}_\tau)^{\top}F_\tau$, where $E_\tau$ and $F_\tau $ are defined in (\ref{Et}).
\label{leC.1}
\end{lemma}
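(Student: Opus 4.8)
The plan is to prove the two identities separately, starting from the second (the advantage-function formula) because the telescoping identity will be assembled from it. For the advantage-function formula, I would expand $A_{\widehat K}(\widehat x, -\widehat K'_\tau \widehat x, \tau) = Q_{\widehat K}(\widehat x, -\widehat K'_\tau \widehat x, \tau) - V_{\widehat K}(\widehat x, \tau)$ using the closed forms $V_{\widehat K}(\widehat x, \tau) = \widehat x^{\top}\widehat P_\tau \widehat x + 2\widehat r_\tau^{\top}\widehat x + \widehat q_\tau$ and the definition of the $Q$-function. Substituting $u = -\widehat K'_\tau \widehat x$ into $Q_{\widehat K}$ gives
$$
\widehat x^{\top}\widehat Q_\tau \widehat x + \widehat x^{\top}(\widehat K'_\tau)^{\top} R_\tau \widehat K'_\tau \widehat x + \mathbb{E}_{w_\tau}\!\left[ V_{\widehat K}\!\big( (\widehat A_\tau - \widehat B_\tau \widehat K'_\tau)\widehat x + \widehat D_\tau w_\tau,\, \tau+1 \big) \right],
$$
and since $\mathbb{E}[w_\tau] = \bar w_\tau$ and $\mathbb{E}[w_\tau w_\tau^{\top}] = \Sigma^w_\tau$, the expectation expands into the quadratic, linear, and constant pieces of $V_{\widehat K}(\cdot, \tau+1)$ evaluated at $(\widehat A_\tau - \widehat B_\tau\widehat K'_\tau)\widehat x$ plus the noise contributions $2(\widehat r_{\tau+1})^{\top}\widehat D_\tau \bar w_\tau + \operatorname{Tr}(\widehat D_\tau^{\top}\widehat P_{\tau+1}\widehat D_\tau \Sigma^w_\tau)$. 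Then I would use the Riccati recursions (\ref{Ptk}) to recognize that the terms of $Q_{\widehat K}$ that do not involve the difference $\widehat K'_\tau - \widehat K_\tau$ reassemble exactly into $V_{\widehat K}(\widehat x, \tau)$; subtracting it leaves only the cross terms, which after collecting reduce to the claimed expression using the definitions of $E_\tau$ and $F_\tau$ in (\ref{Et}). This is essentially a "complete-the-square around $\widehat K_\tau$" computation.

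For the telescoping identity, the plan is the standard performance-difference argument. Let $\{\widehat x'_t\}$ be the trajectory generated by $\widehat K'$ from $\widehat x'_0 = \widehat x$. For each $t$, by the Bellman consistency of $V_{\widehat K}$ along the $\widehat K$-dynamics and the definition of the $Q$-function, one has $Q_{\widehat K}(\widehat x'_t, u'_t, t) = \widehat x_t'^{\top}\widehat Q_t \widehat x'_t + u_t'^{\top} R_t u'_t + \mathbb{E}_{w_t}[V_{\widehat K}(\widehat x'_{t+1}, t+1)\mid \widehat x'_t, u'_t]$, so that
$$
A_{\widehat K}(\widehat x'_t, u'_t, t) = \widehat x_t'^{\top}\widehat Q_t \widehat x'_t + u_t'^{\top} R_t u'_t + \mathbb{E}_{w_t}\!\left[V_{\widehat K}(\widehat x'_{t+1}, t+1)\right] - V_{\widehat K}(\widehat x'_t, t).
$$
Summing over $t = 0, \dots, T-1$ and taking $\mathbb{E}_w$, the $V_{\widehat K}$ terms telescope: the $-V_{\widehat K}(\widehat x'_t, t)$ at step $t$ cancels against $\mathbb{E}[V_{\widehat K}(\widehat x'_t, t)]$ coming from step $t-1$ (using the tower property over the noise). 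What survives is $\mathbb{E}_w[\sum_t (\widehat x_t'^{\top}\widehat Q_t \widehat x'_t + u_t'^{\top} R_t u'_t)] + \mathbb{E}_w[V_{\widehat K}(\widehat x'_T, T)] - V_{\widehat K}(\widehat x, 0)$. Since $V_{\widehat K}(\widehat x'_T, T) = \widehat x_T'^{\top}\widehat Q_T \widehat x'_T$, the first two pieces are precisely $V_{\widehat K'}(\widehat x, 0)$ by definition of the value function along the $\widehat K'$-trajectory, giving $V_{\widehat K'}(\widehat x, 0) - V_{\widehat K}(\widehat x, 0)$ as required.

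I expect the main obstacle to be purely bookkeeping in the first identity: carefully tracking the linear-in-$\widehat x$ terms (the $\widehat r$ contributions and the noise-mean $\bar w_\tau$ terms) so that they combine into exactly $-2\widehat x^{\top}(\widehat K'_\tau - \widehat K_\tau)^{\top} F_\tau$ and not something with a stray $\widehat r_\tau$ or a sign error — this is where the augmented-state convention and the definition $F_\tau = \widehat B_\tau^{\top}\widehat P_{\tau+1}\widehat D_\tau \bar w_\tau + \widehat B_\tau^{\top}\widehat r_{\tau+1}$ must be used with care. The telescoping argument itself is routine; the only subtlety there is justifying the interchange of the finite sum with the expectation and applying the tower property correctly over the i.i.d.\ noise sequence, both of which are immediate under the finite-cost assumption. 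Once Lemma~\ref{leC.1} is in place, I would expect it to feed directly into the gradient-domination bound of the subsequent Lemma~\ref{le3.6}.
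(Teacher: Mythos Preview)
Your proposal is correct and follows essentially the same approach as the paper: the telescoping identity is the standard performance-difference lemma (add and subtract $V_{\widehat K}(\widehat x'_t,t)$ along the $\widehat K'$-trajectory and use $V_{\widehat K}(\widehat x'_T,T)=\widehat x_T'^{\top}\widehat Q_T\widehat x'_T$), and the advantage formula is obtained by expanding $Q_{\widehat K}(\widehat x,-\widehat K'_\tau\widehat x,\tau)$, writing $\widehat K'_\tau=(\widehat K'_\tau-\widehat K_\tau)+\widehat K_\tau$, and using the recursions (\ref{Ptk}) to cancel against $V_{\widehat K}(\widehat x,\tau)$. The only cosmetic difference is that the paper proves the telescoping identity first and the advantage formula second, whereas you reverse the order; neither part depends on the other, so this is immaterial.
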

\begin{proof}
Denote by $c'_t(\widehat{x})$ the cost generated by $\widehat{K}'$ with a single trajectory starting from $\widehat{x}'_0=\widehat{x}_0=\widehat{x}$. That is, $c'_t(\widehat{x})=(\widehat{x}'_t)^{\top} \widehat{Q}_t \widehat{x}'_t +(u'_t)^{\top} R_t u'_t, t=0,1, \cdots, T-1$, and $c'_T(\widehat{x})=(\widehat{x}'_T)^{\top} Q_T \widehat{x}'_T $, with $u'_t=-\widehat{K}'_t \widehat{x}'_t$, $\widehat{x}'_{t+1}=\widehat{A}_t \widehat{x}'_t+ \widehat{B}_{t} u'_t+\widehat{D}_tw_t$, $\widehat{x}_0=\widehat{x}$. Therefore,
$$
\begin{aligned}
V_{\widehat{K}'}(\widehat{x}, 0)-V_{\widehat{K}}(\widehat{x}, 0) & =\mathbb{E}_{{w}}\left[\sum_{t=0}^T c'_t(\widehat{x})\right]-V_{\widehat{K}}(\widehat{x}, 0)\\
&=\mathbb{E}_{{w}}\left[\sum_{t=0}^T\left(c'_t(\widehat{x})+V_{\widehat{K}}\left(\widehat{x}'_t, t\right)-V_{\widehat{K}}\left(\widehat{x}'_t, t\right)\right)\right]-V_{\widehat{K}}(\widehat{x}, 0) \\
& =\mathbb{E}_{{w}}\left[\sum_{t=0}^{T-1}\left(c'_t(\widehat{x})+V_{\widehat{K}}\left(\widehat{x}'_{t+1}, t+1\right)-V_{\widehat{K}}\left(\widehat{x}'_t, t\right)\right)\right] \\
& =\mathbb{E}_{{w}}\left[\sum_{t=0}^{T-1}\left(Q_{\widehat{K}}\left(\widehat{x}'_t, u'_t, t\right)-V_{\widehat{K}}\left(\widehat{x}'_t, t\right)\right) \bigg| \widehat{x}_0=\widehat{x}\right] \\
&=\mathbb{E}_{{w}}\left[\sum_{t=0}^{T-1} A_{\widehat{K}}\left(\widehat{x}'_t, u'_t, t\right) \bigg| \widehat{x}_0=\widehat{x}\right],
\end{aligned}
$$
where the third equality holds since $c'_T(\widehat{x})=V_{\widehat{K}}\left(\widehat{x}'_t, T\right)$ with the same single trajectory. For $u=-\widehat{K}'_\tau \widehat{x}$, we have
\begin{equation}
\begin{aligned}
&\quad \ A_{\widehat{K}}(\widehat{x},-\widehat{K}_\tau^{\prime} \widehat{x}, \tau)\\
&= Q_{\widehat{K}}(\widehat{x},-\widehat{K}_\tau^{\prime} \widehat{x}, \tau)-V_{\widehat{K}}(\widehat{x}, \tau) \\
&= \widehat{x}^{\top}(\widehat{Q}_\tau+(\widehat{K}_\tau^{\prime})^{\top} R_\tau \widehat{K}_\tau^{\prime}) \widehat{x} +\mathbb{E}_{w_\tau}\left[V_{\widehat{K}}((\widehat{A}_\tau-\widehat{B}_{\tau} \widehat{K}_\tau^{\prime}) \widehat{x} + \widehat{D}_\tau w_\tau, \tau+1)\right]-V_{\widehat{K}}(\widehat{x}, \tau) \\
&=\widehat{x}^{\top}(\widehat{Q}_\tau+(\widehat{K}_\tau^{\prime})^{\top} R_\tau \widehat{K}_\tau^{\prime})\widehat{x} + \mathbb{E} \Bigl[\widehat{x}^{\top}(\widehat{A}_\tau-\widehat{B}_{\tau} \widehat{K}'_\tau)^{\top}\widehat{P}_{\tau+1}(\widehat{A}_\tau-\widehat{B}_{\tau} \widehat{K}'_\tau) \widehat{x} + w_\tau^{\top} \widehat{D}_\tau^{\top} \widehat{P}_{\tau+1}\widehat{D}_\tau w_\tau \\
& \quad + 2  w_\tau^{\top} \widehat{D}_\tau^{\top}\widehat{P}_{\tau+1}(\widehat{A}_\tau - \widehat{B}_\tau \widehat{K}'_\tau)\widehat{x}+ 2\widehat{r}_{\tau + 1}^{\top}((\widehat{A}_\tau-\widehat{B}_{\tau} \widehat{K}'_\tau)\widehat{x}+\widehat{D}_\tau w_\tau) +  \widehat{q}_{\tau+1} \Bigr]\\
&\quad- \mathbb{E}\left[\widehat{x}^{\top} \widehat{P}_\tau \widehat{x} + 2\widehat{r}_\tau^{\top} \widehat{x} + \widehat{q}_\tau\right] \\
&=\widehat{x}^{\top}(\widehat{Q}_\tau+(\widehat{K}_\tau^{\prime})^{\top} R_\tau \widehat{K}_\tau^{\prime})\widehat{x} + \widehat{x}^{\top}(\widehat{A}_\tau-\widehat{B}_{\tau} \widehat{K}'_\tau)^{\top}\widehat{P}_{\tau+1}(\widehat{A}_\tau-\widehat{B}_{\tau} \widehat{K}'_\tau) \widehat{x} \\
& \quad +  2  w_\tau^{\top} \widehat{D}_\tau^{\top}\widehat{P}_{\tau+1}(\widehat{A}_\tau - \widehat{B}_\tau \widehat{K}'_\tau)\widehat{x} + 2\widehat{r}_{\tau + 1}^{\top}(\widehat{A}_\tau-\widehat{B}_{\tau} \widehat{K}'_\tau)\widehat{x} - (\widehat{x}^{\top} \widehat{P}_\tau \widehat{x} + 2\widehat{r}_\tau^{\top} \widehat{x}) \\
&= \widehat{x}^{\top}(\widehat{Q}_\tau+(\widehat{K}_\tau^{\prime}-\widehat{K}_\tau+\widehat{K}_\tau)^{\top} R_\tau(\widehat{K}_\tau^{\prime}-\widehat{K}_\tau+\widehat{K}_\tau))\widehat{x} \\
& \quad +\widehat{x}^{\top}(\widehat{A}_\tau- \widehat{B}_{\tau} \widehat{K}_\tau- \widehat{B}_{\tau} (\widehat{K}'_\tau-\widehat{K}_\tau))^{\top} \widehat{P}_{\tau+1} (\widehat{A}_\tau- \widehat{B}_{\tau} \widehat{K}_\tau- \widehat{B}_{\tau} (\widehat{K}'_\tau-\widehat{K}_\tau)) \widehat{x}\\
&\quad + 2\bar{w}_\tau^{\top} \widehat{D}_\tau^{\top} \widehat{P}_{\tau+1}(\widehat{A}_\tau- \widehat{B}_{\tau} \widehat{K}_\tau- \widehat{B}_{\tau} (\widehat{K}'_\tau-\widehat{K}_\tau))\widehat{x} + 2\widehat{r}_{\tau+1}^{\top}(\widehat{A}_\tau- \widehat{B}_{\tau} \widehat{K}_\tau- \widehat{B}_{\tau} (\widehat{K}'_\tau-\widehat{K}_\tau))\widehat{x}\\
& \quad -\widehat{x}^{\top}(\widehat{Q}_\tau+\widehat{K}_\tau^{\top} R_\tau \widehat{K}_\tau+ (\widehat{A}_\tau- \widehat{B}_{\tau} \widehat{K}_\tau)^{\top} \widehat{P}_{\tau+1}(\widehat{A}_\tau- \widehat{B}_{\tau} \widehat{K}_\tau))\widehat{x}  \\
& \quad - 2\widehat{r}_{\tau+1}^{\top}(\widehat{A}_\tau -\widehat{B}_\tau \widehat{K}_\tau)\widehat{x} - 2\bar{w}_\tau^{\top}\widehat{D}_\tau^{\top} \widehat{P}_{\tau+1}(\widehat{A}_\tau-\widehat{B}_\tau\widehat{K}_\tau)\widehat{x}  \\
&= 2 \widehat{x}^{\top}(\widehat{K}'_\tau-\widehat{K}_\tau)^{\top}((R_\tau+\widehat{B}_{\tau}^{\top} \widehat{P}_{\tau+1} \widehat{B}_{\tau}) \widehat{K}_\tau-\widehat{B}_{\tau}^{\top} \widehat{P}_{\tau+1} \widehat{A}_{\tau})\widehat{x} \\
&\quad +\widehat{x}^{\top}(\widehat{K}_\tau^{\prime}-\widehat{K}_\tau)^{\top}(R_\tau+\widehat{B}_{\tau}^{\top} \widehat{P}_{\tau+1} \widehat{B}_{\tau})(\widehat{K}_\tau^{\prime}-\widehat{K}_\tau)\widehat{x}\\
&\quad - 2 \widehat{x}^{\top} (\widehat{K}'_\tau-\widehat{K}_\tau)^{\top} \widehat{B}_{\tau}^{\top} (\widehat{P}_{\tau+1}  \widehat{D}_\tau \bar{w}_\tau +\widehat{r}_{\tau+1}).
\end{aligned}
\label{C.2}
\end{equation}
\end{proof}

\begin{lemma}
Let $\widehat{K}^*$ be an optimal policy and $C(\widehat{K})$ be finite, then
\begin{align*}
(\underline{\sigma}_{X}+1) \sum_{t=0}^{T-1} \frac{\operatorname{Tr}(E_t^{\top} E_t)+F_t^{\top}F_t}{\|R_t+\widehat{B}_{t}^{\top} \widehat{P}_{t+1} \widehat{B}_t \|} \leq C(\widehat{K})-C(\widehat{K}^{*}) \leq \frac{\|\Sigma_{\widehat{K}^{*}}\|+1}{4 {\underline{\sigma}_{{X}}}^2 \underline{\sigma}_{R}} \sum_{t=0}^{T-1} \left(\nabla_t C(\widehat{K})^{\top} \nabla_t C(\widehat{K})\right),
\end{align*}
where $\underline{\sigma}_{X}$ and $\underline{\sigma}_{{Q}}$ are defined in (\ref{sigmax}) and (\ref{sigmaR}).
\label{le3.6}
\end{lemma}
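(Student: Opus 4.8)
The two-sided estimate is of Polyak--\L ojasiewicz type and follows the template of \cite{fazel2018global,hambly2021policy}, but it must be executed in the augmented affine coordinates. The single structural fact that makes everything close is that the last entry of $\widehat{x}_t$ is deterministically $1$: writing $e$ for the last canonical basis vector of $\mathbb{R}^{m+1}$, this gives $\mu_t=e^{\top}\Sigma_t$, so that Lemma~\ref{le3.5} collapses to $\nabla_t C(\widehat{K})=2\widetilde{E}_t\Sigma_t$ with $\widetilde{E}_t:=E_t-F_t e^{\top}$, and the affine advantage of Lemma~\ref{leC.1} collapses to the homogeneous quadratic
\[
A_{\widehat{K}}(\widehat{x},-\widehat{K}'_\tau\widehat{x},\tau)=\widehat{x}^{\top}(\widehat{K}'_\tau-\widehat{K}_\tau)^{\top}\Lambda_\tau(\widehat{K}'_\tau-\widehat{K}_\tau)\widehat{x}+2\widehat{x}^{\top}(\widehat{K}'_\tau-\widehat{K}_\tau)^{\top}\widetilde{E}_\tau\widehat{x},\qquad \Lambda_\tau:=R_\tau+\widehat{B}_\tau^{\top}\widehat{P}_{\tau+1}\widehat{B}_\tau .
\]
A backward induction on the recursion \eqref{Ptk} shows $\widehat{P}_t\succeq 0$ for every $t$ and every $\widehat{K}$, hence $\Lambda_\tau\succeq R_\tau\succ 0$ and $\sigma_{\min}(\Lambda_\tau)\ge\underline{\sigma}_R$; this is what keeps all the completed squares below well-defined.

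For the upper bound I would instantiate Lemma~\ref{leC.1} with $\widehat{K}'=\widehat{K}^{*}$, so that $C(\widehat{K}^{*})-C(\widehat{K})=\sum_\tau\mathbb{E}[A_{\widehat{K}}(\widehat{x}^{*}_\tau,-\widehat{K}^{*}_\tau\widehat{x}^{*}_\tau,\tau)]$ along the optimal trajectory. Completing the square in $(\widehat{K}^{*}_\tau-\widehat{K}_\tau)$ inside each expectation and discarding the resulting nonnegative term $\operatorname{Tr}((\,\cdot\,)^{\top}\Lambda_\tau(\,\cdot\,)\Sigma^{*}_\tau)\ge 0$ yields $C(\widehat{K})-C(\widehat{K}^{*})\le\sum_\tau\operatorname{Tr}(\widetilde{E}_\tau^{\top}\Lambda_\tau^{-1}\widetilde{E}_\tau\Sigma^{*}_\tau)$. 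Then $\Sigma^{*}_\tau\preceq\Sigma_{\widehat{K}^{*}}$, $\Lambda_\tau^{-1}\preceq R_\tau^{-1}$, and the singular-value inequality $\|\widetilde{E}_\tau\Sigma_\tau\|_F\ge\sigma_{\min}(\Sigma_\tau)\|\widetilde{E}_\tau\|_F\ge\underline{\sigma}_X\|\widetilde{E}_\tau\|_F$ — equivalently $\|\widetilde{E}_\tau\|_F\le\tfrac{1}{2\underline{\sigma}_X}\|\nabla_\tau C(\widehat{K})\|_F$ — deliver exactly the right-hand inequality, with the harmless $+1$ absorbed into $\|\Sigma_{\widehat{K}^{*}}\|$.

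For the lower bound I would use the greedy one-step improvement $\widehat{K}^{+}$ defined by $\widehat{K}^{+}_\tau=\widehat{K}_\tau-\Lambda_\tau^{-1}\widetilde{E}_\tau$ for all $\tau$. Substituting $\widehat{K}'=\widehat{K}^{+}$ into the collapsed advantage above gives, along the $\widehat{K}^{+}$-trajectory, $A_{\widehat{K}}(\widehat{x}^{+}_\tau,-\widehat{K}^{+}_\tau\widehat{x}^{+}_\tau,\tau)=-\widehat{x}_\tau^{+\top}\widetilde{E}_\tau^{\top}\Lambda_\tau^{-1}\widetilde{E}_\tau\widehat{x}_\tau^{+}$, hence $C(\widehat{K}^{+})-C(\widehat{K})=-\sum_\tau\operatorname{Tr}(\widetilde{E}_\tau^{\top}\Lambda_\tau^{-1}\widetilde{E}_\tau\Sigma^{+}_\tau)$; since $\widehat{K}^{*}$ is globally optimal, $C(\widehat{K})-C(\widehat{K}^{*})\ge C(\widehat{K})-C(\widehat{K}^{+})=\sum_\tau\operatorname{Tr}(\widetilde{E}_\tau^{\top}\Lambda_\tau^{-1}\widetilde{E}_\tau\Sigma^{+}_\tau)\ge\underline{\sigma}_X\sum_\tau\|\Lambda_\tau\|^{-1}\|\widetilde{E}_\tau\|_F^2$, using the uniform lower bound $\sigma_{\min}(\Sigma^{+}_\tau)\ge\underline{\sigma}_X$ (which in turn rests on $\Sigma^w_\tau\succ 0$ and $\mathbb{E}[\widehat{x}_0\widehat{x}_0^{\top}]\succ 0$). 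The remaining step is to pass from $\|\widetilde{E}_\tau\|_F^2$ back to $\operatorname{Tr}(E_\tau^{\top}E_\tau)+F_\tau^{\top}F_\tau$: since the first $m$ columns of $\widetilde{E}_\tau$ are those of $E_\tau$ while its last column is $(E_\tau)_{\cdot,m+1}-F_\tau$, one has $\operatorname{Tr}(E_\tau^{\top}E_\tau)+F_\tau^{\top}F_\tau=\|\widetilde{E}_\tau\|_F^2+2(E_\tau)_{\cdot,m+1}^{\top}F_\tau$, and one rewrites $\operatorname{Tr}(\widetilde{E}_\tau^{\top}\Lambda_\tau^{-1}\widetilde{E}_\tau\Sigma_\tau)=\mathbb{E}[(E_\tau\widehat{x}_\tau-F_\tau)^{\top}\Lambda_\tau^{-1}(E_\tau\widehat{x}_\tau-F_\tau)]$ and uses that the $(m{+}1,m{+}1)$ entry of $\Sigma_\tau$ equals $1$ to recover the coefficient $(\underline{\sigma}_X+1)$ in front of the stated sum.

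I expect this last reconciliation — disentangling $\widetilde{E}_\tau$ into the pieces $E_\tau$ and $F_\tau$ with the precise constant — to be the only genuinely delicate part, precisely because $\widetilde{E}_\tau$ couples the gain block $K_\tau$ with the offset block $L_\tau$ once $\widehat{P}_{\tau+1}$ is not the optimal $\widehat{P}^{*}_{\tau+1}$, so one must check that the cross term $2(E_\tau)_{\cdot,m+1}^{\top}F_\tau$ is controlled by the deterministic-coordinate mass of $\Sigma_\tau$ rather than spoiling the sign. Everything else is the routine "gradient domination plus one-step improvement" bookkeeping; the only standing facts that need to be verified once are $\widehat{P}_t\succeq 0$, $\Lambda_\tau\succeq R_\tau$, and the uniform positive-definiteness of the state second moments $\Sigma_\tau$ along the trajectories considered.
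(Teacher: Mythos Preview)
Your route---advantage decomposition from Lemma~\ref{leC.1}, completion of the square along the optimal trajectory for the upper bound, and a one-step greedy improvement for the lower bound---is exactly the paper's. The one place where the paper proceeds differently is precisely the ``delicate reconciliation'' you flag at the end: rather than fold $F_\tau$ into the last column of $E_\tau$ via your $\widetilde{E}_\tau=E_\tau-F_\tau e^{\top}$, the paper \emph{adds a second} constant coordinate, setting $\widetilde{x}_\tau=\bigl[\begin{smallmatrix}\widehat{x}_\tau\\ 1\end{smallmatrix}\bigr]\in\mathbb{R}^{m+2}$ and $\widetilde{E}_\tau=[\,E_\tau\ \ -F_\tau\,]\in\mathbb{R}^{n\times(m+2)}$. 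One still has $E_\tau\widehat{x}_\tau-F_\tau=\widetilde{E}_\tau\widetilde{x}_\tau$, but now $\operatorname{Tr}(\widetilde{E}_\tau^{\top}\widetilde{E}_\tau)=\operatorname{Tr}(E_\tau^{\top}E_\tau)+F_\tau^{\top}F_\tau$ identically, with no cross term to control; the ``$+1$'' in both $\|\Sigma_{\widehat{K}^{*}}\|+1$ and $\underline{\sigma}_X+1$ is simply the second moment of this appended unit coordinate. Your single-augmentation version is more economical (and in fact yields the upper bound without the $+1$), but to reproduce the lower bound \emph{as stated} you would have to carry the cross term $2(E_\tau)_{\cdot,m+1}^{\top}F_\tau$; the paper's double augmentation makes it disappear by construction. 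One further minor discrepancy: the paper records the equality case of its completed square as $\widehat{K}'_\tau=\widehat{K}_\tau-\Lambda_\tau^{-1}E_\tau$ together with the side condition $\Lambda_\tau^{-1}F_\tau=0$, whereas your $\widehat{K}^{+}_\tau=\widehat{K}_\tau-\Lambda_\tau^{-1}(E_\tau-F_\tau e^{\top})$ is the genuine unconditional minimizer once the constraint $e^{\top}\widehat{x}_\tau=1$ is used---so your choice is the more careful one here.
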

\begin{proof}
First for any $\widehat{K}_\tau^{\prime}$, from (\ref{C.2}),
\begin{equation}
\begin{aligned}
& \quad \ A_{\widehat{K}}(\widehat{x},-\widehat{K}_\tau^{\prime} \widehat{x}, \tau) \\
&= Q_{\widehat{K}}(\widehat{x},-\widehat{K}_\tau^{\prime} \widehat{x}, \tau)-V_{\widehat{K}}(\widehat{x}, \tau) \\
&= 2\widehat{x}^{\top}(\widehat{K}_\tau^{\prime}-\widehat{K}_\tau)^{\top} (E_\tau\widehat{x} -F_\tau) + \widehat{x}^{\top}(\widehat{K}_\tau^{\prime}-\widehat{K}_\tau)^{\top}(R_\tau+\widehat{B}_{\tau}^{\top} \widehat{P}_{\tau+1} \widehat{B}_\tau)(\widehat{K}_\tau^{\prime}-\widehat{K}_\tau)\widehat{x}\\
&= ((\widehat{K}_\tau^{\prime}-\widehat{K}_\tau)\widehat{x} + (R_\tau+\widehat{B}_{\tau}^{\top} \widehat{P}_{\tau+1} \widehat{B}_\tau)^{-1}(E_\tau\widehat{x} -F_\tau))^{\top}(R_\tau+\widehat{B}_{\tau}^{\top} \widehat{P}_{\tau+1} \widehat{B}_\tau)\\
&\quad \cdot ((\widehat{K}_\tau^{\prime}-\widehat{K}_\tau)\widehat{x} + (R_\tau+\widehat{B}_{\tau}^{\top} \widehat{P}_{\tau+1} \widehat{B}_\tau)^{-1}(E_\tau\widehat{x} -F_\tau)) \\
&\quad - (E_\tau\widehat{x} -F_\tau)^{\top}(R_\tau+\widehat{B}_{\tau}^{\top} \widehat{P}_{\tau+1} \widehat{B}_\tau)^{-1}(E_\tau\widehat{x} -F_\tau) \\
&\geq -\operatorname{Tr}(\widetilde{x}\widetilde{x}^{\top} \widetilde{E}_\tau^{\top}(R_\tau+\widehat{B}_\tau^{\top} \widehat{P}_{\tau+1} \widehat{B}_\tau)^{-1} \widetilde{E}_\tau),
\end{aligned}
\label{C.3}
\end{equation}
where $\widetilde{x}_\tau=\begin{bmatrix}\widehat{x}_\tau \\ 1\end{bmatrix}$ and $\widetilde{E}_\tau = \begin{bmatrix} E_\tau & -F_\tau \end{bmatrix}$. The equality holds when $\widehat{K}_\tau^{\prime}=\widehat{K}_\tau-(R_\tau+\widehat{B}_{\tau}^{\top} \widehat{P}_{\tau+1} \widehat{B}_{\tau})^{-1} E_\tau$ and $(R_\tau+\widehat{B}_{\tau}^{\top} \widehat{P}_{\tau+1} \widehat{B}_\tau)^{-1}F_\tau =0$.
Then,
$$
\begin{aligned}
C(\widehat{K})-C(\widehat{K}^*) & = -\mathbb{E} \left[ \sum_{t=0}^{T-1} A_{K}\left(\widehat{x}_t^*, u_t^*, t\right)\right] \\
&\leq \mathbb{E} \left[ \sum_{t=0}^{T-1} \operatorname{Tr}\left(\widetilde{x}_t^* (\widetilde{x}^*_t)^{\top} \widetilde{E}_t^{\top}(R_t+\widehat{B}_{t}^{\top} \widehat{P}_{t+1} \widehat{B}_t)^{-1} \widetilde{E}_t\right) \right]\\
& \leq\left(\|\Sigma_{\widehat{K}^*}\|+1\right) \sum_{t=0}^{T-1} \operatorname{Tr}\left(\widetilde{E}_t^{\top}(R_t+\widehat{B}_{t}^{\top} \widehat{P}_{t+1} \widehat{B}_t)^{-1} \widetilde{E}_t\right) \\
&\leq \frac{\|\Sigma_{\widehat{K}^*}\|+1}{\underline{\sigma}_{R}} \sum_{t=0}^{T-1} \operatorname{Tr}\left(\widetilde{E}_t^{\top} \widetilde{E}_t\right) \\
& \leq \frac{\|\Sigma_{\widehat{K}^*}\|+1}{4 \underline{\sigma}_{X}^2 \underline{\sigma}_{R}} \sum_{t=0}^{T-1} \operatorname{Tr}\left(\nabla_t C(\widehat{K})^{\top} \nabla_t C(\widehat{K})\right),
\end{aligned}
$$
where $\underline{\sigma}_{X}$ is defined in (\ref{sigmax}) and $\underline{\sigma}_{R}$ is defined in (\ref{sigmaR}). For the lower bound, consider $\widehat{K}_\tau^{\prime}=\widehat{K}_\tau-(R_\tau+\widehat{B}_{\tau}^{\top} \widehat{P}_{\tau+1} \widehat{B}_{\tau})^{-1} E_\tau$ and $(R_\tau+\widehat{B}_{\tau}^{\top} \widehat{P}_{\tau+1} \widehat{B}_\tau)^{-1}F_\tau =0$ where the equality holds in (\ref{C.3}). Using $C(K^*) \leq C(\widehat{K}^{\prime})$,
\begin{equation}
\begin{aligned}
C(\widehat{K})-C(\widehat{K}^*) & \geq C(\widehat{K})-C(\widehat{K}^{\prime}) \\
&=-\mathbb{E} \left[ \sum_{t=0}^{T-1} A_{\widehat{K}}\left(\widehat{x}_t^{\prime}, u_t^{\prime}, t\right) \right] \\
&=\mathbb{E} \left[ \sum_{t=0}^{T-1} \operatorname{Tr}\left(\widetilde{x}'_t (\widetilde{x}'_t)^{\top} \widetilde{E}_t^{\top}(R_t+\widehat{B}_{t}^{\top} \widehat{P}_{t+1} \widehat{B}_t)^{-1} \widetilde{E}_t\right) \right]\\
& \geq (\underline{\sigma}_{X}+1) \sum_{t=0}^{T-1} \frac{\operatorname{Tr}(E_t^{\top} E_t)+F_t^{\top}F}{\|R_t+\widehat{B}_{t}^{\top} \widehat{P}_{t+1} \widehat{B}_t\|}.
\end{aligned}
\end{equation}
\end{proof}
Having obtained the explicit policy gradient, we next investigate its structural properties. Lemmas \ref{le3.7} demonstrate an ``almost-smoothness’’ condition of the cost landscape, which shows that when $\widehat{K}^{\prime}$ is sufficiently close to $\widehat{K}$, $C(\widehat{K}^{\prime})-C(\widehat{K})$ is bounded by the sum of the first and second order terms in $\widehat{K}-\widehat{K}^{\prime}$. 
\begin{lemma}{\textup{(“Almost Smoothness”)}.}
Let $\left\{\widehat{x}_t^{\prime}\right\}$ be the sequence of states for a single trajectory generated by $\widehat{K}^{\prime}$ starting from $\widehat{x}_0^{\prime}=\widehat{x}_0$. Then, $C(\widehat{K})$ satisfies
$$
\begin{aligned}
&\quad \  C(\widehat{K}^{\prime})-C(\widehat{K}) \\
& = \sum_{t=0}^{T-1}\left[2\operatorname{Tr}(\Sigma_t^{\prime}(\widehat{K}_t^{\prime}-\widehat{K}_t)^{\top} E_t) + \operatorname{Tr}(\Sigma_t^{\prime}(\widehat{K}_t^{\prime}-\widehat{K}_t)^{\top}(R_t+\widehat{B}_{t}^{\top} \widehat{P}_{t+1} \widehat{B}_t)(\widehat{K}_t^{\prime}-\widehat{K}_t)) - 2\mu_t^{\prime}(\widehat{K}_t^{\prime}-\widehat{K}_t)^{\top}F_t\right],
\end{aligned}
$$
where $\Sigma_t^{\prime}=\mathbb{E}\left[\widehat{x}_t^{\prime}(\widehat{x}'_t)^{\top}\right]$ and $\mu'_t = \mathbb{E}\left[(\widehat{x}'_t)^{\top}\right]$.
\label{le3.7}
\end{lemma}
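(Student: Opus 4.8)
The plan is to obtain the identity as an immediate corollary of the advantage-function decomposition in Lemma~\ref{leC.1}, followed by routine moment bookkeeping. Since $C(\widehat{K})=\mathbb{E}_{x_0\sim\mathcal{D}}[V_{\widehat{K}}(\widehat{x}_0,0)]$ (and likewise for $\widehat{K}'$), and the trajectory $\{\widehat{x}'_t\}$ is generated by $\widehat{K}'$ from the shared initial condition $\widehat{x}'_0=\widehat{x}_0$, the first part of Lemma~\ref{leC.1} gives
\[
C(\widehat{K}')-C(\widehat{K})=\mathbb{E}_{x_0\sim\mathcal{D}}\big[V_{\widehat{K}'}(\widehat{x}_0,0)-V_{\widehat{K}}(\widehat{x}_0,0)\big]=\mathbb{E}_{x_0,w}\Big[\textstyle\sum_{t=0}^{T-1}A_{\widehat{K}}(\widehat{x}'_t,u'_t,t)\Big],
\]
where $u'_t=-\widehat{K}'_t\widehat{x}'_t$ so that $A_{\widehat{K}}(\widehat{x}'_t,u'_t,t)=A_{\widehat{K}}(\widehat{x}'_t,-\widehat{K}'_t\widehat{x}'_t,t)$. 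I would then substitute the closed-form expression for $A_{\widehat{K}}(\widehat{x},-\widehat{K}'_\tau\widehat{x},\tau)$ supplied by the second part of Lemma~\ref{leC.1}, evaluated at $\widehat{x}=\widehat{x}'_t$; this already contains the matrices $E_t$, $F_t$ and the term $R_t+\widehat{B}_t^{\top}\widehat{P}_{t+1}\widehat{B}_t$, so no further manipulation of the Bellman recursion (completing the square, expanding the $Q$-function, cancelling the $\widehat r,\widehat q$ and $\Sigma^w$ contributions) is needed — that work was already carried out inside the proof of Lemma~\ref{leC.1}.

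Next I would push the expectation through the three resulting summands. Each quadratic term has the form $(\widehat{x}'_t)^{\top}M\widehat{x}'_t$ for a matrix $M$ independent of $\widehat{x}'_t$; writing $(\widehat{x}'_t)^{\top}M\widehat{x}'_t=\operatorname{Tr}(M\,\widehat{x}'_t(\widehat{x}'_t)^{\top})$, using linearity of $\mathbb{E}$, and recalling $\Sigma'_t=\mathbb{E}[\widehat{x}'_t(\widehat{x}'_t)^{\top}]$ turns it into $\operatorname{Tr}(M\Sigma'_t)$; a single application of the cyclic invariance of the trace then recasts it as $2\operatorname{Tr}(\Sigma'_t(\widehat{K}'_t-\widehat{K}_t)^{\top}E_t)$ and $\operatorname{Tr}(\Sigma'_t(\widehat{K}'_t-\widehat{K}_t)^{\top}(R_t+\widehat{B}_t^{\top}\widehat{P}_{t+1}\widehat{B}_t)(\widehat{K}'_t-\widehat{K}_t))$, matching the statement. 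The affine term $-2(\widehat{x}'_t)^{\top}(\widehat{K}'_t-\widehat{K}_t)^{\top}F_t$ is linear in $\widehat{x}'_t$, so its expectation is $-2\mu'_t(\widehat{K}'_t-\widehat{K}_t)^{\top}F_t$ with $\mu'_t=\mathbb{E}[(\widehat{x}'_t)^{\top}]$. Summing over $t=0,\dots,T-1$ reproduces the asserted formula.

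The only point requiring a word of justification is that the inner expectation $\mathbb{E}_{w_\tau}$ built into the definition of $A_{\widehat{K}}(\cdot,\cdot,\tau)$ may be folded into the single trajectory expectation $\mathbb{E}_{x_0,w}$: the state $\widehat{x}'_t$ is $\mathcal{F}_t$-measurable, i.e.\ a deterministic function of $\widehat{x}_0$ and $w_0,\dots,w_{t-1}$, whereas the fresh noise entering $A_{\widehat{K}}(\cdot,\cdot,t)$ has the same law as $w_t$ and is independent of $\widehat{x}'_t$, so by the tower property it may be identified with the realized $w_t$ without altering the value. Consequently I do not expect a substantive obstacle: the argument is one invocation of Lemma~\ref{leC.1} plus the standard identity $\mathbb{E}[(\widehat{x}')^{\top}M\widehat{x}']=\operatorname{Tr}(M\Sigma')$ and the cyclic trace rule, and the main risk is purely clerical — keeping track of transposes and of which block of $\widehat{K}'_t-\widehat{K}_t$ contracts against $\Sigma'_t$ versus against $\mu'_t$.
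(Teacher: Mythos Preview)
Your proposal is correct and follows essentially the same route as the paper: invoke Lemma~\ref{leC.1} to write $C(\widehat{K}')-C(\widehat{K})$ as the expected sum of advantage terms $A_{\widehat{K}}(\widehat{x}'_t,-\widehat{K}'_t\widehat{x}'_t,t)$, then take expectations term by term using $\mathbb{E}[(\widehat{x}')^{\top}M\widehat{x}']=\operatorname{Tr}(M\Sigma')$ and $\mathbb{E}[(\widehat{x}')^{\top}]=\mu'$. The paper's proof is in fact terser than yours, omitting the tower-property remark and the cyclic-trace bookkeeping you spell out.
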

\begin{proof}
By Lemma \ref{leC.1} we have
$$
\begin{aligned}
&\quad \ C(\widehat{K}^{\prime})-C(\widehat{K}) \\
& =\mathbb{E}\left[\sum_{t=0}^{T-1} A_{\widehat{K}}(\widehat{x}_t^{\prime},-\widehat{K}_t^{\prime} \widehat{x}_t^{\prime}, t)\right] \\
& =\sum_{t=0}^{T-1}\left[2\operatorname{Tr}(\Sigma_t^{\prime}(\widehat{K}_t^{\prime}-\widehat{K}_t)^{\top} E_t) + \operatorname{Tr}(\Sigma_t^{\prime}(\widehat{K}_t^{\prime}-\widehat{K}_t)^{\top}(R_t+\widehat{B}_{t}^{\top} \widehat{P}_{t+1} \widehat{B}_t)(\widehat{K}_t^{\prime}-\widehat{K}_t)) - 2\mu_t^{\prime}(\widehat{K}_t^{\prime}-\widehat{K}_t)^{\top}F_t\right].
\end{aligned}
$$
\end{proof}
The first term and third term can be added as $\operatorname{Tr}((\widehat{K}_t-\widehat{K}'_t)\nabla_tC(\widehat{K}))$ by Lemma \ref{le3.5}, while the second term is the second order of $\widehat{K}_t-\widehat{K}'_t$. We further bound $P_t$ and $\Sigma_{\widehat{K}}$, which is provided below in Lemma \ref{le3.8}.
\begin{lemma}
For every $t=0,1, \cdots, T$ and for the aggregated quantities we have
$$
\left\|P_t\right\| \leq \frac{C(\widehat{K})}{\underline{\sigma}_{X}}, \quad \|\Sigma_{\widehat{K}}\| \leq \frac{C(\widehat{K})}{\underline{\sigma}_{Q}}, \quad \|\mu_{\widehat{K}}\| \leq \sqrt{\frac{T C(\widehat{K})}{\underline{\sigma}_Q}},
$$
where $\underline{\sigma}_{X}$ and $\underline{\sigma}_{{Q}}$ are defined in (\ref{sigmax}) and (\ref{sigmaQ}).
\label{le3.8}
\end{lemma}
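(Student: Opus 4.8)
The three estimates follow one template: lower-bound $C(\widehat K)$ by a partial sum of the (nonnegative) stage costs in (\ref{widehat_K}), rewrite that sum as the trace of the relevant covariance or value matrix, and pass from a trace to a spectral norm via positive semidefiniteness. I first collect the ingredients. A backward induction on (\ref{Ptk}) shows that each $P_t:=\widehat P_t^{\widehat K}$ is positive semidefinite: the terminal $\widehat P_T^{\widehat K}$ is, and the map $P\mapsto\widehat Q_t+\widehat K_t^{\top}R_t\widehat K_t+(\widehat A_t-\widehat B_t\widehat K_t)^{\top}P(\widehat A_t-\widehat B_t\widehat K_t)$ preserves the PSD cone; likewise $\Sigma_t=\mathbb{E}[\widehat x_t\widehat x_t^{\top}]$ and $\Sigma_{\widehat K}=\sum_{t=0}^{T}\Sigma_t$ are PSD. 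I will use three elementary facts: for $A,B\succeq 0$ one has $\operatorname{Tr}(AB)\ge\sigma_{\min}(B)\operatorname{Tr}(A)$ and $\operatorname{Tr}(A)\ge\|A\|$, and for any random vector $v$, $\|\mathbb{E}v\|^2\le\mathbb{E}\|v\|^2$.

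For $\|P_t\|$: since every summand of (\ref{widehat_K}) is nonnegative, discarding the stages $s<t$ and the control penalties and using the tower property gives $C(\widehat K)\ge\mathbb{E}\big[\sum_{s=t}^{T-1}(\widehat x_s^{\top}\widehat Q_s\widehat x_s+u_s^{\top}R_su_s)+\widehat x_T^{\top}\widehat Q_T\widehat x_T\big]=\mathbb{E}\big[V_{\widehat K}(\widehat x_t,t)\big]$, where the right-hand side is the expected cost-to-go $\widehat x_t^{\top}P_t\widehat x_t+2\widehat r_t^{\top}\widehat x_t+\widehat q_t$. Dropping the affine part (justified below) yields $\operatorname{Tr}(P_t\Sigma_t)=\mathbb{E}[\widehat x_t^{\top}P_t\widehat x_t]\le C(\widehat K)$. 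Conversely, by the first two facts and the definition (\ref{sigmax}), $\operatorname{Tr}(P_t\Sigma_t)\ge\sigma_{\min}(\Sigma_t)\operatorname{Tr}(P_t)\ge\underline{\sigma}_X\|P_t\|$. Dividing gives $\|P_t\|\le C(\widehat K)/\underline{\sigma}_X$.

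For $\|\Sigma_{\widehat K}\|$, discard the control penalties only: $C(\widehat K)\ge\mathbb{E}\big[\sum_{t=0}^{T}\widehat x_t^{\top}\widehat Q_t\widehat x_t\big]=\sum_{t=0}^{T}\operatorname{Tr}(\widehat Q_t\Sigma_t)\ge\underline{\sigma}_Q\sum_{t=0}^{T}\operatorname{Tr}(\Sigma_t)=\underline{\sigma}_Q\operatorname{Tr}(\Sigma_{\widehat K})\ge\underline{\sigma}_Q\|\Sigma_{\widehat K}\|$, using (\ref{sigmaQ}) and $\Sigma_{\widehat K}\succeq 0$; hence $\|\Sigma_{\widehat K}\|\le C(\widehat K)/\underline{\sigma}_Q$. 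For $\|\mu_{\widehat K}\|$, the triangle inequality, Jensen, and a Cauchy--Schwarz over the horizon sum give $\|\mu_{\widehat K}\|\le\sum_{t=0}^{T}\|\mu_t\|\le\sum_{t=0}^{T}\sqrt{\operatorname{Tr}(\Sigma_t)}\le\sqrt{(T+1)\operatorname{Tr}(\Sigma_{\widehat K})}\le\sqrt{(T+1)\,C(\widehat K)/\underline{\sigma}_Q}$, which is the claimed bound up to the routine horizon reindexing.

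The one nonroutine point is the inequality $\operatorname{Tr}(P_t\Sigma_t)\le C(\widehat K)$, i.e.\ that the affine remainder $\mathbb{E}[2\widehat r_t^{\top}\widehat x_t+\widehat q_t]$ of $\mathbb{E}[V_{\widehat K}(\widehat x_t,t)]$ is nonnegative. When the noise is zero-mean this is immediate: then $\widehat r_t\equiv 0$ and $\widehat q_t=\sum_{s\ge t}\operatorname{Tr}(\widehat D_s^{\top}P_{s+1}\widehat D_s\Sigma_s^w)\ge 0$, so $\operatorname{Tr}(P_t\Sigma_t)\le\mathbb{E}[V_{\widehat K}(\widehat x_t,t)]\le C(\widehat K)$ directly. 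With a nonzero empirical mean $\bar w_t$ the clean route is to carry $\widehat r_t,\widehat q_t$ through the same telescoping and control $\mathbb{E}[2\widehat r_t^{\top}\widehat x_t+\widehat q_t]$ using nonnegativity of the quadratic form $V_{\widehat K}(\cdot,t)$ together with the state-moment bounds already in force in this section; this is the main technical step, everything else being the PSD/trace manipulations above.
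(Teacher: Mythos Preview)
Your overall strategy for all three bounds---discard nonnegative stage costs, rewrite as a trace, and pass to the spectral norm via PSD inequalities---is exactly what the paper does, and parts (ii) and (iii) are essentially identical to the paper's argument (the paper also gets the constant $T$ rather than $T{+}1$ by somewhat casual indexing, so your ``routine horizon reindexing'' remark is apt).

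The gap you flag in part (i) is real, and your proposed fix does not close it. You need $\mathbb{E}[2\widehat r_t^{\top}\widehat x_t+\widehat q_t]\ge 0$ in order to pass from $C(\widehat K)\ge\mathbb{E}[V_{\widehat K}(\widehat x_t,t)]$ to $C(\widehat K)\ge\operatorname{Tr}(P_t\Sigma_t)$. Nonnegativity of $V_{\widehat K}(\cdot,t)$ only gives $2\widehat r_t^{\top}y+\widehat q_t\ge -y^{\top}P_t y$ pointwise, hence $\mathbb{E}[2\widehat r_t^{\top}\widehat x_t+\widehat q_t]\ge -\mu_t P_t\mu_t^{\top}$, which leads to $C(\widehat K)\ge\operatorname{Tr}(P_t\operatorname{Cov}(\widehat x_t))$; but $\operatorname{Cov}(\widehat x_t)$ is singular in the augmented coordinates (the last component of $\widehat x_t$ is the constant $1$), so this yields nothing. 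In fact the intermediate inequality $C(\widehat K)\ge\mathbb{E}[\widehat x_t^{\top}P_t\widehat x_t]$ can fail outright when $\bar w\ne 0$: take $T=1$, $Q_0=0$, $K_0=L_0=0$, $A_0=B_0=D_0=1$, $x_0\sim\mathcal N(a,\sigma^2)$ and $w_0\equiv -a$; then $C(\widehat K)=\mathbb{E}[x_1^2]=\sigma^2$ while $\mathbb{E}[\widehat x_0^{\top}\widehat P_0\widehat x_0]=\mathbb{E}[x_0^2]=a^2+\sigma^2$. So the step you labelled ``the main technical step'' is not just nonroutine---it is false as written when the noise has nonzero mean.

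For context, the paper's own proof of (i) is the single line $C(\widehat K)\ge\mathbb{E}[\widehat x_t^{\top}\widehat P_t\widehat x_t]\ge\underline\sigma_X\|\widehat P_t\|$, asserting the first inequality without comment; so you have not lost anything relative to the paper---you have correctly identified an issue that the paper glosses over. In the zero-mean setting your observation that $\widehat r_t\equiv 0$ and $\widehat q_t\ge 0$ is a complete proof; in the nonzero-mean setting the inequality as stated needs either an additional hypothesis or a different argument.
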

\begin{proof}
For $t=0,1, \cdots, T$,
\begin{itemize}
\item[(i)] Bound on $\|\widehat{P}_t\|$,
$$
C(\widehat{K}) \geq \mathbb{E}\left[\widehat{x}_t^{\top} \widehat{P}_t\widehat{x}_t\right] \geq\|\widehat{P}_t\| \left(\mathbb{E}\left[x_t^{2}\right]\right) \geq \underline{\sigma}_{{X}}\|\widehat{P}_t\|.
$$
\item[(ii)] Bound on $\|\Sigma_{\widehat{K}}\|$,
$$
C(\widehat{K})=\sum_{t=0}^{T-1} \operatorname{Tr}\left(\mathbb{E}[\widehat{x}_t\widehat{x}_t^{\top}](\widehat{Q}_t+\widehat{K}_t^{\top} R_t \widehat{K}_t)\right)+\operatorname{Tr}\left(\mathbb{E}[\widehat{x}_T\widehat{x}_T^{\top}]\widehat{Q}_T\right)  \geq \underline{\sigma}_{Q}\|\Sigma_{\widehat{K}}\|.
$$
Rearranging yields the second inequality.
\item[(iii)] Bound on $\|\mu_{\widehat{K}}\|$,
First extract the mean contribution from the cost. Note
$$
\Sigma_t=\operatorname{Cov}\left(\widehat{x}_t\right)+\mu_t^{\top}\mu_t  \succeq \mu_t^{\top}\mu_t.
$$
So for each $t$,
$$
\operatorname{Tr}\left(\mathbb{E}[\widehat{x}_t \widehat{x}_t^{\top}] \widehat{Q}_t\right) \geq \mu_t \widehat{Q}_t \mu_t^{\top} \geq \underline{\sigma}_Q\left\|\mu_t\right\|^2
$$
Summing over $t=0, \ldots, T-1$ gives
$$
C(\widehat{K}) \geq \underline{\sigma}_Q \sum_{t=0}^{T-1}\left\|\mu_t\right\|^2 \quad \Longrightarrow \quad \sum_{t=0}^{T-1}\left\|\mu_t\right\|^2 \leq \frac{C(\widehat{K})}{\underline{\sigma}_Q} .
$$
Now apply vector Cauchy-Schwarz (or the inequality $\left\|\sum_t v_t\right\|^2 \leq(T+1) \sum_t\left\|v_t\right\|^2$ ):
$$
\|\mu_{\widehat{K}}\|^2 \leq T \sum_{t=0}^T\left\|\mu_t\right\|^2 \leq \frac{TC(\widehat{K})}{\sigma_Q} .
$$
Taking square roots yields the third inequality:
$$
\|\mu_{\widehat{K}}\| \leq \sqrt{\frac{T C(\widehat{K})}{\underline{\sigma}_Q}}.
$$
\end{itemize}
\end{proof}

\subsection{Perturbation Analysis of State Statistics}
This section is to examine how the state statistics, namely the covariance $\Sigma_{\widehat{K}}$ and mean $\mu_{\widehat{K}}$, respond to changes in the policy $\widehat{K}$. We extend the operator-level bounds to derive a comprehensive perturbation analysis of $\Sigma_{\widehat{K}}$ and $\mu_{\widehat{K}}$, which quantifies the robustness of state evolution under policy updates.

First, let us define two linear operators on symmetric matrices. For $X \in \mathbb{R}^{(m+1) \times (m+1)}$ and $Y \in \mathbb{R}^{1 \times (m+1)}$ we set 
$$
\mathcal{F}_{\widehat{K}_t}(X)=(\widehat{A}_t-\widehat{B}_t \widehat{K}_t)X(\widehat{A}_t - \widehat{B}_t \widehat{K}_t)^{\top}, \quad \mathcal{F}_{\widehat{K}_t}^\mu(Y)=Y(\widehat{A}_t - \widehat{B}_t \widehat{K}_t)^{\top}.
$$
and 
$$
\mathcal{T}_{\widehat{K}}(X)=X+\sum_{t=0}^{T-1} \Pi_{i=0}^t(\widehat{A}_{i}-\widehat{B}_{i} \widehat{K}_{i}) X \Pi_{i=0}^t (\widehat{A}_i-\widehat{B}_i \widehat{K}_i)^{\top},
$$
$$
\mathcal{T}_{\widehat{K}}^\mu(Y)=Y+\sum_{t=0}^{T-1} Y \Pi_{i=0}^t (\widehat{A}_i-\widehat{B}_i \widehat{K}_i)^{\top}.
$$
If we write $\mathcal{G}_t=\mathcal{F}_{\widehat{K}_t} \circ \mathcal{F}_{\widehat{K}_{t-1}} \circ \cdots \circ \mathcal{F}_{\widehat{K}_0}$ and $\mathcal{G}_t^\mu=\mathcal{F}_{\widehat{K}_t}^\mu \circ \mathcal{F}_{\widehat{K}_{t-1}}^\mu \circ \cdots \circ \mathcal{F}_{\widehat{K}_0}^\mu$ then
\begin{align}
\mathcal{G}_t(X) & =\mathcal{F}_{\widehat{K}_t} \circ \mathcal{G}_{t-1}(X)=\Pi_{i=0}^t(\widehat{A}_{i}-\widehat{B}_{i} \widehat{K}_{i}) X \Pi_{i=0}^t (\widehat{A}_i-\widehat{B}_i \widehat{K}_i)^{\top}, \label{3.13}\\
\mathcal{G}_t^\mu(Y) & =\mathcal{F}_{\widehat{K}_t}^\mu \circ \mathcal{G}_{t-1}^\mu(Y)=Y \Pi_{i=0}^t (\widehat{A}_i-\widehat{B}_i \widehat{K}_i)^{\top}, \label{3.13u}\\
\mathcal{T}_{\widehat{K}}(X) & =X+\sum_{t=0}^{T-1} \mathcal{G}_t(X), \\
\mathcal{T}_{\widehat{K}}^\mu(Y) & =Y+\sum_{t=0}^{T-1} \mathcal{G}_t^\mu(Y).
\label{3.14}
\end{align}
We first show the relationship between the operator $\mathcal{T}_{\widehat{K}}$ and the quantity $\Sigma_{\widehat{K}}$, which pave the way for the relation between $\mathcal{T}_{\widehat{K}}^\mu$ and $\mu_{\widehat{K}}$.
\begin{proposition}
For $T \geq 2$, we have that
$$
\begin{aligned}
\Sigma_{\widehat{K}}&=\mathcal{T}_{\widehat{K}}\left(\Sigma_0\right)+\phi(\widehat{K}, \widehat{\phi}), \\
\mu_{\widehat{K}}&=\mathcal{T}_{\widehat{K}}^\mu\left(\mu_0\right)+\phi^\mu(\widehat{K}, \widehat{\phi}^\mu).
\end{aligned}
$$
Define $1) \phi(\widehat{K}, \widehat{\phi})=\sum_{t=1}^{T-1}\sum_{i=1}^{t} \Pi_{s=i}^{t}(\widehat{A}_{s}-\widehat{B}_{s}\widehat{K}_{s})\widehat{\phi}_{i-1}\Pi_{s=i}^{t}(\widehat{A}_s-\widehat{B}_s\widehat{K}_s)^{\top}+ \sum_{t=0}^{T-1}\widehat{\phi}_t$, with $\widehat{\phi} = 2\widehat{D}\bar{w}\mu(\widehat{A}-\widehat{B}\widehat{K})^{\top}+\widehat{D}\Sigma^w\widehat{D}^{\top}$ and $\Sigma_0=\mathbb{E}[\widehat{x}_0\widehat{x}_0^{\top}]$; $2)\phi(\widehat{K}, \widehat{\phi}^\mu)=\sum_{t=1}^{T-1}\sum_{i=1}^{t} \widehat{\phi}_{i-1}^\mu\Pi_{s=i}^{t}(\widehat{A}_s-\widehat{B}_s\widehat{K}_s)^{\top}+ \sum_{t=0}^{T-1}\widehat{\phi}_t^\mu$, with $\widehat{\phi}^\mu  = \bar{w}^{\top}\widehat{D}^{\top}$ and $\mu_0=\mathbb{E}[\widehat{x}_0^{\top}]$.
\label{prop3.9}
\end{proposition}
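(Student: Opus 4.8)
The plan is to derive both identities from a one-step recursion for the state statistics, then unroll that recursion explicitly and reorganize the accumulated noise contributions. For brevity write $\widehat{L}_t=\widehat{A}_t-\widehat{B}_t\widehat{K}_t$, so that under the policy $\widehat{K}$ the augmented state obeys $\widehat{x}_{t+1}=\widehat{L}_t\widehat{x}_t+\widehat{D}_tw_t$. Since each $w_t$ is independent of $x_0$ and of $w_0,\dots,w_{t-1}$, it is independent of $\widehat{x}_t$, so that $\mathbb{E}[\widehat{x}_t w_t^{\top}]=\mu_t^{\top}\bar{w}_t^{\top}$ and $\mathbb{E}[w_t w_t^{\top}]=\Sigma_t^w$. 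Taking expectations in the dynamics gives the mean recursion $\mu_{t+1}=\mu_t\widehat{L}_t^{\top}+\bar{w}_t^{\top}\widehat{D}_t^{\top}=\mathcal{F}_{\widehat{K}_t}^{\mu}(\mu_t)+\widehat{\phi}_t^{\mu}$ with $\widehat{\phi}_t^{\mu}=\bar{w}_t^{\top}\widehat{D}_t^{\top}$; expanding $\widehat{x}_{t+1}\widehat{x}_{t+1}^{\top}$, taking expectations and using the factorization above for the cross term gives
\[
\Sigma_{t+1}=\widehat{L}_t\Sigma_t\widehat{L}_t^{\top}+\widehat{D}_t\bar{w}_t\mu_t\widehat{L}_t^{\top}+(\widehat{D}_t\bar{w}_t\mu_t\widehat{L}_t^{\top})^{\top}+\widehat{D}_t\Sigma_t^w\widehat{D}_t^{\top}=\mathcal{F}_{\widehat{K}_t}(\Sigma_t)+\widehat{\phi}_t,
\]
where $\widehat{\phi}_t$ is exactly the symmetric matrix that the shorthand $2\widehat{D}\bar{w}\mu(\widehat{A}-\widehat{B}\widehat{K})^{\top}+\widehat{D}\Sigma^w\widehat{D}^{\top}$ in the statement abbreviates (the factor $2$ standing for adding the transpose). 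Since the mean recursion is self-contained, one solves it first for $\mu_0,\dots,\mu_T$, which fixes the data-dependent matrices $\widehat{\phi}_t$ that enter the covariance recursion.

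Next I would solve both linear recursions by induction on $t$, written via the composed maps $\mathcal{G}_t,\mathcal{G}_t^{\mu},\mathcal{T}_{\widehat{K}},\mathcal{T}_{\widehat{K}}^{\mu}$ of the excerpt. With the convention that an empty product and $\mathcal{G}_{-1}$ both denote the identity, unrolling yields
\[
\Sigma_t=\mathcal{G}_{t-1}(\Sigma_0)+\sum_{i=1}^{t}\Bigl(\Pi_{s=i}^{t-1}\widehat{L}_s\Bigr)\widehat{\phi}_{i-1}\Bigl(\Pi_{s=i}^{t-1}\widehat{L}_s\Bigr)^{\top},\qquad \mu_t=\mathcal{G}_{t-1}^{\mu}(\mu_0)+\sum_{i=1}^{t}\widehat{\phi}_{i-1}^{\mu}\Bigl(\Pi_{s=i}^{t-1}\widehat{L}_s\Bigr)^{\top}.
\]
The inductive step is one application of $\mathcal{F}_{\widehat{K}_t}$ (respectively $\mathcal{F}_{\widehat{K}_t}^{\mu}$) to the previous expression plus the new $\widehat{\phi}_t$ (respectively $\widehat{\phi}_t^{\mu}$), and it regenerates the product structure recorded in (\ref{3.13}). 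Summing over $t=0,\dots,T$ and recalling $\Sigma_{\widehat{K}}=\sum_{t=0}^{T}\Sigma_t$ and $\mu_{\widehat{K}}=\sum_{t=0}^{T}\mu_t$, the noise-free contributions telescope to $\Sigma_0+\sum_{t=0}^{T-1}\mathcal{G}_t(\Sigma_0)=\mathcal{T}_{\widehat{K}}(\Sigma_0)$ and $\mu_0+\sum_{t=0}^{T-1}\mathcal{G}_t^{\mu}(\mu_0)=\mathcal{T}_{\widehat{K}}^{\mu}(\mu_0)$ by the definitions of those operators. It then remains only to match the accumulated double sums with $\phi(\widehat{K},\widehat{\phi})$ and $\phi^{\mu}(\widehat{K},\widehat{\phi}^{\mu})$.

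The last step is index bookkeeping: I would split $\sum_{t=0}^{T}\sum_{i=1}^{t}$ according to whether $i=t$ or $i<t$. The diagonal terms $i=t$ contribute $\sum_{t=1}^{T}\widehat{\phi}_{t-1}=\sum_{t=0}^{T-1}\widehat{\phi}_t$, the second group of the stated $\phi(\widehat{K},\widehat{\phi})$; for the off-diagonal terms the shift $t\mapsto t+1$ turns $\sum_{t=2}^{T}\sum_{i=1}^{t-1}(\Pi_{s=i}^{t-1}\widehat{L}_s)\widehat{\phi}_{i-1}(\Pi_{s=i}^{t-1}\widehat{L}_s)^{\top}$ into $\sum_{t=1}^{T-1}\sum_{i=1}^{t}(\Pi_{s=i}^{t}\widehat{L}_s)\widehat{\phi}_{i-1}(\Pi_{s=i}^{t}\widehat{L}_s)^{\top}$, the first group; the same manipulations with $\widehat{\phi}^{\mu}$ replacing $\widehat{\phi}$ yield $\phi^{\mu}(\widehat{K},\widehat{\phi}^{\mu})$. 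The hypothesis $T\ge 2$ is precisely what makes the off-diagonal double sum range over the nonempty set $1\le i\le t\le T-1$ claimed in the statement. I expect the only real care needed is fixing the product order of $\Pi_{s=i}^{t}\widehat{L}_s$ consistently with $\mathcal{G}_t$ in (\ref{3.13}) and propagating the transpose/symmetrization inside $\widehat{\phi}_t$ faithfully through the unrolling; the remainder is a routine induction and reindexing.
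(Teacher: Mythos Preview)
Your proposal is correct and follows essentially the same approach as the paper's own proof: establish the one-step recursions $\Sigma_{t+1}=\mathcal{F}_{\widehat{K}_t}(\Sigma_t)+\widehat{\phi}_t$ and $\mu_{t+1}=\mathcal{F}_{\widehat{K}_t}^{\mu}(\mu_t)+\widehat{\phi}_t^{\mu}$, unroll them by induction to get the explicit formula for $\Sigma_t$ (the paper's display (\ref{leC.5})), then sum over $t=0,\dots,T$ and reindex the double sum. Your treatment is in fact slightly more explicit than the paper's about the empty-product convention, the $i=t$ versus $i<t$ split, and the role of $T\ge 2$, but the structure of the argument is identical.
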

\begin{proof}
We only need to prove for $\Sigma_{\widehat{K}}$, while the the proof of $\mu_{\widehat{K}}$ follows the same way. Recall that $\Sigma_t=\mathbb{E}\left[\widehat{x}_t\widehat{x}_t^{\top}\right]$. Note that
$$
\begin{aligned}
\Sigma_1 &=\mathbb{E}[\widehat{x}_1\widehat{x}_1^{\top}]\\
&=\mathbb{E}\left[\left((\widehat{A}_0-\widehat{B}_0\widehat{K}_0) \widehat{x}_0+\widehat{D}_0w_0\right)\left((\widehat{A}_0-\widehat{B}_0\widehat{K}_0) \widehat{x}_0+\widehat{D}_0w_0\right)^{\top}\right]\\
&=(\widehat{A}_0-\widehat{B}_0\widehat{K}_0)\Sigma_0 (\widehat{A}_0-\widehat{B}_0\widehat{K}_0)^{\top}+ 2\widehat{D}_0\bar{w}_0\mu_0(\widehat{A}_0-\widehat{B}_0\widehat{K}_0)^{\top}+\widehat{D}_0\Sigma_0^w\widehat{D}_0^{\top} \\
& = \mathcal{G}_0(\Sigma_0) + \widehat{\phi}_0.
\end{aligned}
$$
Now we first prove that
\begin{equation}
\begin{aligned}
\Sigma_t&=\mathcal{G}_{t-1}(\Sigma_0)+ \sum_{i=1}^{t-1} \Pi_{s=i}^{t-1}(\widehat{A}_{s}-\widehat{B}_{s}\widehat{K}_{s})\widehat{\phi}_{i-1}\Pi_{s=i}^{t-1}(\widehat{A}_s-\widehat{B}_s\widehat{K}_s)^{\top}+\widehat{\phi}_{t-1}, \\
\end{aligned}
\label{leC.5}
\end{equation}
for $t=2,3, \cdots, T$. When $t=2$,
$$
\begin{aligned}
\Sigma_2&=\mathbb{E}[\widehat{x}_2\widehat{x}_2^{\top}]\\
&=\mathbb{E}\left[\left((\widehat{A}_1-\widehat{B}_1\widehat{K}_1) \widehat{x}_1+\widehat{D}_1 w_1\right)\left((\widehat{A}_1-\widehat{B}_1\widehat{K}_1) \widehat{x}_1+\widehat{D}_1 w_1\right)^{\top}\right]\\
&=(\widehat{A}_1-\widehat{B}_1\widehat{K}_1)\Sigma_1(\widehat{A}_1-\widehat{B}_1\widehat{K}_1)^{\top}+ 2\widehat{D}_1\bar{w}_1\mu_1(\widehat{A}_1-\widehat{B}_1\widehat{K}_1)^{\top}+\widehat{D}_1\Sigma_1^w\widehat{D}_1^{\top}\\
& = \mathcal{G}_1(\Sigma_0)+ (\widehat{A}_1-\widehat{B}_1\widehat{K}_1)\widehat{\phi}_0 (\widehat{A}_1-\widehat{B}_1\widehat{K}_1)^{\top} +\widehat{\phi}_1,
\end{aligned}
$$
which satisfies (\ref{leC.5}). Assume (\ref{leC.5}) holds for $t \leq k$. Then for $t=k+1$,
$$
\begin{aligned}
\mathbb{E}[\widehat{x}_{t+1}\widehat{x}_{t+1}^{\top}]&=\mathbb{E}\left[\left((\widehat{A}_t-\widehat{B}_t\widehat{K}_t) \widehat{x}_t+\widehat{D}_tw_t\right)\left((\widehat{A}_t-\widehat{B}_t\widehat{K}_t) \widehat{x}_t+\widehat{D}_t w_t\right)^{\top}\right]\\
&=(\widehat{A}_t-\widehat{B}_t\widehat{K}_t)\Sigma_t (\widehat{A}_t-\widehat{B}_t\widehat{K}_t)^{\top}+ 2\widehat{D}_t\bar{w}_t\mu_t(\widehat{A}_t-\widehat{B}_t\widehat{K}_t)^{\top}+\widehat{D}_t\Sigma_t^w\widehat{D}_t^{\top} \\
& = \mathcal{G}_t(\Sigma_0) + \sum_{i=1}^{t} \Pi_{s=i}^{t}(\widehat{A}_{s}-\widehat{B}_{s}\widehat{K}_{s})\widehat{\phi}_{i-1} \Pi_{s=i}^{t}(\widehat{A}_s-\widehat{B}_s\widehat{K}_s)^{\top}+\widehat{\phi}_t, \\
\end{aligned}
$$
Therefore (\ref{leC.5}) holds, $\forall t=1,2, \cdots, T$. Finally,
$$
\begin{aligned}
\Sigma_{\widehat{K}}&=\sum_{t=0}^T \Sigma_t \\
&=\Sigma_0+\sum_{t=0}^{T-1} \mathcal{G}_t\left(\Sigma_0\right)+ \sum_{t=1}^{T-1}\sum_{i=1}^{t} \Pi_{s=i}^{t}(\widehat{A}_{s}-\widehat{B}_{s}\widehat{K}_{s})\widehat{\phi}_{i-1}\Pi_{s=i}^{t}(\widehat{A}_s-\widehat{B}_s\widehat{K}_s)^{\top}+ \sum_{t=0}^{T-1} \widehat{\phi}_t \\
&=\mathcal{T}_{\widehat{K}}\left(\Sigma_0\right)+\phi(\widehat{K}, \widehat{\phi}).
\end{aligned}
$$
\end{proof}
Let
\begin{equation}
\rho=\max \left\{\max _{0 \leq t \leq T-1}\|\widehat{A}_t -\widehat{B}_t \widehat{K}_t\|, \max _{0 \leq t \leq T-1}\|\widehat{A}_t-\widehat{B}_t \widehat{K}_t^{\prime}\|, 1+\xi\right\}   
\label{3.16}
\end{equation}
for some small constant $\xi>0$. Then we have the following result on perturbations of $\Sigma_{\widehat{K}}$ and $\mu_{\widehat{K}}$. Lemma \ref{le3.12} and \ref{le3.13} establish Lipschitz continuity of the associated operators $\mathcal{F}_{\widehat{K}_t}$ (and its mean counterpart $\mathcal{F}_{\widehat{K}_t}^\mu$) as well as $\mathcal{G}_t$ (and $\mathcal{G}_t^\mu$). 
\begin{lemma}
It holds that, $\forall t=0,1, \cdots, T-1$,
\begin{equation}
\begin{aligned}
\|\mathcal{F}_{\widehat{K}_t}-\mathcal{F}_{\widehat{K}_t^{\prime}}\| &\leq 2\|\widehat{A}_t-\widehat{B}_t \widehat{K}_t\|\|\widehat{B}_t\|\|\widehat{K}_t-\widehat{K}_t^{\prime}\|+{\|\widehat{B}_t\|}^2 {\|\widehat{K}_t-\widehat{K}_t^{\prime}\|}^2,\\
\|\mathcal{F}_{\widehat{K}_t}^\mu-\mathcal{F}_{\widehat{K}_t^{\prime}}^\mu\| &\leq \|\widehat{B}_t\|\|\widehat{K}_t-\widehat{K}_t^{\prime}\|.
\end{aligned}
\end{equation}
\label{le3.12}
\end{lemma}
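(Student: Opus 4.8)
The plan is to reduce both estimates to the elementary identity
$(\widehat{A}_t-\widehat{B}_t\widehat{K}_t)-(\widehat{A}_t-\widehat{B}_t\widehat{K}_t')=\widehat{B}_t(\widehat{K}_t'-\widehat{K}_t)$,
so the closed-loop matrix depends affinely on the gain with ``slope'' controlled by $\|\widehat{B}_t\|$. Write $L_t=\widehat{A}_t-\widehat{B}_t\widehat{K}_t$, $L_t'=\widehat{A}_t-\widehat{B}_t\widehat{K}_t'$, and $\Delta=\widehat{K}_t-\widehat{K}_t'$; then $L_t'=L_t+\widehat{B}_t\Delta$ and $\|L_t-L_t'\|=\|\widehat{B}_t\Delta\|\le\|\widehat{B}_t\|\,\|\Delta\|$ by submultiplicativity.

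For the mean operator, $(\mathcal{F}_{\widehat{K}_t}^\mu-\mathcal{F}_{\widehat{K}_t'}^\mu)(Y)=Y(L_t-L_t')^{\top}=-Y\Delta^{\top}\widehat{B}_t^{\top}$, hence $\|(\mathcal{F}_{\widehat{K}_t}^\mu-\mathcal{F}_{\widehat{K}_t'}^\mu)(Y)\|\le\|Y\|\,\|\widehat{B}_t\|\,\|\Delta\|$; taking the supremum over $\|Y\|\le1$ in the induced operator norm yields the second inequality directly.

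For the covariance operator, I would expand $L_t'XL_t'^{\top}=(L_t+\widehat{B}_t\Delta)X(L_t+\widehat{B}_t\Delta)^{\top}$ and subtract $L_tXL_t^{\top}$, leaving the three cross terms $L_tX\Delta^{\top}\widehat{B}_t^{\top}$, $\widehat{B}_t\Delta XL_t^{\top}$, and $\widehat{B}_t\Delta X\Delta^{\top}\widehat{B}_t^{\top}$. Bounding each by submultiplicativity, the first two each contribute at most $\|L_t\|\,\|\widehat{B}_t\|\,\|\Delta\|\,\|X\|$ and the last contributes $\|\widehat{B}_t\|^2\|\Delta\|^2\|X\|$; summing and taking the supremum over $\|X\|\le1$ gives the first inequality. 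Since the operator norm is symmetric in its arguments, $\|\mathcal{F}_{\widehat{K}_t}-\mathcal{F}_{\widehat{K}_t'}\|=\|\mathcal{F}_{\widehat{K}_t'}-\mathcal{F}_{\widehat{K}_t}\|$, so the order of $\widehat{K}_t$ and $\widehat{K}_t'$ is immaterial.

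There is no real obstacle: the entire argument is the triangle inequality plus submultiplicativity once the affine dependence on the gain is isolated. The only point that needs care is consistency of norms — the bound is for the operator norm induced on linear maps by the ambient matrix norm ($\|\cdot\|_2$ or $\|\cdot\|_F$), and one should note that $\|ABC\|\le\|A\|\,\|B\|\,\|C\|$ and $\|\widehat{B}_t\Delta\|\le\|\widehat{B}_t\|\,\|\Delta\|$ hold for that choice (true for both the spectral and Frobenius norms) and are used with the same norm throughout.
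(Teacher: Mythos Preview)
Your proposal is correct and follows essentially the same approach as the paper: write the perturbed closed-loop matrix as $L_t'=L_t+\widehat{B}_t\Delta$, expand the quadratic (resp.\ linear) map, and bound the cross terms by submultiplicativity before taking the supremum over unit-norm inputs. The only cosmetic difference is that the paper combines the two symmetric cross terms into a single term with a factor of $2$ (valid since $X$ is symmetric), whereas you list them separately and observe they yield identical bounds.
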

\begin{proof}
Let $\Delta \widehat{K}_t=\widehat{K}_t-\widehat{K}_t^{\prime}$ and $\widehat{A}^{\mathrm{cl}}_t=\widehat{A}_t-\widehat{B}_t \widehat{K}_t$. Then
$$
\widehat{A}^{\prime \mathrm{cl}}_t=\widehat{A}_t-\widehat{B}_t \widehat{K}^{\prime}_t=\widehat{A}^{\mathrm{cl}}_t+\widehat{B}_t \Delta \widehat{K}_t
$$
Thus,
$$
\begin{aligned}
\mathcal{F}_{\widehat{K}^{\prime}}(X) & = (\widehat{A}^{\mathrm{cl}}_t+\widehat{B}_t \Delta \widehat{K}_t) X (\widehat{A}^{\mathrm{cl}}_t+\widehat{B}_t \Delta \widehat{K}_t)^{\top} \\
& =\widehat{A}^{\mathrm{cl}}_t X (\widehat{A}^{\mathrm{cl}})^{\top}_t + 2\widehat{A}^{\mathrm{cl}}_t X(\Delta \widehat{K}_t)^{\top} \widehat{B}^{\top}_t+ \widehat{B}_t \Delta \widehat{K}_t X (\Delta \widehat{K}_t)^{\top} \widehat{B}^{\top}_t.
\end{aligned}
$$
and
$$
\mathcal{F}_{\widehat{K}^{\prime}}^\mu(Y) = Y (\widehat{A}^{\mathrm{cl}}_t+\widehat{B}_t \Delta \widehat{K}_t)^{\top} =Y (\widehat{A}^{\mathrm{cl}})^{\top}_t + Y(\Delta \widehat{K}_t)^{\top} \widehat{B}^{\top}_t.
$$
The differences are
$$
\begin{aligned}
\mathcal{F}_{\widehat{K}}(X)-\mathcal{F}_{\widehat{K}^{\prime}}(X) &=-2\widehat{A}^{\mathrm{cl}}_t X (\Delta \widehat{K})^{\top}_t \widehat{B}^{\top}_t -\widehat{B}_t \Delta \widehat{K}_t X(\Delta \widehat{K}_t)^{\top} \widehat{B}^{\top}_t, \\
\mathcal{F}_{\widehat{K}}^\mu(Y)-\mathcal{F}_{\widehat{K}^{\prime}}^\mu(Y) &= - Y(\Delta \widehat{K}_t)^{\top} \widehat{B}^{\top}_t \\
\end{aligned}
$$
Take operator norms and use submultiplicativity $\|M N\| \leq\|M\|\|N\|$ :
$$
\begin{aligned}
\|\mathcal{F}_{\widehat{K}}(X)-\mathcal{F}_{\widehat{K}^{\prime}}(X)\| &\leq 2 \|\widehat{A}^{\mathrm{cl}}_t\|\|\widehat{B}_t\|\|\Delta \widehat{K}_t\|\|X\|+\|\widehat{B}_t\|^2\|\Delta \widehat{K}\|^2\|X\|, \\
\|\mathcal{F}_{\widehat{K}}^\mu(Y)-\mathcal{F}_{\widehat{K}^{\prime}}^\mu(Y)\| &\leq \|\widehat{B}_t\|\|\Delta \widehat{K}_t\|\|Y\|.
\end{aligned}
$$
Setting $\|X\|=1$ and $\|Y\|=1$ gives
$$
\begin{aligned}
\|\mathcal{F}_{\widehat{K}}-\mathcal{F}_{\widehat{K}^{\prime}}\| &\leq 2 \|\widehat{A}^{\mathrm{cl}}_t\|\|\widehat{B}_t\|\|\Delta \widehat{K}_t\|+\|\widehat{B}_t\|^2\|\Delta \widehat{K}_t\|^2, \\
\|\mathcal{F}_{\widehat{K}}^\mu-\mathcal{F}_{\widehat{K}^{\prime}}^\mu\| &\leq \|\widehat{B}_t\|\|\Delta \widehat{K}_t\|.
\end{aligned}
$$
i.e.,
$$
\begin{aligned}
\|\mathcal{F}_{\widehat{K}}-\mathcal{F}_{\widehat{K}^{\prime}}\| &\leq 2\|\widehat{A}_t-\widehat{B}_t \widehat{K}_t\|\|\widehat{B}_t\|\|\widehat{K}_t-\widehat{K}^{\prime}_t\|+\|\widehat{B}_t\|^2\|\widehat{K}_t-\widehat{K}^{\prime}_t\|^2,\\
\|\mathcal{F}_{\widehat{K}}^\mu-\mathcal{F}_{\widehat{K}^{\prime}}^\mu\| &\leq \|\widehat{B}_t\|\|\widehat{K}_t-\widehat{K}^{\prime}_t\|.
\end{aligned}
$$
\end{proof}
Recall the definition of $\mathcal{G}_t$ in (\ref{3.13}) associated with $K$, similarly let us define $\mathcal{G}_t^{\prime}=\mathcal{F}_{K_t^{\prime}} \circ \mathcal{F}_{K_{t-1}^{\prime}} \circ$ $\cdots \circ \mathcal{F}_{K_0^{\prime}}$ for policy $K^{\prime}$. Then we have the following perturbation analysis for $\mathcal{G}_t$.
\begin{lemma}\textup{(Perturbation Analysis for $\mathcal{G}_t$ and $\mathcal{G}_t^\mu$)} 
For any symmetric matrix $\Sigma \in \mathbb{R}^{2 \times 2}$, and matrix $\mu \in \mathbb{R}^{1 \times 2}$, we have that
$$
\begin{aligned}
\sum_{t=0}^{T-1}\left\|\left(\mathcal{G}_t-\mathcal{G}_t^{\prime}\right)(\Sigma)\right\| &\leq \frac{\rho^{2 T}-1}{\rho^2-1}\left(\sum_{t=0}^{T-1}\|\mathcal{F}_{\widehat{K}_t}-\mathcal{F}_{\widehat{K}_t^{\prime}}\|\right)\|\Sigma\|, \mbox{ and}\\
\sum_{t=0}^{T-1}\left\|\left(\mathcal{G}_t^\mu-(\mathcal{G}_t^{\mu})' \right)(\mu)\right\| &\leq \frac{\rho^{T}-1}{\rho-1}\left(\sum_{t=0}^{T-1}\|\mathcal{F}_{\widehat{K}_t}^\mu-\mathcal{F}_{\widehat{K}_t^{\prime}}^\mu\|\right)\|\mu\|,
\end{aligned}
$$
\label{le3.13}
\end{lemma}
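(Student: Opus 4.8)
The plan is to view each $\mathcal{G}_t=\mathcal{F}_{\widehat{K}_t}\circ\cdots\circ\mathcal{F}_{\widehat{K}_0}$ as an ordered product of the single-step operators analyzed in Lemma~\ref{le3.12} and to control the difference of two such products by a telescoping identity. Writing $\mathcal{G}_t^{\prime}=\mathcal{F}_{\widehat{K}_t^{\prime}}\circ\cdots\circ\mathcal{F}_{\widehat{K}_0^{\prime}}$ and inserting and cancelling intermediate partial products yields
$$
\mathcal{G}_t-\mathcal{G}_t^{\prime}=\sum_{j=0}^{t}\bigl(\mathcal{F}_{\widehat{K}_t}\circ\cdots\circ\mathcal{F}_{\widehat{K}_{j+1}}\bigr)\circ\bigl(\mathcal{F}_{\widehat{K}_j}-\mathcal{F}_{\widehat{K}_j^{\prime}}\bigr)\circ\bigl(\mathcal{F}_{\widehat{K}_{j-1}^{\prime}}\circ\cdots\circ\mathcal{F}_{\widehat{K}_0^{\prime}}\bigr),
$$
with the convention that an empty composition is the identity; each summand then carries exactly one ``difference factor'', with all factors to its left unprimed and all factors to its right primed. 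The identical identity with $\mathcal{F}^\mu$ in place of $\mathcal{F}$ handles $\mathcal{G}_t^\mu-(\mathcal{G}_t^\mu)^{\prime}$.

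Next I would invoke submultiplicativity of the operator norm together with a uniform per-factor bound. From $\mathcal{F}_{\widehat{K}_t}(X)=(\widehat{A}_t-\widehat{B}_t\widehat{K}_t)X(\widehat{A}_t-\widehat{B}_t\widehat{K}_t)^{\top}$ and the definition of $\rho$ in~\eqref{3.16} one gets $\|\mathcal{F}_{\widehat{K}_t}\|\le\|\widehat{A}_t-\widehat{B}_t\widehat{K}_t\|^2\le\rho^2$ and, identically, $\|\mathcal{F}_{\widehat{K}_t^{\prime}}\|\le\rho^2$; likewise $\|\mathcal{F}_{\widehat{K}_t}^\mu\|\le\rho$ and $\|\mathcal{F}_{\widehat{K}_t^{\prime}}^\mu\|\le\rho$. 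In the $j$-th summand the tail block has $t-j$ factors (norm at most $\rho^{2(t-j)}$) and the head block has $j$ factors (norm at most $\rho^{2j}$), and these combine to the common power $\rho^{2t}$ independently of $j$, so
$$
\bigl\|(\mathcal{G}_t-\mathcal{G}_t^{\prime})(\Sigma)\bigr\|\le\rho^{2t}\Bigl(\sum_{j=0}^{t}\|\mathcal{F}_{\widehat{K}_j}-\mathcal{F}_{\widehat{K}_j^{\prime}}\|\Bigr)\|\Sigma\|,
$$
and correspondingly $\bigl\|(\mathcal{G}_t^\mu-(\mathcal{G}_t^\mu)^{\prime})(\mu)\bigr\|\le\rho^{t}\bigl(\sum_{j=0}^{t}\|\mathcal{F}_{\widehat{K}_j}^\mu-\mathcal{F}_{\widehat{K}_j^{\prime}}^\mu\|\bigr)\|\mu\|$, since each mean factor costs only one power of $\rho$.

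Finally I would sum over $t=0,\dots,T-1$. Since every $\|\mathcal{F}_{\widehat{K}_j}-\mathcal{F}_{\widehat{K}_j^{\prime}}\|$ is nonnegative, $\sum_{j=0}^{t}\|\mathcal{F}_{\widehat{K}_j}-\mathcal{F}_{\widehat{K}_j^{\prime}}\|\le\sum_{j=0}^{T-1}\|\mathcal{F}_{\widehat{K}_j}-\mathcal{F}_{\widehat{K}_j^{\prime}}\|$ for every $t$, so this full sum factors out of the $t$-summation; and since $\rho\ge 1+\xi>1$, the remaining geometric series evaluates to $\sum_{t=0}^{T-1}\rho^{2t}=(\rho^{2T}-1)/(\rho^2-1)$, which is precisely the first claimed inequality. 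The mean case is word-for-word identical with $\rho^2$ replaced by $\rho$, giving $\sum_{t=0}^{T-1}\rho^{t}=(\rho^{T}-1)/(\rho-1)$. I expect the only genuinely delicate point to be the bookkeeping in the telescoping identity, specifically checking that the head- and tail-block powers of $\rho$ add up to $\rho^{2t}$ (respectively $\rho^{t}$) uniformly in $j$ and that the edge cases $j=0$ and $j=t$, where one block is empty, are handled by the identity convention; once that is in place the remainder is pure submultiplicativity plus a geometric sum, and the hypothesis $\rho>1$ guarantees the denominators $\rho^2-1$ and $\rho-1$ are positive so the closed forms are valid.
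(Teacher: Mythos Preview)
Your proof is correct and follows essentially the same approach as the paper: bound each single-step operator by $\rho^2$ (respectively $\rho$), split the difference of the composed operators via telescoping, and close with a geometric sum using $\rho>1$. The only cosmetic difference is that the paper writes the splitting as a one-step recursion $\|(\mathcal{G}_{t+1}-\mathcal{G}_{t+1}')(\Sigma)\|\le\rho^2\|(\mathcal{G}_t-\mathcal{G}_t')(\Sigma)\|+\rho^{2(t+1)}\|\mathcal{F}_{t+1}-\mathcal{F}_{t+1}'\|\|\Sigma\|$ and then sums, whereas you unroll this recursion into the full telescoping identity up front; both routes yield the same pointwise bound $\rho^{2t}\sum_{j\le t}\|\mathcal{F}_{\widehat{K}_j}-\mathcal{F}_{\widehat{K}_j'}\|$ before the final summation.
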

\begin{proof}
By direct calculation,
\begin{equation}
\|\mathcal{G}_t\| \leq \rho^{2(t+1)}, \quad \|\mathcal{G}_t^{\prime}\| \leq \rho^{2(t+1)} \quad \mbox{and} \quad \|\mathcal{G}_t^\mu\| \leq \rho^{t+1}, \quad \|(\mathcal{G}_t^{\mu})'\| \leq \rho^{t+1}.
\label{C.6}
\end{equation}
Denote $\mathcal{F}_t=\mathcal{F}_{\widehat{K}_t}$ and $\mathcal{F}_t^{\prime}=\mathcal{F}_{\widehat{K}_t^{\prime}}$ to ease the exposition. Then for any matrix $\Sigma \in \mathbb{R}^{2 \times 2}$ and $t \geq 0$,
$$
\begin{aligned}
\left\|\left(\mathcal{G}_{t+1}^{\prime}-\mathcal{G}_{t+1}\right)(\Sigma)\right\| & =\left\|\mathcal{F}_{t+1}^{\prime} \circ \mathcal{G}_t^{\prime}(\Sigma)-\mathcal{F}_{t+1} \circ \mathcal{G}_t(\Sigma)\right\| \\
& =\left\|\mathcal{F}_{t+1}^{\prime} \circ \mathcal{G}_t^{\prime}(\Sigma)-\mathcal{F}_{t+1}^{\prime} \circ \mathcal{G}_t(\Sigma)+\mathcal{F}_{t+1}^{\prime} \circ \mathcal{G}_t(\Sigma)-\mathcal{F}_{t+1} \circ \mathcal{G}_t(\Sigma)\right\| \\
& \leq\left\|\mathcal{F}_{t+1}^{\prime} \circ \mathcal{G}_t^{\prime}(\Sigma)-\mathcal{F}_{t+1}^{\prime} \circ \mathcal{G}_t(\Sigma)\right\|+\left\|\mathcal{F}_{t+1}^{\prime} \circ \mathcal{G}_t(\Sigma)-\mathcal{F}_{t+1} \circ \mathcal{G}_t(\Sigma)\right\| \\
& =\left\|\mathcal{F}_{t+1}^{\prime} \circ\left(\mathcal{G}_t^{\prime}-\mathcal{G}_t\right)(\Sigma)\right\|+\left\|\left(\mathcal{F}_{t+1}^{\prime}-\mathcal{F}_{t+1}\right) \circ \mathcal{G}_t(\Sigma)\right\| \\
& \leq\left\|\mathcal{F}_{t+1}^{\prime}\right\|\left\|\left(\mathcal{G}_t^{\prime}-\mathcal{G}_t\right)(\Sigma)\right\|+\left\|\mathcal{G}_t\right\|\left\|\mathcal{F}_{t+1}^{\prime}-\mathcal{F}_{t+1}\right\|\left\|\Sigma\right\| \\
& \leq \rho^2\left\|\left(\mathcal{G}_t^{\prime}-\mathcal{G}_t\right)(\Sigma)\right\|+\rho^{2(t+1)}\left\|\mathcal{F}_{t+1}^{\prime}-\mathcal{F}_{t+1}\right\|\left\|\Sigma\right\|.
\end{aligned}
$$
We define $\mathcal{F}_t^\mu=\mathcal{F}_{\widehat{K}_t}^\mu$ and $(\mathcal{F}_t^{\mu})'=(\mathcal{F}_{\widehat{K}_t}^\mu)'$ and can get similar inequality result for $\mu$. Therefore,
\begin{align}
\left\|\left(\mathcal{G}_{t+1}^{\prime}-\mathcal{G}_{t+1}\right)(\Sigma)\right\| &\leq \rho^2\left\|\left(\mathcal{G}_t^{\prime}-\mathcal{G}_t\right)(\Sigma)\right\|+\rho^{2(t+1)}\left\|\mathcal{F}_{t+1}^{\prime}-\mathcal{F}_{t+1}\right\|\left\|\Sigma\right\|, \label{C.7} \\
\left\|\left((\mathcal{G}_{t+1}^{\mu})'-\mathcal{G}_{t+1}^\mu\right)(\mu)\right\| &\leq \rho \left\|\left((\mathcal{G}_{t+1}^{\mu})'-\mathcal{G}_{t+1}^\mu\right)(\mu)\right\|+\rho^{t+1}\left\|(\mathcal{F}_{t+1}^{\mu})'-\mathcal{F}_{t+1}^\mu\right\|\left\|\mu\right\|, \label{C.7-1} 
\end{align}
Summing (\ref{C.7}) and (\ref{C.7-1}) up for $t \in\{1,2, \cdots, T-2\}$ with $\left\|\mathcal{G}_0^{\prime}-\mathcal{G}_0\right\|=\left\|\mathcal{F}_0^{\prime}-\mathcal{F}_0\right\|$ and $\left\|(\mathcal{G}_0^{\mu})'-\mathcal{G}_0^\mu\right\|=\left\|(\mathcal{F}_0^{\mu})'-\mathcal{F}_0^\mu\right\|$, we have
$$
\begin{aligned}
\sum_{t=0}^{T-1}\left\|\left(\mathcal{G}_t-\mathcal{G}_t^{\prime}\right)(\Sigma)\right\| &\leq \frac{\rho^{2 T}-1}{\rho^2-1}\left(\sum_{t=0}^{T-1}\|\mathcal{F}_t-\mathcal{F}_t^{\prime}\|\right)\left\|\Sigma\right\|, \\
\sum_{t=0}^{T-1}\left\|\left(\mathcal{G}_t^\mu-(\mathcal{G}_t^{\mu})' \right)(\mu)\right\| &\leq \frac{\rho^{T}-1}{\rho-1}\left(\sum_{t=0}^{T-1}\|\mathcal{F}_{\widehat{K}_t}^\mu-\mathcal{F}_{\widehat{K}_t^{\prime}}^\mu\|\right)\|\mu\|.
\end{aligned}
$$
\end{proof}
The following perturbation analysis on $\mathcal{T}$ follows immediately from Lemma \ref{le3.13}.
\begin{corollary}
For any symmetric matrix $\Sigma \in \mathbb{R}^{(m+1) \times (m+1)}$ and matrix $\mu \in \mathbb{R}^{1\times (m+1)}$, we have
$$
\begin{aligned}
\|(\mathcal{T}_{\widehat{K}}-\mathcal{T}_{\widehat{K}^{\prime}})(\Sigma)\| &\leq \frac{\rho^{2 T}-1}{\rho^2-1}\left(\sum_{t=0}^{T-1}\|\mathcal{F}_{\widehat{K}_t}-\mathcal{F}_{\widehat{K}_t^{\prime}}\|\right)\left\|\Sigma\right\|, \\
\|(\mathcal{T}_{\widehat{K}}^\mu-\mathcal{T}_{\widehat{K}^{\prime}}^\mu)(\mu)\| &\leq \frac{\rho^{T}-1}{\rho-1}\left(\sum_{t=0}^{T-1}\|\mathcal{F}_{\widehat{K}_t}^\mu-\mathcal{F}_{\widehat{K}_t^{\prime}}^\mu\|\right)\left\|\mu\right\|.
\end{aligned}
$$
where $\rho$ is defined in (\ref{3.16}).
\label{co3.14}
\end{corollary}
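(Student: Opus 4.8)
The plan is to reduce the claim to Lemma~\ref{le3.13} by a one-line algebraic cancellation followed by the triangle inequality; there is no substantive obstacle here, since the corollary is essentially a bookkeeping consequence of the representations in \eqref{3.14}.

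First I would write out $\mathcal{T}_{\widehat{K}}$ and $\mathcal{T}_{\widehat{K}'}$ using \eqref{3.14}, namely $\mathcal{T}_{\widehat{K}}(\Sigma)=\Sigma+\sum_{t=0}^{T-1}\mathcal{G}_t(\Sigma)$ and $\mathcal{T}_{\widehat{K}'}(\Sigma)=\Sigma+\sum_{t=0}^{T-1}\mathcal{G}_t'(\Sigma)$, and subtract. The identity terms $\Sigma$ cancel, leaving $(\mathcal{T}_{\widehat{K}}-\mathcal{T}_{\widehat{K}'})(\Sigma)=\sum_{t=0}^{T-1}(\mathcal{G}_t-\mathcal{G}_t')(\Sigma)$. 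Taking norms and applying the triangle inequality gives $\|(\mathcal{T}_{\widehat{K}}-\mathcal{T}_{\widehat{K}'})(\Sigma)\|\le\sum_{t=0}^{T-1}\|(\mathcal{G}_t-\mathcal{G}_t')(\Sigma)\|$. Now I would invoke the first inequality of Lemma~\ref{le3.13} to bound the right-hand side by $\tfrac{\rho^{2T}-1}{\rho^2-1}\big(\sum_{t=0}^{T-1}\|\mathcal{F}_{\widehat{K}_t}-\mathcal{F}_{\widehat{K}_t'}\|\big)\|\Sigma\|$, which is exactly the asserted bound.

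For the mean operator the argument is identical: subtracting the two instances of $\mathcal{T}^\mu_{\widehat{K}}(Y)=Y+\sum_{t=0}^{T-1}\mathcal{G}_t^\mu(Y)$ from \eqref{3.14} cancels the identity part, the triangle inequality yields $\|(\mathcal{T}^\mu_{\widehat{K}}-\mathcal{T}^\mu_{\widehat{K}'})(\mu)\|\le\sum_{t=0}^{T-1}\|(\mathcal{G}_t^\mu-(\mathcal{G}_t^\mu)')(\mu)\|$, and the second inequality of Lemma~\ref{le3.13} closes the estimate with the factor $\tfrac{\rho^{T}-1}{\rho-1}$ (note the exponent is $T$ rather than $2T$ because $\mathcal{F}^\mu$ involves only a single copy of $\widehat{A}_t-\widehat{B}_t\widehat{K}_t$, so $\|\mathcal{G}_t^\mu\|\le\rho^{t+1}$ rather than $\rho^{2(t+1)}$, as recorded in \eqref{C.6}). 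The only point worth a sentence of care is the dimension label: Lemma~\ref{le3.13} is stated for $2\times 2$ matrices, and the corollary for $(m+1)\times(m+1)$ matrices, so I would remark that the chain of estimates in the proof of Lemma~\ref{le3.13} never uses the size of the matrix and therefore carries over verbatim. Beyond that there is nothing hard; the result is immediate.
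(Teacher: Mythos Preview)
Your proposal is correct and matches the paper's approach exactly: the paper simply states that the corollary ``follows immediately from Lemma~\ref{le3.13},'' which is precisely the cancellation-plus-triangle-inequality argument you spell out. Your remark about the dimension mismatch between Lemma~\ref{le3.13} ($2\times 2$) and the corollary ($(m+1)\times(m+1)$) is a fair catch of a cosmetic inconsistency in the paper, and your observation that the proof is dimension-agnostic is the right resolution.
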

Now we are ready to present the Lemma \ref{le3.10}.
\begin{lemma}{\textup{(Perturbation Analysis of $\Sigma_{\widehat{K}}$ and $\mu_{\widehat{K}}$)}}.
We have
$$
\begin{aligned}
\|\Sigma_{\widehat{K}}-\Sigma_{\widehat{K}^{\prime}}\| & \leq\left\|\left(\mathcal{T}_{\widehat{K}}-\mathcal{T}_{\widehat{K}^{\prime}}\right)\left(\Sigma_0\right)\right\|+ \|\phi(\widehat{K}, \widehat{\phi}) -  \phi(\widehat{K}', \widehat{\phi}) \| \\
& \leq \frac{\rho^{2 T}-1}{\rho^2-1} \left(\frac{C(\widehat{K})}{\underline{\sigma}_{{Q}}}+  T(2\rho \|\widehat{D}\| \|\bar{w}\| \| \mu\|+ \|\widehat{D}\|^2 \|\Sigma^w\|)\right)\\
&\quad \cdot \left(2 \rho \|\widehat{B}\| \ |||\mathbf{{\widehat{K}}-{\widehat{K}^{\prime}}}||| + \|\widehat{B}\|^2 \ |||\mathbf{{\widehat{K}}-{\widehat{K}^{\prime}}}|||^2\right) + 2\|\widehat{D}\| \|\bar{w}\| \|\mu\| \ |||\mathbf{{\widehat{K}}-{\widehat{K}^{\prime}}}|||, \mbox{ and }\\
\|\mu_{\widehat{K}}-\mu_{\widehat{K}^{\prime}}\| & \leq\|(\mathcal{T}_{\widehat{K}}^\mu-\mathcal{\widehat{T}}_{K^{\prime}}^\mu)\left(\mu_0\right)\|+ \|\phi(\widehat{K}, \widehat{\phi}^\mu) -  \phi(\widehat{K}', \widehat{\phi}^\mu) \| \\
& \leq \frac{\rho^{T}-1}{\rho-1} \left(\sqrt{\frac{TC(\widehat{K})}{\underline{\sigma}_{{Q}}}}+  T\|\widehat{D}\| \|\bar{w}\|\right)\|\widehat{B}\| \ |||\mathbf{{\widehat{K}}-{\widehat{K}^{\prime}}}|||.
\end{aligned}
$$
\label{le3.10}
where $\|\widehat{B}\| = \max_t\{\|\widehat{B}_t\|\}$ and the same definition for $\|\widehat{D}\|, \|\bar{w}\|, \|\mu\|$ and $\|\Sigma^w\|$.
\end{lemma}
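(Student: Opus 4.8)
The plan is to build on the additive decomposition of Proposition \ref{prop3.9}: $\Sigma_{\widehat{K}}=\mathcal{T}_{\widehat{K}}(\Sigma_0)+\phi(\widehat{K},\widehat{\phi})$ and $\mu_{\widehat{K}}=\mathcal{T}_{\widehat{K}}^\mu(\mu_0)+\phi^\mu(\widehat{K},\widehat{\phi}^\mu)$, together with the analogous identities for $\widehat{K}'$. Since $\widehat{x}_0$ — hence $\Sigma_0$ and $\mu_0$ — is common to both policies, subtracting these representations and applying the triangle inequality yields the first displayed inequality,
$$
\|\Sigma_{\widehat{K}}-\Sigma_{\widehat{K}'}\|\le\|(\mathcal{T}_{\widehat{K}}-\mathcal{T}_{\widehat{K}'})(\Sigma_0)\|+\|\phi(\widehat{K},\widehat{\phi})-\phi(\widehat{K}',\widehat{\phi})\|,
$$
and its $\mu$-counterpart. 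It then remains to bound the two terms on the right.

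For the operator term I would invoke Corollary \ref{co3.14} to get $\|(\mathcal{T}_{\widehat{K}}-\mathcal{T}_{\widehat{K}'})(\Sigma_0)\|\le\frac{\rho^{2T}-1}{\rho^2-1}\left(\sum_{t}\|\mathcal{F}_{\widehat{K}_t}-\mathcal{F}_{\widehat{K}'_t}\|\right)\|\Sigma_0\|$, and then feed in $\|\Sigma_0\|\le\|\Sigma_{\widehat{K}}\|\le C(\widehat{K})/\underline{\sigma}_{Q}$ from Lemma \ref{le3.8} (using $\Sigma_0\preceq\Sigma_{\widehat{K}}$ since each $\Sigma_t\succeq 0$) and, from Lemma \ref{le3.12} summed over $t$ with $\|\widehat{A}_t-\widehat{B}_t\widehat{K}_t\|\le\rho$ and $\sum_t a_t^2\le(\sum_t a_t)^2$, the estimate $\sum_{t}\|\mathcal{F}_{\widehat{K}_t}-\mathcal{F}_{\widehat{K}'_t}\|\le 2\rho\|\widehat{B}\|\,|||\widehat{K}-\widehat{K}'|||+\|\widehat{B}\|^2\,|||\widehat{K}-\widehat{K}'|||^2$. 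The same three steps, run with the single-copy operators $\mathcal{T}^\mu,\mathcal{F}^\mu$ (whence the geometric factor $\frac{\rho^T-1}{\rho-1}$), the size bound $\|\mu_0\|\le\sqrt{TC(\widehat{K})/\underline{\sigma}_{Q}}$ from Lemma \ref{le3.8}, and the linear Lipschitz estimate $\|\mathcal{F}_{\widehat{K}_t}^\mu-\mathcal{F}_{\widehat{K}'_t}^\mu\|\le\|\widehat{B}_t\|\|\widehat{K}_t-\widehat{K}'_t\|$, dispose of the operator term in the $\mu$-bound.

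For the $\phi$ term I would expand $\phi(\widehat{K},\widehat{\phi})-\phi(\widehat{K}',\widehat{\phi})$ along the nested sum $\sum_{t=1}^{T-1}\sum_{i=1}^{t}$. For each fixed $i$, the block $\sum_{t=i}^{T-1}\Pi_{s=i}^{t}(\widehat{A}_s-\widehat{B}_s\widehat{K}_s)\,\widehat{\phi}_{i-1}\,\Pi_{s=i}^{t}(\widehat{A}_s-\widehat{B}_s\widehat{K}_s)^{\top}$ has exactly the form of $\mathcal{T}_{\widehat{K}}$ acting on $\widehat{\phi}_{i-1}$ over the window $[i,T-1]$, so the telescoping argument behind Lemma \ref{le3.13}/Corollary \ref{co3.14} runs verbatim and contributes $\frac{\rho^{2T}-1}{\rho^2-1}\left(\sum_{s}\|\mathcal{F}_{\widehat{K}_s}-\mathcal{F}_{\widehat{K}'_s}\|\right)\|\widehat{\phi}_{i-1}\|$; summing over $i$ replaces $\sum_i\|\widehat{\phi}_{i-1}\|$ by $T\left(2\rho\|\widehat{D}\|\|\bar{w}\|\|\mu\|+\|\widehat{D}\|^2\|\Sigma^w\|\right)$, which follows from the termwise bound $\|\widehat{\phi}_t\|\le 2\|\widehat{D}_t\|\|\bar{w}_t\|\|\mu_t\|\|\widehat{A}_t-\widehat{B}_t\widehat{K}_t\|+\|\widehat{D}_t\|^2\|\Sigma_t^w\|$ and $\|\widehat{A}_t-\widehat{B}_t\widehat{K}_t\|\le\rho$. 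Merging with the operator term gives the common prefactor $\frac{\rho^{2T}-1}{\rho^2-1}\left(C(\widehat{K})/\underline{\sigma}_{Q}+T(\cdots)\right)\left(2\rho\|\widehat{B}\|\,|||\widehat{K}-\widehat{K}'|||+\|\widehat{B}\|^2\,|||\widehat{K}-\widehat{K}'|||^2\right)$; the leftover additive term $2\|\widehat{D}\|\|\bar{w}\|\|\mu\|\,|||\widehat{K}-\widehat{K}'|||$ is produced by the remaining part of the expansion, where $\widehat{\phi}_t$ itself depends on the policy through the factor $(\widehat{A}_t-\widehat{B}_t\widehat{K}_t)^{\top}$ in the standalone sum $\sum_t\widehat{\phi}_t$, and is handled by a direct termwise estimate. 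For $\mu_{\widehat{K}}$ the drift matrix $\widehat{\phi}^\mu=\bar{w}^{\top}\widehat{D}^{\top}$ carries no policy dependence, so that last step is vacuous, which is why the $\mu$-bound has the cleaner stated form.

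\textbf{Main obstacle.} The delicate part is the bookkeeping inside the $\phi$ term: one must separate the copies of $\widehat{K}$ sitting in the conjugating products $\Pi_{s=i}^{t}(\widehat{A}_s-\widehat{B}_s\widehat{K}_s)$ from those inside the drift matrices $\widehat{\phi}_{i-1}$ (and inside $\mu_{i-1}$), apply the telescoping bound of Lemma \ref{le3.13} uniformly over all windows $[i,T-1]$, and verify that the geometric factors collapse to the single prefactor $\frac{\rho^{2T}-1}{\rho^2-1}$ (respectively $\frac{\rho^T-1}{\rho-1}$). Once this organization is in place, the individual estimates — submultiplicativity of the operator norm, the $\rho$-bound on closed-loop maps, and the $C(\widehat{K})$-bounds of Lemma \ref{le3.8} — are routine.
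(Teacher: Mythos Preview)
Your proposal is correct and follows essentially the same approach as the paper: decompose via Proposition~\ref{prop3.9}, bound the operator piece by Corollary~\ref{co3.14} with $\|\Sigma_0\|,\|\mu_0\|$ controlled through Lemma~\ref{le3.8}, feed in the summed Lipschitz estimates of Lemma~\ref{le3.12}, and handle the $\phi$ piece by rerunning the telescoping of Lemma~\ref{le3.13} on each window together with the termwise bound $\|\widehat{\phi}_t\|\le 2\rho\|\widehat{D}\|\|\bar{w}\|\|\mu\|+\|\widehat{D}\|^2\|\Sigma^w\|$. Your identification of the residual additive term $2\|\widehat{D}\|\|\bar{w}\|\|\mu\|\,|||\widehat{K}-\widehat{K}'|||$ as arising from the explicit $\widehat{K}$-dependence of $\widehat{\phi}_t$ in the standalone sum $\sum_t\widehat{\phi}_t$, and your observation that this contribution vanishes for $\mu_{\widehat{K}}$ because $\widehat{\phi}^\mu$ is policy-independent, match the paper's accounting exactly.
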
 
\begin{proof}
Using Lemma \ref{le3.12},
$$
\begin{aligned}
\sum_{t=0}^{T-1}\|\mathcal{F}_{\widehat{K}_t}-\mathcal{F}_{\widehat{K}_t^{\prime}}\| & =\sum_{t=0}^{T-1}\left(2\|\widehat{A}_t-\widehat{B}_t \widehat{K}_t\|\|\widehat{B}_t\|\|\widehat{K}_t-\widehat{K}_t^{\prime}\|+\|\widehat{B}_t\|^2 \|\widehat{K}_t-\widehat{K}_t^{\prime}\|^2 \right) \\
& \leq 2 \rho  \|\widehat{B}\| \sum_{t=0}^{T-1} \|\widehat{K}_t- \widehat{K}_t^{\prime}\|+  \|\widehat{B}\|^2\sum_{t=0}^{T-1}  \|\widehat{K}_t-\widehat{K}_t^{\prime}\|^2, \mbox{ and } \\
\sum_{t=0}^{T-1}\|\mathcal{F}_{\widehat{K}_t}^\mu-\mathcal{F}_{\widehat{K}_t^{\prime}}^\mu\| & =\sum_{t=0}^{T-1}\left(\|\widehat{B}_t\|\|\widehat{K}_t-\widehat{K}_t^{\prime}\|\right) \leq \|\widehat{B}\| \sum_{t=0}^{T-1} \|\widehat{K}_t- \widehat{K}_t^{\prime}\|.
\end{aligned}
$$
In the same way as for the proof of Lemma \ref{le3.13}, for $t=1, \cdots, T-1$, we have
\begin{equation}
\begin{aligned}
&\ \quad \sum_{i=1}^t\left\| \Pi_{s=i}^{t}(\widehat{A}_{s}-\widehat{B}_{s}\widehat{K}_{s})\widehat{\phi}_{i-1}\Pi_{s=i}^{t}(\widehat{A}_s-\widehat{B}_s\widehat{K}_s)^{\top} - \Pi_{s=i}^{t}(\widehat{A}_{s}-\widehat{B}_{s}\widehat{K}'_{s})\widehat{\phi}_{i-1}\Pi_{s=i}^{t}(\widehat{A}_s-\widehat{B}_s\widehat{K}'_s)^{\top}\right\| \\
&\leq \frac{\rho^{2 T}-1}{\rho^2-1}\left(\sum_{i=0}^t\|\mathcal{F}_{\widehat{K}_i}-\mathcal{F}_{\widehat{K}_i^{\prime}}\|\|\widehat{\phi}_{i-1}\|\right) \\
& \leq \frac{\rho^{2 T}-1}{\rho^2-1}\left(\sum_{i=0}^t\|\mathcal{F}_{\widehat{K}_i}-\mathcal{F}_{\widehat{K}_i^{\prime}}\|\right)(2\rho \|\widehat{D}\| \|\bar{w}\| \| \mu\|+ \|\widehat{D}\|^2 \|\Sigma^w\|), \mbox{ and } \\
&\ \quad \sum_{i=1}^t\left\|\widehat{\phi}_{i-1}^\mu \Pi_{s=i}^{t}(\widehat{A}_s-\widehat{B}_s\widehat{K}_s)^{\top} - \widehat{\phi}_{i-1}^\mu\Pi_{s=i}^{t}(\widehat{A}_s-\widehat{B}_s\widehat{K}'_s)^{\top}\right\| \\
&\leq \frac{\rho^{T}-1}{\rho-1}\left(\sum_{i=0}^t\|\mathcal{F}_{\widehat{K}_i}-\mathcal{F}_{\widehat{K}_i^{\prime}}\|\|\widehat{\phi}_{i-1}^\mu\|\right) \\
& \leq \frac{\rho^{T}-1}{\rho-1}\left(\sum_{i=0}^t\|\mathcal{F}_{\widehat{K}_i}-\mathcal{F}_{\widehat{K}_i^{\prime}}\|\right)\|\widehat{D}\| \|\bar{w}\|, 
\end{aligned}
\label{pg-eq3.20}
\end{equation}
where we define $\widehat{\phi}_{i-1}= \mathbf{0}_{(m+1) \times (m+1)}$ and $\widehat{\phi}_{i-1}^\mu= \mathbf{0}_{1\times (m+1)}$ when $i=0$. By Proposition \ref{prop3.9}, Corollary \ref{co3.14}, (\ref{3.14}) and (\ref{pg-eq3.20}), we have
\begin{equation}
\begin{aligned}
\|\Sigma_{\widehat{K}}-\Sigma_{\widehat{K}^{\prime}}\| & \leq\left\|(\mathcal{T}_{\widehat{K}}-\mathcal{\widehat{T}}_{K^{\prime}})\left(\Sigma_0\right)\right\|+ \|\phi(\widehat{K}, \widehat{\phi}) -  \phi(\widehat{K}', \widehat{\phi}) \| \\
& \leq \frac{\rho^{2 T}-1}{\rho^2-1}\left(\sum_{t=0}^{T-1}\|\mathcal{F}_{\widehat{K}_t}-\mathcal{F}_{\widehat{K}_t^{\prime}}\| \right) \left(\|\Sigma_0\| + T(2\rho \|\widehat{D}\| \|\bar{w}\| \| \mu\|+ \|\widehat{D}\|^2 \|\Sigma^w\|)\right) \\
&\quad + \sum_{t=0}^{T-1}2\|\widehat{D}\| \|\bar{w}\| \|\mu\| \|\widehat{K}_t - \widehat{K}'_t\| \\
& \leq \frac{\rho^{2 T}-1}{\rho^2-1} \left(\frac{C(K)}{\underline{\sigma}_{{Q}}}+  T(2\rho \|\widehat{D}\| \|\bar{w}\| \| \mu\|+ \|\widehat{D}\|^2 \|\Sigma^w\|)\right)\\
&\quad \cdot \left(2 \rho \|\widehat{B}\| \ |||\mathbf{{\widehat{K}}-{\widehat{K}^{\prime}}}||| + \|\widehat{B}\|^2 \ |||\mathbf{{\widehat{K}}-{\widehat{K}^{\prime}}}|||^2\right) + 2\|\widehat{D}\| \|\bar{w}\| \|\mu\| \ |||\mathbf{{\widehat{K}}-{\widehat{K}^{\prime}}}|||, \\
\|\mu_{\widehat{K}}-\mu_{\widehat{K}^{\prime}}\| & \leq\|(\mathcal{T}_{\widehat{K}}^\mu-\mathcal{\widehat{T}}_{K^{\prime}}^\mu)\left(\mu_0\right)\|+ \|\phi(\widehat{K}, \widehat{\phi}^\mu) -  \phi(\widehat{K}', \widehat{\phi}^\mu) \| \\
& \leq \frac{\rho^{T}-1}{\rho-1}\left(\sum_{t=0}^{T-1}\|\mathcal{F}_{\widehat{K}_t}^\mu-\mathcal{F}_{\widehat{K}_t^{\prime}}^\mu\| \right) \left(\|\Sigma_0\| + T\|\|\widehat{D}\|\bar{w}\| \right) \\
& \leq \frac{\rho^{T}-1}{\rho-1} \left(\sqrt{\frac{TC(\widehat{K})}{\underline{\sigma}_{{Q}}}}+  T\|\widehat{D}\| \|\bar{w}\|\right)\|\widehat{B}\| \ |||\mathbf{{\widehat{K}}-{\widehat{K}^{\prime}}}|||.
\end{aligned}
\label{3.21}
\end{equation}
The last inequalities for both holds since $\|\Sigma_0\| \leq\|\Sigma_{\widehat{K}}\| \leq \frac{C(\widehat{K})}{\underline{\sigma}_{{Q}}}$ and $\|\mu_0\| \leq\|\mu_{\widehat{K}}\| \leq \sqrt{\frac{TC(\widehat{K})}{\underline{\sigma}_{{Q}}}}$ by Lemma \ref{le3.8}.
\end{proof}

\subsection{Convergence and Complexity Guarantees}
We can provide global convergence Theorem \ref{th:3.3} after two preliminary Lemmas about admissible step-size conditions and properties for the cumulation of cost sequence gradient and  policy $\widehat{K}$. 
\begin{lemma}
We have 
\begin{equation}
\widehat{K}_t^{\prime}=\widehat{K}_t-\eta \nabla_t C(\widehat{K}),
\label{3.22}
\end{equation}
where
\begin{equation}
\eta \leq \min \left\{\frac{(\rho^2-1)(\rho-1) \underline{\sigma}_{{Q}} \sqrt{\underline{\sigma}_{{Q}}} \underline{\sigma}_{{X}}}{2 T(Dn_1+Dn_2)\|\widehat{B}\| \max _t\{\|\nabla_t C(\widehat{K})\|\}},\  \frac{1}{2 C_1}\right\}, 
\label{3.23}
\end{equation}
with
\begin{equation}
\begin{aligned}
Dn_1 & = \left(C(\widehat{K})+ \underline{\sigma}_{{Q}}T(2\rho \|\widehat{D}\| \|\bar{w}\| \| \mu\|+ \|\widehat{D}\|^2 \|\Sigma^w\|)\right)(2\rho+1)(\rho^{2T}-1)(\rho-1)\sqrt{\underline{\sigma}_{{Q}}},\\
Dn_2 & = \left(\sqrt{TC(\widehat{K})} + \sqrt{\underline{\sigma}_{{Q}}}T \|\widehat{D}\|\|\bar{w}\|\right)(\rho^T-1)(\rho^2-1) \underline{\sigma}_{{Q}}, \\
C_1&=\left(\frac{C(\widehat{K})}{\underline{\sigma}_{{Q}}}+  T(2\rho \|\widehat{D}\| \|\bar{w}\| \| \mu\|+ \|\widehat{D}\|^2 \|\Sigma^w\|)\right)
\left(\frac{(2 \rho+1)\|\widehat{B}\|(\rho^{2 T}-1)}{\left(\rho^2-1\right) \underline{\sigma}_{{X}}} \sum_{t=0}^{T-1}\|\nabla_t C(\widehat{K})\|\right)\\
&\quad +\frac{2\|\widehat{D}\| \|\bar{w}\| \|\mu\|}{\underline{\sigma}_{{X}}} \sum_{t=0}^{T-1}\|\nabla_t C(\widehat{K})\| + \left(\sqrt{\frac{TC(\widehat{K})}{\underline{\sigma}_Q}}+T \|\widehat{D}\|\|\bar{w}\| \right)\left(\frac{\|\widehat{B}\|(\rho^T-1)}{(\rho-1)\underline{\sigma}_{{X}}}\sum_{t=0}^{T-1}\|\nabla_t C(\widehat{K})\|  \right)  \\
&\quad +\frac{2 C(\widehat{K})}{\underline{\sigma}_{{Q}}} \sum_{t=0}^{T-1}\|R_t+\widehat{B}_{t}^{\top} \widehat{P}_{t+1} \widehat{B}_t \|.  
\end{aligned}
\label{3.24}
\end{equation}
Then we have
$$
C(\widehat{K}^{\prime})-C(\widehat{K}^*) \leq\left(1-2 \eta \underline{\sigma}_{{R}} \frac{\underline{\sigma}_{{X}}^2}{\|\Sigma_{\widehat{K}^*}\|+1}\right)\left(C(\widehat{K})-C(\widehat{K}^*)\right).
$$
\label{le3.15}
\end{lemma}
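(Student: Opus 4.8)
The plan is to run the finite-horizon, affine version of the one-step progress argument of \cite{fazel2018global, hambly2021policy}, assembling the exact ``almost smoothness'' identity (Lemma~\ref{le3.7}), the gradient formula (Lemma~\ref{le3.5}), the a~priori bounds (Lemma~\ref{le3.8}), the state-statistics perturbation bounds (Lemma~\ref{le3.10}), and gradient domination (Lemma~\ref{le3.6}). First I would set $\Delta_t:=\widehat{K}_t'-\widehat{K}_t=-\eta\,\nabla_t C(\widehat{K})$ and substitute into Lemma~\ref{le3.7} (in finite horizon every $\widehat{K}$ has finite cost, so this applies unconditionally): the three groups of terms become $-2\eta\operatorname{Tr}(\Sigma_t'\nabla_t C^{\top}E_t)$, $+\eta^{2}\operatorname{Tr}(\Sigma_t'\nabla_t C^{\top}(R_t+\widehat{B}_t^{\top}\widehat{P}_{t+1}\widehat{B}_t)\nabla_t C)$ and $+2\eta\,\mu_t'\nabla_t C^{\top}F_t$. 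Then I would split $\Sigma_t'=\Sigma_t+(\Sigma_t'-\Sigma_t)$, $\mu_t'=\mu_t+(\mu_t'-\mu_t)$; for the $(\Sigma_t,\mu_t)$ part of the two linear terms, Lemma~\ref{le3.5} ($\nabla_t C=2E_t\Sigma_t-2F_t\mu_t$) yields $-2\eta[\operatorname{Tr}(\Sigma_t\nabla_t C^{\top}E_t)-\mu_t\nabla_t C^{\top}F_t]=-\eta\operatorname{Tr}(\nabla_t C^{\top}\nabla_t C)$, since $\operatorname{Tr}(\nabla_t C^{\top}\nabla_t C)=2\operatorname{Tr}(\Sigma_t\nabla_t C^{\top}E_t)-2\mu_t\nabla_t C^{\top}F_t$ by cyclicity. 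Hence $C(\widehat{K}')-C(\widehat{K})=-\eta\sum_{t=0}^{T-1}\operatorname{Tr}(\nabla_t C^{\top}\nabla_t C)+\mathcal{R}$, where the residual $\mathcal{R}$ collects the $(\Sigma_t'-\Sigma_t,\ \mu_t'-\mu_t)$ contributions to the linear terms together with the $\eta^{2}$ quadratic term.

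The aim of the second step is $|\mathcal{R}|\le\tfrac{\eta}{2}\sum_t\operatorname{Tr}(\nabla_t C^{\top}\nabla_t C)$. For the quadratic term I would use $\operatorname{Tr}(\Sigma_t' M)\le\|\Sigma_t'\|\operatorname{Tr}(M)$ with $M=\nabla_t C^{\top}(R_t+\widehat{B}_t^{\top}\widehat{P}_{t+1}\widehat{B}_t)\nabla_t C\succeq0$, bounding $\|\widehat{P}_{t+1}\|\le C(\widehat{K})/\underline{\sigma}_{X}$ and $\|\Sigma_t\|\le C(\widehat{K})/\underline{\sigma}_{Q}$ by Lemma~\ref{le3.8} and $\|\Sigma_t'-\Sigma_t\|\le\|\Sigma_{\widehat{K}}-\Sigma_{\widehat{K}'}\|\le C(\widehat{K})/\underline{\sigma}_{Q}$ by Lemma~\ref{le3.10} under the first branch of \eqref{3.23} (so $\|\Sigma_t'\|\le 2C(\widehat{K})/\underline{\sigma}_{Q}$), then summing via $\sum_t a_tb_t\le(\sum_t a_t)(\sum_t b_t)$ for nonnegatives. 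For the linear residual I would exploit the augmentation: since the last coordinate of $\widehat{x}_t$ is identically $1$, $\mu_t$ is the last row of $\Sigma_t$, hence $\nabla_t C=2\widehat{E}_t\Sigma_t$ with $\widehat{E}_t:=E_t-F_t\mathbf{e}^{\top}$ ($\mathbf{e}\in\mathbb{R}^{m+1}$ the last standard basis vector), so $\|\widehat{E}_t\|_F\le\|\nabla_t C\|_F/(2\underline{\sigma}_{X})$; pairing this with the Lemma~\ref{le3.10} bounds on $\|\Sigma_{\widehat{K}}-\Sigma_{\widehat{K}'}\|$ and $\|\mu_{\widehat{K}}-\mu_{\widehat{K}'}\|$ — after using the first branch of \eqref{3.23} to absorb the $|||\widehat{K}-\widehat{K}'|||^{2}$ terms into $|||\widehat{K}-\widehat{K}'|||$ and substituting $|||\widehat{K}-\widehat{K}'|||=\eta\sum_t\|\nabla_t C\|$ — collapses the linear residual to an $\eta^{2}\sum_t\operatorname{Tr}(\nabla_t C^{\top}\nabla_t C)$-type bound. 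Adding the pieces gives $|\mathcal{R}|\le\eta^{2}C_1\sum_t\operatorname{Tr}(\nabla_t C^{\top}\nabla_t C)$ with $C_1$ the polynomial of \eqref{3.24}, and the second branch $\eta\le1/(2C_1)$ gives $|\mathcal{R}|\le\tfrac{\eta}{2}\sum_t\operatorname{Tr}(\nabla_t C^{\top}\nabla_t C)$, hence $C(\widehat{K}')-C(\widehat{K})\le-\tfrac{\eta}{2}\sum_t\operatorname{Tr}(\nabla_t C^{\top}\nabla_t C)$.

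Finally I would invoke the upper bound of Lemma~\ref{le3.6}, $\sum_t\operatorname{Tr}(\nabla_t C^{\top}\nabla_t C)\ge\frac{4\underline{\sigma}_{X}^{2}\underline{\sigma}_{R}}{\|\Sigma_{\widehat{K}^{*}}\|+1}(C(\widehat{K})-C(\widehat{K}^{*}))$, to obtain $C(\widehat{K}')-C(\widehat{K})\le-\frac{2\eta\underline{\sigma}_{R}\underline{\sigma}_{X}^{2}}{\|\Sigma_{\widehat{K}^{*}}\|+1}(C(\widehat{K})-C(\widehat{K}^{*}))$, and add $C(\widehat{K})-C(\widehat{K}^{*})$ to both sides. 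The hard part will be the residual estimate of the second step: propagating the $\Sigma_{\widehat{K}},\mu_{\widehat{K}}$ perturbations through Lemma~\ref{le3.10}, controlling the $E_t,F_t$ blocks against $\nabla_t C$, bounding $\|\widehat{P}_t\|,\|\Sigma_t'\|$, and verifying that the first branch of \eqref{3.23} is exactly strong enough to linearize the $\Sigma_{\widehat{K}}$-perturbation bound and that the resulting residual coefficient is precisely the $C_1$ of \eqref{3.24}; the $\bar{w}_t,\Sigma_t^{w}$ terms contribute only the affine ($r_t$-type) and mean-dependent pieces already isolated in Lemmas~\ref{le3.7} and \ref{le3.10}.
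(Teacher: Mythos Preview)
Your proposal is correct and follows essentially the same route as the paper: substitute the gradient step into the almost-smoothness identity (Lemma~\ref{le3.7}), split $\Sigma_t',\mu_t'$ into main plus perturbation, use Lemma~\ref{le3.5} to collapse the main part to $-\eta\sum_t\operatorname{Tr}(\nabla_t C^{\top}\nabla_t C)$, bound the residual via Lemmas~\ref{le3.8} and~\ref{le3.10} so that the first branch of \eqref{3.23} linearizes the $\Sigma$-perturbation and the second branch $\eta\le 1/(2C_1)$ halves the descent, and finish with the gradient-domination inequality of Lemma~\ref{le3.6}. The only cosmetic difference is packaging: the paper stacks $\widetilde{\Sigma}_t=[\Sigma_t;\mu_t]$, $\widetilde{E}_t=[E_t\ -F_t]$ and manipulates traces via a $\widetilde{\Sigma}_t^{\dagger}$ insertion, whereas you exploit the equivalent observation that $\mu_t$ is the last row of $\Sigma_t$ to write $\nabla_t C=2\widehat{E}_t\Sigma_t$; both devices serve the same purpose of controlling the $E_t,F_t$ blocks against $\nabla_t C$ through $\underline{\sigma}_X$.
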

\begin{proof}
Given (\ref{3.22}) and condition (\ref{3.23}), we have $\|\widehat{K}_t^{\prime}-\widehat{K}_t\|=\eta\|\nabla_t C(\widehat{K})\| \leq$ $\frac{\underline{\sigma}_{{Q}} \underline{\sigma}_{{X}}}{2 C(\widehat{K})\|\widehat{B}\|}$. Therefore,
\begin{equation}
\|\widehat{B}_t\|\|\widehat{K}_t^{\prime}-\widehat{K}_t\| \leq \frac{\underline{\sigma}_{{Q}} \underline{\sigma}_{{X}}}{2 C(\widehat{K})} \leq \frac{1}{2}.
\label{leC.8}
\end{equation}
The last inequality holds since $\underline{\sigma}_{X} \leq \frac{C(\widehat{K})}{\underline{\sigma}_{{Q}}}$ given by Lemma \ref{le3.8}. Therefore, by Lemma \ref{le3.12},
\begin{equation}
\sum_{t=0}^{T-1}\|\mathcal{F}_{K_t}-\mathcal{F}_{K_t^{\prime}}\| \leq(2 \rho+1)\|\widehat{B}\|\left(\sum_{t=0}^{T-1}\|\widehat{K}_t-\widehat{K}_t^{\prime}\|\right).
\label{C.9}
\end{equation}
Define $\widetilde{\Sigma}_t = \begin{bmatrix}\Sigma_t \\ \mu_t \end{bmatrix}$, $\widetilde{\Sigma}'_t = \begin{bmatrix}\Sigma'_t \\ \mu'_t \end{bmatrix}$ and $\widetilde{E}_t = \begin{bmatrix}E_t & -F_t \end{bmatrix}$. By Lemmas \ref{le3.5} and \ref{le3.7}, we have
\begin{equation}
\begin{aligned}
&\quad \ C(\widehat{K}^{\prime})-C(\widehat{K})\\
&= \sum_{t=0}^{T-1}\left[2\operatorname{Tr}\left(\widetilde{\Sigma}_t^{\prime}(\widehat{K}_t^{\prime}-\widehat{K}_t)^{\top} \widetilde{E}_t\right) + \operatorname{Tr}\left(\Sigma_t^{\prime}(\widehat{K}_t^{\prime}-\widehat{K}_t)^{\top}(R_t+\widehat{B}_{t}^{\top}\widehat{P}_{t+1} \widehat{B}_t)(\widehat{K}_t^{\prime}-\widehat{K}_t)\right) \right] \\
&= \sum_{t=0}^{T-1}\left[-4 \eta \operatorname{Tr} \left(\widetilde{\Sigma}_t^{\prime} \widetilde{\Sigma}_t^{\top} \widetilde{E}_t^{\top} \widetilde{E}_t\right) +4 \eta^2 \operatorname{Tr} \left(\Sigma_t^{\prime} \widetilde{\Sigma}_t^{\top} \widetilde{E}_t^{\top}(R_t+\widehat{B}_{t}^{\top} \widehat{P}_{t+1} \widehat{B}_t) \widetilde{E}_t \widetilde{\Sigma}_t\right) \right] \\
&=  \sum_{t=0}^{T-1}\left[-4 \eta \operatorname{Tr}\left((\widetilde{\Sigma}_t^{\prime}-\widetilde{\Sigma}_t+\widetilde{\Sigma}_t) \widetilde{\Sigma}_t^{\top} \widetilde{E}_t^{\top} \widetilde{E}_t\right) +4 \eta^2 \operatorname{Tr} \left(\Sigma_t^{\prime} \widetilde{\Sigma}_t^{\top} \widetilde{E}_t^{\top}(R_t+\widehat{B}_{t}^{\top} \widehat{P}_{t+1} \widehat{B}_t) \widetilde{E}_t \widetilde{\Sigma}_t\right) \right] \\
&\leq  \sum_{t=0}^{T-1}\Bigl[-4 \eta \operatorname{Tr} \left(\widetilde{\Sigma}_t^{\top} \widetilde{E}_t^{\top} \widetilde{E}_t \widetilde{\Sigma}_t\right) +4 \eta \operatorname{Tr}\left((\widetilde{\Sigma}_t^{\prime}-\widetilde{\Sigma}_t) \widetilde{\Sigma}_t^{\top} \widetilde{E}_t^{\top} \widetilde{E}_t \widetilde{\Sigma}_t \widetilde{\Sigma}_t^{\dagger}\right) \\
&\quad +4 \eta^2 \operatorname{Tr} \left(\Sigma_t^{\prime} \widetilde{\Sigma}_t^{\top} \widetilde{E}_t^{\top}(R_t+\widehat{B}_{t}^{\top} \widehat{P}_{t+1} \widehat{B}_t) \widetilde{E}_t \widetilde{\Sigma}_t\right)\Bigr] \\
&\leq  \sum_{t=0}^{T-1}\Biggl[-4 \eta \operatorname{Tr} \left(\widetilde{\Sigma}_t^{\top} \widetilde{E}_t^{\top} \widetilde{E}_t \widetilde{\Sigma}_t\right)  +4 \eta \frac{\|\widetilde{\Sigma}'_t- \widetilde{\Sigma}_t\|}{\sigma_{\min}({\Sigma}_t)} \operatorname{Tr}\left(\widetilde{\Sigma}_t \widetilde{E}_t^{\top} \widetilde{E}_t \widetilde{\Sigma}_t\right)  \\
&\quad +4 \eta^2 \|\Sigma_t^{\prime}(R_t+\widehat{B}_{t}^{\top} \widehat{P}_{t+1} \widehat{B}_t)\| \operatorname{Tr} \left(\widetilde{\Sigma}_t \widetilde{E}_t^{\top} \widetilde{E}_t \widetilde{\Sigma}_t\right)\Biggr] \\
&\leq  -\eta\left[1-\frac{\sum_{t=0}^{T-1}\|\widetilde{\Sigma}_t^{\prime}-\widetilde{\Sigma}_t\|}{\underline{\sigma}_X}-\eta\|\Sigma_{\widehat{K}'}\| \sum_{t=0}^{T-1}\|R_t+\widehat{B}_{t}^{\top} \widehat{P}_{t+1} \widehat{B}_t \|\right] \sum_{t=0}^{T-1}\left[\nabla_t C(\widehat{K})^{\top} \nabla_t C(\widehat{K})\right].
\end{aligned}
\end{equation}
By Lemma \ref{le3.6}, we have
\begin{equation}
\begin{aligned}
C(\widehat{K}^{\prime})-C(\widehat{K}) &\leq  -\eta\left[1-\frac{\sum_{t=0}^{T-1}\|\widetilde{\Sigma}_t^{\prime}-\widetilde{\Sigma}_t\|}{\underline{\sigma}_{{X}}}-\eta\|\Sigma_{\widehat{K}'}\| \sum_{t=0}^{T-1}\|R_t+\widehat{B}_{t}^{\top} \widehat{P}_{t+1} \widehat{B}_t \|\right]  \\
&\quad \cdot \left(\frac{4 {\underline{\sigma}_{{X}}^2} \underline{\sigma}_{{R}}}{\|\Sigma_{\widehat{K}^*}\|+1}\right)\left(C(\widehat{K})-C(\widehat{K}^*)\right)
\end{aligned}
\end{equation}
provided that
\begin{equation}
1-\frac{\sum_{t=0}^{T-1}\|\widetilde{\Sigma}_t^{\prime}-\widetilde{\Sigma}_t\|}{\underline{\sigma}_{{X}}}-\eta\|\Sigma_{\widehat{K}^{\prime}}\| \sum_{t=0}^{T-1}\|R_t+\widehat{B}_{t}^{\top} \widehat{P}_{t+1} \widehat{B}_t\|>0.
\label{leC.12}
\end{equation}
By (\ref{3.21}), (\ref{3.22}), and (\ref{leC.8}),
$$
\begin{aligned}
\sum_{t=0}^{T-1}\|\widetilde{\Sigma}_t^{\prime}-\widetilde{\Sigma}_t\| &\leq \sum_{t=0}^{T-1}\|\Sigma_t^{\prime}-\Sigma_t\| +  \sum_{t=0}^{T-1}\|\mu_t^{\prime}-\mu_t\| \\
&\leq \frac{\rho^{2 T}-1}{\rho^2-1} \left(\frac{C(\widehat{K})}{\underline{\sigma}_{{Q}}}+  T(2\rho \|\widehat{D}\| \|\bar{w}\| \| \mu\|+ \|\widehat{D}\|^2 \|\Sigma^w\|)\right)\\
&\quad \cdot \left(\eta(2 \rho+1)\|\widehat{B}\| \sum_{t=0}^{T-1}\|\nabla_t C(\widehat{K})\|\right) + 2\eta\|\widehat{D}\| \|\bar{w}\| \|\mu\| \sum_{t=0}^{T-1}\|\nabla_t C(\widehat{K})\| \\
&\quad + \frac{\rho^T-1}{\rho-1}\left(\sqrt{\frac{TC(\widehat{K})}{\underline{\sigma}_Q}}+T \|\widehat{D}\|\|\bar{w}\| \right)\left(\eta \|\widehat{B}\|\sum_{t=0}^{T-1}\|\nabla_t C(\widehat{K})\|\right).
\end{aligned}
$$
Given the step size condition in (\ref{3.23}), we have
\begin{equation}
\begin{aligned}
\eta(2 \rho+1)\|\widehat{B}\| \sum_{t=0}^{T-1}\|\nabla_t C(\widehat{K})\| &\leq \eta(2 \rho+1)\|\widehat{B}\|\left(T \cdot \max _t\left\{\|\nabla_t C(\widehat{K})\|\right\}\right) \\
&\leq \frac{\left(\rho^2-1\right) \underline{\sigma}_{{Q}} \underline{\sigma}_{{X}}}{2\left(\rho^{2 T}-1\right)Dn_1}.
\label{leC.13}
\end{aligned}
\end{equation}
Then, by Corollary \ref{co3.14}, (\ref{3.21}), (\ref{C.9}) and (\ref{leC.13}),
$$
\begin{aligned}
\frac{\|\Sigma_{\widehat{K}^{\prime}}-\Sigma_{\widehat{K}}\|}{\underline{\sigma}_{{X}}} & \leq \frac{\rho^{2 T}-1}{\rho^2-1}\left(\sum_{t=0}^{T-1}\|\mathcal{F}_{\widehat{K}_t}-\mathcal{F}_{\widehat{K}_t^{\prime}}\| \right) \frac{\|\Sigma_0\| + T(2\rho \|\widehat{D}\| \|\bar{w}\| \| \mu\|+ \|\widehat{D}\|^2 \|\Sigma^w\|)}{\underline{\sigma}_{{X}}} \\
&\quad + \sum_{t=0}^{T-1}\frac{2\|\widehat{D}\| \|\bar{w}\| \|\mu\| \|\widehat{K}_t - \widehat{K}'_t\|}{\underline{\sigma}_{{X}}} \\
& \leq \frac{\rho^{2 T}-1}{\rho^2-1}(2 \rho+1)\|\widehat{B}\|\left(\sum_{t=0}^{T-1} \eta \| \nabla_t C(K)\|\right) \frac{C(K)+\underline{\sigma}_{{Q}} T (2\rho \|\widehat{D}\| \|\bar{w}\| \| \mu\|+ \|\widehat{D}\|^2 \|\Sigma^w\|)}{\underline{\sigma}_{{Q}} \underline{\sigma}_{{X}}} \\
&\quad + \frac{2\underline{\sigma}_{{Q}}\|\widehat{D}\| \|\bar{w}\| \| \mu \|}{\underline{\sigma}_{{Q}} \underline{\sigma}_{{X}}}\left(\sum_{t=0}^{T-1} \eta \| \nabla_t C(K)\|\right) \\
&\leq \frac{1}{2},
\end{aligned}
$$
Therefore, the bound of $\|\Sigma_{\widehat{K}^{\prime}}\|$ in (\ref{leC.12}) is given by
\begin{equation}
\|\Sigma_{\widehat{K}^{\prime}}\| \leq \|\Sigma_{\widehat{K}^{\prime}}-\Sigma_{\widehat{K}}\|+\|\Sigma_{\widehat{K}}\| \leq \frac{1}{2} \underline{\sigma}_{{X}}+\frac{C(\widehat{K} )}{\underline{\sigma}_{{Q}}} \leq \frac{1}{2}\|\Sigma_{\widehat{K}^{\prime}}\|+\frac{C(\widehat{K})}{\underline{\sigma}_{{Q}}},
\label{C.14}
\end{equation}
which indicates that $\|\Sigma_{\widehat{K}^{\prime}}\| \leq \frac{2 C(\widehat{K})}{{\underline{\sigma}}_{{Q}}}$. Therefore, (\ref{leC.12}) gives
$$
\begin{aligned}
&\quad \  1-\frac{\sum_{t=0}^{T-1}\|\widetilde{\Sigma}_t^{\prime}-\widetilde{\Sigma}_t\|}{\underline{\sigma}_{{X}}}-\eta\|\Sigma_{\widehat{K}^{\prime}}\| \sum_{t=0}^{T-1}\|R_t+\widehat{B}_{t}^{\top} \widehat{P}_{t+1} \widehat{B}_t\| \\
& \geq 1-\frac{\left(\rho^{2 T}-1\right)}{\left(\rho^2-1\right) \underline{\sigma}_{{X}}} \left(\frac{C(\widehat{K})}{\underline{\sigma}_{{Q}}}+  T(2\rho \|\widehat{D}\| \|\bar{w}\| \| \mu\|+ \|\widehat{D}\|^2 \|\Sigma^w\|)\right) \left(\eta(2 \rho+1)\|\widehat{B}\| \sum_{t=0}^{T-1}\|\nabla_t C(\widehat{K})\|\right)\\
&\quad - \frac{2\eta\|\widehat{D}\| \|\bar{w}\| \|\mu\|}{\underline{\sigma}_{{X}}} \sum_{t=0}^{T-1}\|\nabla_t C(\widehat{K})\| - \frac{\rho^T-1}{(\rho-1)\underline{\sigma}_{{X}}}\left(\sqrt{\frac{TC(\widehat{K})}{\underline{\sigma}_Q}}+T \|\widehat{D}\|\|\bar{w}\| \right)\left(\eta \|\widehat{B}\|\sum_{t=0}^{T-1}\|\nabla_t C(\widehat{K})\|  \right) \\
& \quad-\eta \frac{2 C(\widehat{K})}{\underline{\sigma}_{{Q}}} \sum_{t=0}^{T-1}\|R_t+\widehat{B}_{t}^{\top} \widehat{P}_{t+1} \widehat{B}_t \| \\
&=1-C_1 \eta,
\end{aligned}
$$
where $C_1$ is defined in (\ref{3.24}). So if $\eta \leq \frac{1}{2 C_1}$, then
$$
1-\frac{\sum_{t=0}^{T-1}\|\widetilde{\Sigma}_t^{\prime}-\widetilde{\Sigma}_t\|}{\underline{\sigma}_{{X}}}-\eta\|\Sigma_{\widehat{K}^{\prime}}\| \sum_{t=0}^{T-1}\|R_t+\widehat{B}_{t}^{\top} \widehat{P}_{t+1} \widehat{B}_t \| \geq 1-C_1 \eta \geq \frac{1}{2}>0 .
$$
Hence, 
$$C(\widehat{K}^{\prime})-C(\widehat{K}) \leq -\frac{\eta}{2}\left(\frac{4 {{\underline{\sigma}}_{{X}}^2} {\underline{\sigma}}_{{R}}}{\|\Sigma_{\widehat{K}^*}\|+1}\right)\left(C(\widehat{K})-C(\widehat{K}^*)\right),
$$
and 
$$
\begin{aligned}
C(\widehat{K}^{\prime})-C(\widehat{K}^*)&=\left(C(\widehat{K}^{\prime})-C(\widehat{K})\right)+\left(C(\widehat{K})-C(\widehat{K}^*)\right)\\
& \leq\left(1-2 \eta \frac{\underline{\sigma}_{{X}}^2 \underline{\sigma}_{{R}}}{\|\Sigma_{\widehat{K}^*}\|+1}\right)\left(C(\widehat{K})-C(\widehat{K}^*)\right).
\end{aligned}
$$
Thus, this lemma has been proven.
\end{proof}
\begin{lemma} 
We have that
$$
\sum_{t=0}^{T-1}\|\nabla_t C(\widehat{K})\|^2 \leq 4\left(\frac{TC(\widehat{K})}{\underline{\sigma}_{{Q}}}+\left(\frac{C(\widehat{K})}{\underline{\sigma}_{{Q}}}\right)^2\right) \frac{\max _t\|R_t+\widehat{B}_{t}^{\top} \widehat{P}_{t+1} \widehat{B}_t\|}{\underline{\sigma}_{{X}}+1}\left(C(\widehat{K})-C(\widehat{K}^*)\right),
$$
and that:
$$
\sum_{t=0}^{T-1}\|\widehat{K}_t\| \leq \frac{1}{\underline{\sigma}_{{R}}}\left(\sqrt{T \cdot \frac{\max _t\|R_t+\widehat{B}_{t}^{\top} \widehat{P}_{t+1} \widehat{B}_t \|}{\underline{\sigma}_{{X}}+1}\left(C(\widehat{K})-C(\widehat{K}^*)\right)}  +\sum_{t=0}^{T-1}\|{\widehat{B}_t}^{\top} \widehat{P}_{t+1} \widehat{A}_t \|\right).
$$
\label{le3.16}
\end{lemma}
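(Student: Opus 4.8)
The plan is to obtain both inequalities directly from the gradient representation in Lemma~\ref{le3.5}, the two-sided estimate in Lemma~\ref{le3.6}, and the norm bounds in Lemma~\ref{le3.8}. Throughout I would use the augmented shorthand $\widetilde{E}_t=\begin{bmatrix}E_t & -F_t\end{bmatrix}$ and $\widetilde{\Sigma}_t=\begin{bmatrix}\Sigma_t\\ \mu_t\end{bmatrix}$ introduced in the proof of Lemma~\ref{le3.6}, so that Lemma~\ref{le3.5} reads $\nabla_t C(\widehat{K})=2\widetilde{E}_t\widetilde{\Sigma}_t$ and hence $\|\nabla_t C(\widehat{K})\|^2=4\operatorname{Tr}\!\big(\widetilde{\Sigma}_t^{\top}\widetilde{E}_t^{\top}\widetilde{E}_t\widetilde{\Sigma}_t\big)$.

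For the gradient bound I would first apply $\operatorname{Tr}(M\widetilde{\Sigma}_t\widetilde{\Sigma}_t^{\top})\le\|\widetilde{\Sigma}_t\|^2\operatorname{Tr}(M)$ with the positive semidefinite matrix $M=\widetilde{E}_t^{\top}\widetilde{E}_t$ to get $\|\nabla_t C(\widehat{K})\|^2\le 4\|\widetilde{\Sigma}_t\|^2\big(\operatorname{Tr}(E_t^{\top}E_t)+F_t^{\top}F_t\big)$, using $\operatorname{Tr}(\widetilde{E}_t^{\top}\widetilde{E}_t)=\operatorname{Tr}(E_t^{\top}E_t)+F_t^{\top}F_t$. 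Summing over $t$ and pulling $\max_t\|R_t+\widehat{B}_t^{\top}\widehat{P}_{t+1}\widehat{B}_t\|$ out of the lower bound in Lemma~\ref{le3.6} yields $\sum_{t=0}^{T-1}\big(\operatorname{Tr}(E_t^{\top}E_t)+F_t^{\top}F_t\big)\le\frac{\max_t\|R_t+\widehat{B}_t^{\top}\widehat{P}_{t+1}\widehat{B}_t\|}{\underline{\sigma}_X+1}\big(C(\widehat{K})-C(\widehat{K}^*)\big)$. It then remains to bound $\|\widetilde{\Sigma}_t\|$ uniformly in $t$: from $\Sigma_s\succeq0$ we have $\|\Sigma_t\|\le\|\Sigma_{\widehat{K}}\|\le C(\widehat{K})/\underline{\sigma}_Q$, and the argument in part (iii) of the proof of Lemma~\ref{le3.8} in fact gives $\sum_{t}\|\mu_t\|^2\le C(\widehat{K})/\underline{\sigma}_Q$, so $\|\widetilde{\Sigma}_t\|^2\le\|\Sigma_t\|^2+\|\mu_t\|^2\le\big(C(\widehat{K})/\underline{\sigma}_Q\big)^2+TC(\widehat{K})/\underline{\sigma}_Q$; combining the three displays produces the stated estimate with constant $4$.

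For the second inequality I would invert the definition of $E_t$ in \eqref{Et}, writing $\widehat{K}_t=(R_t+\widehat{B}_t^{\top}\widehat{P}_{t+1}\widehat{B}_t)^{-1}\big(E_t+\widehat{B}_t^{\top}\widehat{P}_{t+1}\widehat{A}_t\big)$. Since $\widehat{B}_t^{\top}\widehat{P}_{t+1}\widehat{B}_t\succeq0$, we have $\sigma_{\min}(R_t+\widehat{B}_t^{\top}\widehat{P}_{t+1}\widehat{B}_t)\ge\sigma_{\min}(R_t)\ge\underline{\sigma}_R$, hence $\|\widehat{K}_t\|\le\underline{\sigma}_R^{-1}\big(\|E_t\|+\|\widehat{B}_t^{\top}\widehat{P}_{t+1}\widehat{A}_t\|\big)$. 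Summing over $t$, applying Cauchy--Schwarz to get $\sum_t\|E_t\|\le\sqrt{T}\big(\sum_t\|E_t\|^2\big)^{1/2}\le\sqrt{T}\big(\sum_t(\operatorname{Tr}(E_t^{\top}E_t)+F_t^{\top}F_t)\big)^{1/2}$, and bounding the last sum exactly as in the first part via Lemma~\ref{le3.6}, yields the claimed bound on $\sum_t\|\widehat{K}_t\|$.

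The whole argument is essentially mechanical; the only points requiring care are the Frobenius-versus-operator-norm bookkeeping in the trace inequalities and the uniform-in-$t$ control of $\|\widetilde{\Sigma}_t\|$, for which one must use the per-stage estimate implicit in the proof of Lemma~\ref{le3.8} rather than only its aggregate conclusions, so that the constants match the statement. No idea beyond Lemmas~\ref{le3.5}, \ref{le3.6} and \ref{le3.8} is needed.
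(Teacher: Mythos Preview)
Your proposal is correct and follows essentially the same route as the paper: rewrite $\nabla_t C(\widehat{K})=2\widetilde{E}_t\widetilde{\Sigma}_t$, peel off $\|\widetilde{\Sigma}_t\|^2$ with a trace inequality, bound $\sum_t\operatorname{Tr}(\widetilde{E}_t^{\top}\widetilde{E}_t)$ via the lower bound of Lemma~\ref{le3.6}, and control $\|\widetilde{\Sigma}_t\|$ using the per-stage estimates in the proof of Lemma~\ref{le3.8}; for the second claim, invert $E_t$ to isolate $\widehat{K}_t$, use $\sigma_{\min}(R_t+\widehat{B}_t^{\top}\widehat{P}_{t+1}\widehat{B}_t)\ge\underline{\sigma}_R$, apply Cauchy--Schwarz, and invoke Lemma~\ref{le3.6} once more. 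Your remark that one needs the intermediate bound $\sum_t\|\mu_t\|^2\le C(\widehat{K})/\underline{\sigma}_Q$ from inside the proof of Lemma~\ref{le3.8}, rather than only its stated aggregate conclusion, is exactly the point the paper relies on as well.
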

\begin{proof}
Using Lemma \ref{le3.8} we have 
$$
\begin{aligned}
\sum_{t=0}^{T-1}\|\nabla_t C(\widehat{K})\|^2 &\leq 4 \sum_{t=0}^{T-1} \operatorname{Tr}\left(\widetilde{\Sigma}_t^{\top} \widetilde{E}_t^{\top} \widetilde{E}_t \widetilde{\Sigma}_t\right) \\
&\leq 4 \sum_{t=0}^{T-1}\|\widetilde{\Sigma}_t\|^2 \operatorname{Tr}(\widetilde{E}_t^{\top} \widetilde{E}_t) \\
&\leq 4\left(\frac{TC(\widehat{K})}{\underline{\sigma}_{{Q}}}+\left(\frac{C(\widehat{K})}{\underline{\sigma}_{{Q}}}\right)^2\right) \sum_{t=0}^{T-1}\operatorname{Tr}(\widetilde{E}_t^{\top} \widetilde{E}_t).
\end{aligned}
$$
From Lemma \ref{le3.6} we have
\begin{equation}
\begin{aligned}
C(\widehat{K})-C(\widehat{K}^*) &\geq (\underline{\sigma}_{{X}}+1) \sum_{t=0}^{T-1} \frac{1}{\|R_t+\widehat{B}_{t}^{\top} \widehat{P}_{t+1} \widehat{B}_t\|} \operatorname{Tr}(\widetilde{E}_t^{\top} \widetilde{E}_t) \\
&\geq \frac{\underline{\sigma}_{{X}}+1}{\max _t\|R_t+\widehat{B}_{t}^{\top} \widehat{P}_{t+1} \widehat{B}_t \|} \sum_{t=0}^{T-1} \operatorname{Tr}(\widetilde{E}_t^{\top} \widetilde{E}_t),      
\end{aligned}
\label{3.25}
\end{equation}
and hence
$$
\sum_{t=0}^{T-1}\|\nabla_t C(\widehat{K})\|^2 \leq 4\left(\frac{TC(\widehat{K})}{\underline{\sigma}_{{Q}}}+\left(\frac{C(\widehat{K})}{\underline{\sigma}_{{Q}}}\right)^2\right) \frac{\max _t\|R_t+\widehat{B}_{t}^{\top} \widehat{P}_{t+1} \widehat{B}_t\|}{\underline{\sigma}_{{X}}+1}\left(C(\widehat{K})-C(\widehat{K}^*)\right).
$$
For the second claim, using Lemma \ref{le3.6} again,
$$
\begin{aligned}
\sum_{t=0}^{T-1}\|\widehat{K}_t\| &=\sum_{t=0}^{T-1}\|(R_t+\widehat{B}_{t}^{\top} \widehat{P}_{t+1} \widehat{B}_t)^{-1} \widehat{K}_t(R_t+\widehat{B}_{t}^{\top} \widehat{P}_{t+1} \widehat{B}_t)\| \\
& \leq \sum_{t=0}^{T-1} \frac{1}{\sigma_{\min }(R_t)}\|\widehat{K}_t(R_t+\widehat{B}_{t}^{\top} \widehat{P}_{t+1} \widehat{B}_t)\|\\
& \leq \sum_{t=0}^{T-1} \frac{1}{\sigma_{\min }(R_t)}\left(\|E_t\|+\|\widehat{B}_t^{\top} \widehat{P}_{t+1} \widehat{A}_t \|\right) \\
& \leq \sum_{t=0}^{T-1}\left(\frac{\sqrt{\operatorname{Tr}(E_t^{\top} E_t)+F_t^{\top}F_t}}{\sigma_{\min }\left(R_t\right)}+\frac{\|{\widehat{B}_t}^{\top} \widehat{P}_{t+1} \widehat{A}_t\|}{\sigma_{\min }\left(R_t\right)}\right) \\
& \leq \frac{1}{\underline{\sigma}_{{R}}}\left(\sqrt{T \cdot \sum_{t=0}^{T-1} \operatorname{Tr}\left(E_t^{\top} E_t\right)+F_t^{\top}F_t}+\sum_{t=0}^{T-1}\|{\widehat{B}_t}^{\top} \widehat{P}_{t+1} \widehat{A}_t\|\right) \\
& \leq \frac{1}{\underline{\sigma}_{{R}}}\left(\sqrt{T \cdot \frac{\max _t\|R_t+\widehat{B}_{t}^{\top} \widehat{P}_{t+1} \widehat{B}_t\|}{\underline{\sigma}_{{X}}+1}\left(C(\widehat{K)}-C(\widehat{K}^*)\right)}+\sum_{t=0}^{T-1}\|{\widehat{B}_t}^{\top} \widehat{P}_{t+1} \widehat{A}_t\| \right).
\end{aligned}
$$
The second inequality holds by the definition of $E_t$ in (\ref{Et}), the second last step uses the Cauchy-Schwarz inequality, and the last inequality holds by (\ref{3.25}).
\end{proof}

\begin{theorem}[Global Convergence of the Gradient-Based Policy Update]
Assume that $\underline{\sigma}_{{X}}>0$ and the initial cost $C(\widehat{K}^{0})$ is finite. Let $\beta \in (0,1)$ denote a prescribed confidence level, and let $\varepsilon_M(\beta)$ be the radius of the Wasserstein ball $\mathbb{B}_{\varepsilon_M(\beta)}(\widehat{\mathbb{P}}_M)$ determined by Theorem \ref{m-con}.
Then, there exists a constant stepsize $\eta \in \mathcal{H}\left(\frac{1}{C(\widehat{K}^0)+1}\right)$ such that, with probability at least $1-\beta$ over the sampling of $\widehat{\mathbb{P}}_M$, the gradient-based policy update rule (\ref{EGDM}) satisfies
$$
\mathbb{P}^M \left\{C(\widehat{K}^N)-C(\widehat{K}^*) \leq \varepsilon \right\} > 1-\beta
$$
whenever
$$
N\geq \frac{\|\Sigma_{\widehat{K}^{*}}\|+1}{2 \eta \underline{\sigma}_{{X}}^2 \underline{\sigma}_{{R}}}\log \frac{C(\widehat{K}^0)-C(\widehat{K}^{*})}{\varepsilon}.
$$
\label{th:3.3}
\end{theorem}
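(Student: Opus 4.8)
The plan is to combine the one-step contraction of Lemma~\ref{le3.15} with an induction that confines every iterate to the sublevel set $\{\widehat K : C(\widehat K)\le C(\widehat K^0)\}$, then unroll the resulting geometric recursion, and finally attach the confidence statement via Theorem~\ref{m-con} and Theorem~\ref{finite-sample}.

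\textbf{Fixing a uniform stepsize.} Lemma~\ref{le3.15} only yields the geometric decrease when $\eta$ satisfies~(\ref{3.23}), whose right-hand side depends on the \emph{current} iterate through $C(\widehat K)$, through $\rho$ in~(\ref{3.16}) (hence through $\|\widehat K_t\|$ and $\|\widehat K_t'\|$), and through $\max_t\|\nabla_t C(\widehat K)\|$, $Dn_1$, $Dn_2$, $C_1$. First I would replace each of these data-dependent quantities by a bound valid on the entire sublevel set: Lemma~\ref{le3.8} controls $\|\widehat P_t\|$, $\|\Sigma_{\widehat K}\|$, $\|\mu_{\widehat K}\|$ by $\mathcal H\cdot C(\widehat K)$; the definition~(\ref{Et}) then gives $\|E_t\|,\|F_t\|\le\mathcal H\cdot C(\widehat K)$; Lemma~\ref{le3.16} gives $\sum_t\|\nabla_t C(\widehat K)\|^2\le\mathcal H\cdot\mathrm{poly}(C(\widehat K))$ and $\sum_t\|\widehat K_t\|\le\mathcal H\cdot\mathrm{poly}(C(\widehat K))$, whence $\rho\le 1+\xi+\max_t\big(\|\widehat A_t\|+\|\widehat B_t\|\|\widehat K_t\|\big)\le\mathcal H\cdot\mathrm{poly}(C(\widehat K))$ (and the $\widehat K'$-contribution to~(\ref{3.16}) is controlled the same way once we also use that $\eta\|\nabla_t C(\widehat K)\|$ is bounded by~(\ref{leC.8})). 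Substituting $C(\widehat K)\le C(\widehat K^0)$ throughout turns the two expressions in~(\ref{3.23}) into an $\eta$-bound depending only on the model parameters and $C(\widehat K^0)$; taking $\eta$ equal to that bound and shrinking it, if necessary, so that $2\eta\,\underline\sigma_R\,\underline\sigma_X^2/(\|\Sigma_{\widehat K^*}\|+1)<1$, produces a constant stepsize $\eta\in\mathcal H\!\big(\tfrac1{C(\widehat K^0)+1}\big)$.

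\textbf{Induction and unrolling.} The claim is $C(\widehat K^n)\le C(\widehat K^0)$ for all $n\ge 0$, with base case the hypothesis $C(\widehat K^0)<\infty$. If $C(\widehat K^n)\le C(\widehat K^0)$, the previous paragraph shows $\eta$ obeys~(\ref{3.23}) at $\widehat K^n$, so Lemma~\ref{le3.15} with $\widehat K=\widehat K^n$, $\widehat K'=\widehat K^{n+1}$ yields
\[
C(\widehat K^{n+1})-C(\widehat K^*)\le\Big(1-\tfrac{2\eta\,\underline\sigma_R\,\underline\sigma_X^2}{\|\Sigma_{\widehat K^*}\|+1}\Big)\big(C(\widehat K^{n})-C(\widehat K^*)\big);
\]
since the factor lies in $(0,1)$, in particular $C(\widehat K^{n+1})\le C(\widehat K^n)\le C(\widehat K^0)$, closing the induction (note $C(\widehat K^*)\le C(\widehat K^0)<\infty$). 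Chaining the displayed bound from $0$ to $N-1$ and using $1-x\le e^{-x}$ gives
\[
C(\widehat K^{N})-C(\widehat K^*)\le\exp\!\Big(-\tfrac{2\eta\,\underline\sigma_R\,\underline\sigma_X^2}{\|\Sigma_{\widehat K^*}\|+1}\,N\Big)\big(C(\widehat K^{0})-C(\widehat K^*)\big),
\]
which is $\le\varepsilon$ exactly when $N\ge\frac{\|\Sigma_{\widehat K^*}\|+1}{2\eta\,\underline\sigma_X^2\,\underline\sigma_R}\log\frac{C(\widehat K^0)-C(\widehat K^*)}{\varepsilon}$, the case $C(\widehat K^0)=C(\widehat K^*)$ being trivial.

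\textbf{Confidence wrapper and main obstacle.} Everything above is deterministic once $\widehat{\mathbb P}_M$, hence the empirical moments $\bar w_t,\Sigma_t^w$ entering $C$, $\widehat K^*$ and $\Sigma_{\widehat K^*}$, is fixed. By Theorem~\ref{m-con} the radius $\varepsilon_M(\beta)$ is calibrated so that $\mathbb P^M\{\mathbb P\in\mathbb B_{\varepsilon_M(\beta)}(\widehat{\mathbb P}_M)\}\ge 1-\beta$, and on that event Theorem~\ref{finite-sample} supplies the finite-sample performance guarantee that legitimises optimising the empirical (penalised) objective; intersecting this event with the sure convergence event above yields $\mathbb P^M\{C(\widehat K^N)-C(\widehat K^*)\le\varepsilon\}>1-\beta$ for $N$ as in the statement. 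The delicate part is the first step: verifying that, after substituting the Lemma~\ref{le3.8}/Lemma~\ref{le3.16} bounds and the induced bound on $\rho$, every term in $Dn_1$, $Dn_2$ and $C_1$ in~(\ref{3.24}) is dominated by a fixed element of $\mathcal H$ evaluated at $C(\widehat K^0)$, so that a \emph{single} $\eta\in\mathcal H\!\big(\tfrac1{C(\widehat K^0)+1}\big)$ is admissible at every iteration simultaneously. Once that uniformity is secured, the rest is the standard gradient-domination-plus-contraction telescoping.
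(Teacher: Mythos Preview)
Your proposal is correct and follows essentially the same route as the paper: bound every iterate-dependent quantity in~(\ref{3.23})--(\ref{3.24}) by polynomials in $C(\widehat K^0)$ via Lemmas~\ref{le3.8} and~\ref{le3.16}, fix a uniform $\eta\in\mathcal H(\tfrac1{C(\widehat K^0)+1})$, run the induction that keeps $C(\widehat K^n)\le C(\widehat K^0)$ so Lemma~\ref{le3.15} applies at every step, unroll the geometric contraction, and finally invoke the finite-sample guarantee for the $1-\beta$ wrapper. The paper additionally spells out the elementary inequalities $\tfrac1{ab+c}>\tfrac1{(a+1)(b+1)(c+1)}$ and $\tfrac1{a^n+1}>\tfrac1{(a+1)^n}$ to justify that $1/C_1$ and $1/\rho$ admit polynomial lower bounds, which is exactly the ``delicate part'' you flagged as the main obstacle.
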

\begin{proof}
To establish the existence of a positive stepsize $\eta$ ensuring condition (\ref{3.23}), it suffices to demonstrate that the right-hand side of (\ref{3.23}) admits a strictly positive lower bound. From Lemma \ref{le3.16} and by applying the Cauchy–Schwarz inequality, we have
\begin{equation}
\begin{aligned}
&\quad \ \sum_{t=0}^{T-1}\|\nabla_t C(\widehat{K})\| \\
&\leq \sqrt{T \cdot \sum_{t=0}^{T-1}\|\nabla_t C(\widehat{K})\|^2} \\
&\leq \sqrt{ 4T\left(\frac{TC(\widehat{K})}{\underline{\sigma}_{{Q}}}+\left(\frac{C(\widehat{K})}{\underline{\sigma}_{{Q}}}\right)^2\right) \frac{\max _t\|R_t+\widehat{B}_{t}^{\top} \widehat{P}_{t+1} \widehat{B}_t\|}{\underline{\sigma}_{{X}}+1}\left(C(\widehat{K})-C(\widehat{K}^*)\right)}.     
\end{aligned}
\label{3.26}
\end{equation}
Observe that for positive scalars $a, b, c, d>0$, the inequality $d<a b+c$ implies $\frac{1}{d}>\frac{1}{(a+1)(b+1)(c+1)}$ and for $a>0$ and $n \in \mathbb{N}^{+}$, we have $\frac{1}{a^n+1}>\frac{1}{(a+1)^n}$. Consequently, by busing inequalities (\ref{3.24}), we find that $\frac{1}{C_1}$ admits a polynomial lower bound. Next, we establish that $\frac{1}{\rho}$ is also bounded below by a polynomial in the same system parameters. From Lemma \ref{le3.15}, under the condition
$\|\widehat{B}_t\|\|\widehat{K}_t^{\prime}-\widehat{K}_t\| \leq \frac{\underline{\sigma}_Q \underline{\sigma}_X}{4 C(\widehat{K})} \leq \frac{1}{2}$, we have
$$
\max _{0 \leq t \leq T-1}\|\widehat{A}_t-\widehat{B}_t \widehat{K}_t^{\prime}\| \leq \max _t\|\widehat{A}_t-\widehat{B}_t \widehat{K}_t\|+\frac{1}{2}
$$
Hence,
$$
\begin{aligned}
\rho & =\max \left\{\max _{0 \leq t \leq T-1}\|\widehat{A}_t - \widehat{B}_t \widehat{K}_t\|, \max _{0 \leq t \leq T-1}\|\widehat{A}_t - \widehat{B}_t \widehat{K}_t^{\prime}\|, 1+\xi\right\} \\
& \leq \max \left\{\max _{0 \leq t \leq T-1}\|\widehat{A}_t - \widehat{B}_t \widehat{K}_t\|+\frac{1}{2}, 1+\xi\right\}\\
& \leq \max \left\{\|{\widehat{A}_t}\|+\|\widehat{B}_t\| \sum_{t=0}^{T-1}\|\widehat{K}_t\|+\frac{1}{2}, 1+\xi\right\} .
\label{3.27}
\end{aligned}
$$
By Lemma \ref{le3.16}, $\sum_{t=0}^{T-1}\|\widehat{K}_t\|$ is bounded, and Lemma \ref{le3.8} provides $\|\widehat{P}_t\|\leq \frac{C(\widehat{K})}{\underline{\sigma}_X}$.
Therefore, $\rho$ is bounded above by a polynomial in $\|\widehat{A}_t\|$, $\|\widehat{B}_t\|$, $|||{R}|||$, $\frac{1}{\underline{\sigma}_{{X}}}$, $\frac{1}{\underline{\sigma}_{{R}}}$ and $C(\widehat{K})$, or a constant $1+\xi$. Therefore $\frac{1}{\rho}$ can also be bounded below by the corresponding polynomials. Hence, by selecting $\eta \in \mathcal{H}\left(\frac{1}{C(\widehat{K}^0)+1}\right)$ as an appropriate polynomial function, the condition (\ref{3.23}) is satisfied since each gradient step guarantees that $C(\widehat{K}^1)<C(\widehat{K}^0)$. Applying Lemma \ref{le3.15}, we obtain the descent inequality:
$$
C(\widehat{K}^1)-C(\widehat{K}^*) \leq\left(1-2 \eta  \frac{\underline{\sigma}_{{X}}^2\underline{\sigma}_{{R}}}{\|\Sigma_{\widehat{K}^*}\|+1}\right)\left(C(\widehat{K}^0)-C(\widehat{K}^*)\right),
$$
which ensures a strict reduction in cost after the first iteration. By induction, suppose $C(\widehat{K}^n) \leq C(\widehat{K}^0)$. The stepsize condition in (\ref{3.23}) continues to hold, and applying Lemma \ref{le3.15} again yields
$$
C(\widehat{K}^{n+1})-C(\widehat{K}^*) \leq\left(1-2 \eta \frac{\underline{\sigma}_{{X}}^2 \underline{\sigma}_{{R}}}{\|\Sigma_{\widehat{K}^*}\|+1}\right)\left(C(\widehat{K}^n)-C(\widehat{K}^*)\right) .
$$
Thus, the cost sequence ${C(\widehat{K}^{n})}$ decreases geometrically, and for any $\varepsilon>0$, it follows that
$$
C(\widehat{K}^{N})-C(\widehat{K}^*) \leq \varepsilon \mbox{ whenever } N \geq \frac{\|\Sigma_{\widehat{K}^*}\|+1}{2 \eta \underline{\sigma}_X^2 \underline{\sigma}_R} \log \frac{C(\widehat{K}^0)-C(\widehat{K}^*)}{\varepsilon} .
$$
By the uniform performance guarantee (\ref{perf-gua}), with probability at least $1-\beta$ the empirical/DRO objective dominates the true objective simultaneously for all policies, hence for every iterate $\widehat{K}^i$ and for $\widehat{K}^*$. On this high-probability event, the true suboptimality gap at any iterate is bounded by the corresponding empirical gap. Since the first part of the proof already establishes geometric decay of the empirical gap and yields the iteration bound $N$ that makes the empirical gap $\leq \varepsilon$, the same bound applies to the true gap on the same event. Therefore,
$$
\mathbb{P}^M\left\{C(\widehat{K}^N)-C(\widehat{K}^*) \leq \varepsilon\right\} \geq 1-\beta, 
$$
which completes the proof.
\end{proof}

\section{Numerical Examples}
We present two numerical studies to illustrate the performance and properties of the proposed algorithm. The first examines a mean–variance (MV) portfolio optimization problem, highlighting how control constraints influence the optimal policy and cost. The second studies a dynamic benchmark tracking problem with additive uncertainty under a distributionally robust optimization (DRO) framework, focusing exclusively on the unknown noise setting and providing a sensitivity analysis with respect to the parameters.

\subsection{Mean-Variance Problem} \label{MV-intro}
We consider a standard multi-period mean-variance portfolio problem over a finite horizon. An investor starts with initial wealth $x_0$ and at each period $t=0, \ldots, T-1$ allocates wealth between one risk-free asset and $n$ risky assets. The risk-free asset yields a deterministic gross return $r_t$, while the risky assets have random return vector $e_t=\left(e_t^1, \ldots, e_t^n\right)^{\top}$. We assume that $e_0, \ldots, e_{T-1}$ are independent and are characterized by their first two moments: mean $\mathbb{E}\left(e_t\right)$ and covariance matrix $\operatorname{Cov}\left(e_t\right)$, which is positive semidefinite.

Let $x_t$ denote the investor's wealth at the beginning of period $t$, and let $u_t=\left(u_t^1, \ldots, u_t^n\right)^{\top}$ be the amount invested in the $n$ risky assets. Writing $ P_t=e_t-r_t \mathbf{1}$ for the excess return vector, the wealth processes as
$$
x_{t+1}=r_t x_t+P_t u_t, \quad t=0, \ldots, T-1.
$$
Following the classical mean–variance framework of Markowitz and its multi-period extensions (see, e.g., \cite{markowitz1952portfolio}, \cite{cui2014optimal} and \cite{wu2018explicit}), we fix a target level $d$ for the expected terminal wealth and consider
\begin{subequations}    
\begin{align}
\mbox { (MV) :}\quad &\min_{u_t} 
\operatorname{Var}\left[x_T\right]+\mathbb{E}  \left[ \sum_{t=0}^{T-1} u_t^{\top} R u_t\right] \label{main_func} \\
&\mbox { s.t. }  \mathbb{E}\left[x_T\right]=d, \label{target_d}\\
& \quad \quad x_{t+1}=r_t x_t+{P}_t {u}_t, \quad t=0, \cdots, T-1, \label{eqeg.1b}
\end{align}
\label{MV_finite}
\end{subequations}
Here $R \succeq 0$ penalizes large risky positions, and the constraint $d \geq x_0 \prod_{t=0}^{T-1} r_t$ ensures that the target exceeds the terminal wealth from investing only in the risk-free asset.

The Lagrangian treatment of the expectation constraint and the subsequent change of variables that shifts the state around the target are standard and lead to an equivalent LQ control formulation. Specifically, by introducing a scalar Lagrange multiplier and redefining the state, problem (MV) can be rewritten as a finite horizon stochastic LQ problem of the form
\begin{equation}
\begin{aligned}
\mbox { (MV-$\lambda_2$) :}\quad &\min_{u_t} 
\mathbb{E}\left[y_T^2\right]+ \mathbb{E}\left[\sum_{t=0}^{T-1}  u_t^{\top} R u_t\right]\\
&\mbox { s.t. } y_{t+1}=r_t y_t+{P}_t {u}_t, \quad t=0, \cdots, T-1.
\end{aligned}
\label{MV-L2}
\end{equation}
parametrized by the Lagrange multiplier. We follow exactly the derivation developed in our previous work \cite{zhang2025data} and refer to that paper for the detailed algebra and proofs. 

\subsubsection{Parameter Settings}
This section applies the MV problem to illustrate its formulation as a LQ problem with a scalar state. This example pertains to the finite-time horizon scenario, wherein the investor aims to minimize their risk over a span of $T$ periods. As mentioned in the preliminary section \ref{MV-intro}, the MV problem can be stated according to the model (\ref{MV-L2}) as follows: 
\begin{equation}
\min_{u_t} \mathbb{E}_\pi\left[\sum_{t=0}^{T-1} u_t^{\top} R_t u_t  + x_T^2\Bigg| x_0=x\right],
\label{ch3-example-1}
\end{equation}
which is subject to
\begin{equation}
x_{t+1}= r_t x_t + P_t u_t. 
\label{eq4-eg.2}
\end{equation}
Suppose the investor requires the proportion of each risky asset to be between 10\% and 20\% of the total wealth throughout all periods. This constraint can be expressed as:
$$
0.1 |x_t| \leq u_t^i \leq 0.2 |x_t|, \quad \forall i = 1, \cdots, n,
$$
where $x_t$ is the total wealth at time $t$, and $u_i$ represents the amount invested in risky asset $i$ at time $t$. The problem (MV) is evidently a specific instance of the problem presented in section \ref{ch3-setup} by setting $x_t \in \mathbb{R}$, $u_t \in \mathbb{R}^n$, $Q_t \in \mathbb{R}$, $R_t \in \mathbb{R}^{n \times n}$, $A_t=r_t$, $B_t= P_t$, $Q_t=0$ and $D_t = 0$ for $t=0, \cdots, T-1$ and  $Q_T=1$. For this state-dependent upper and lower bounds constraint case, we have $H_t=\begin{bmatrix}\mathbf{I}_n \\ -\mathbf{I}_n\end{bmatrix}$ and $d_t=\begin{bmatrix}\overline{d}_t\\ -\underline{d}_t\end{bmatrix}$ with $\overline{d}_t = \begin{bmatrix} 0.2 \\ 0.2 \end{bmatrix}$ and $\underline{d}_t = \begin{bmatrix} -0.1 \\ -0.1 \end{bmatrix}$. To ensure that our selected parameters meet the criteria for our LQ framework, we randomly choose the following parameters: $T = 5$ and $n = 3$. For the state-transition matrices $A_t$, we establish the risk-free rate $6\%$, resulting in $A_t = 1.06$. The expcted excess returns vector $B_t$ for the risky assets is expressed as $B_t = \begin{bmatrix} 14.8\% & 16\% & 9.8\% \end{bmatrix}$. The matrix $R_t$ is chosen randomly at each time interval. The initial wealth $x_0$ follows normal distribution, $x_0 \sim \mathcal{N}(10, 5 \times 10^{-2})$, with the currency expressed in thousand dollars. All entries in the initial policy matrix $K^0 \in \mathbb{R}^{n}$ are $0.08$.

\subsubsection{Results and Analysis}
We evaluate the performance of the proposed policy gradient algorithm under both known and unknown parameter settings. Performance is reported using the normalized error metric $\frac{C-C^*}{C}$, where $C$ denotes the cost associated with a given policy $K$, and $C^*$ represents the optimal cost corresponding to the benchmark policy $K^*$ defined in (\ref{real-k-star}).

\begin{figure}[h]
\centering
\subfigure{\includegraphics[width=0.48\textwidth]{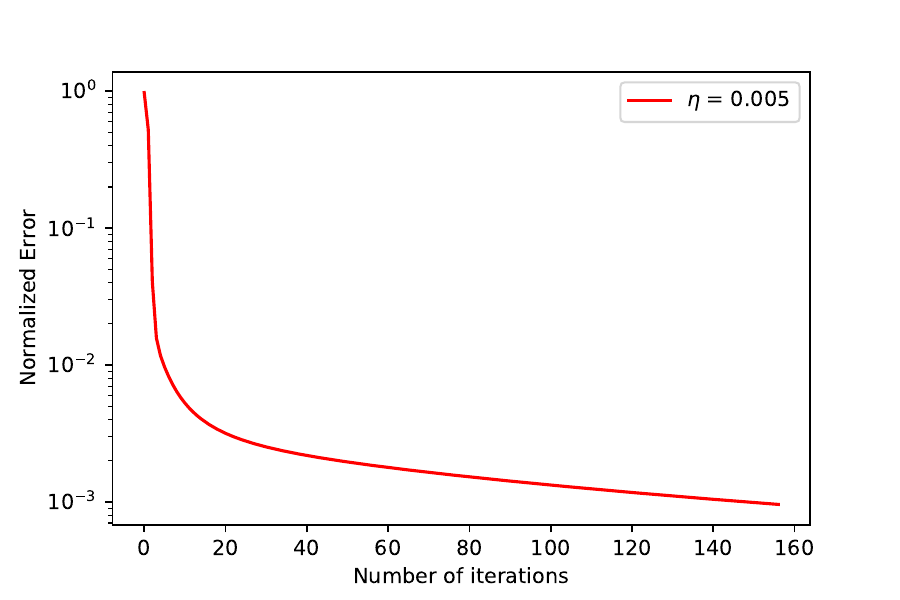}
\vspace{0.2mm}
\includegraphics[width=0.48\textwidth]{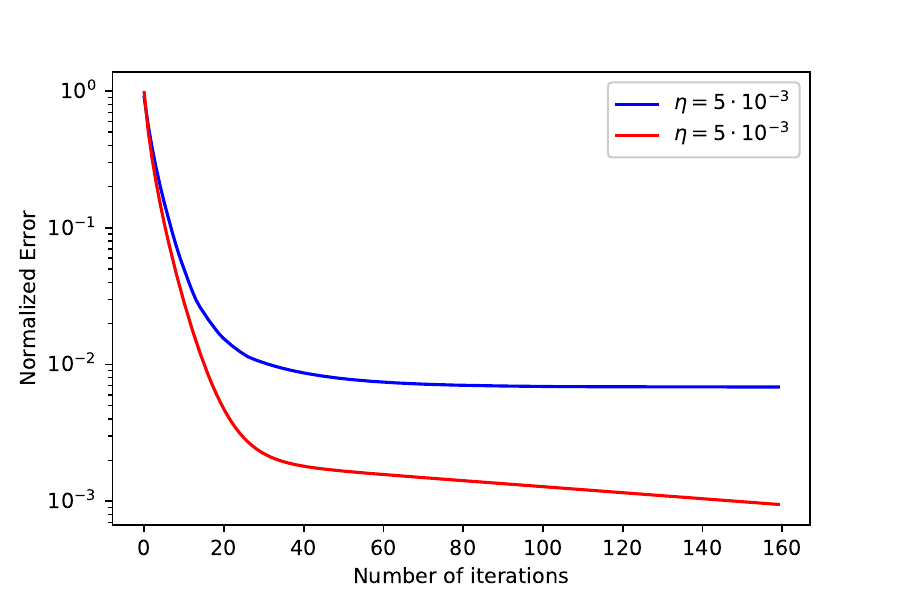}}
\caption{Performance of the policy gradient algorithm}
\label{figure1}
\end{figure}
Figure \ref{figure1} summarizes two aspects: convergence and the impact of the control constraint. The left shows fast convergence—the normalized error drops below $10^{-3}$ in roughly 160 iterations. The right compares constrained (blue) and unconstrained (red) updates under the same backtest: the constrained variant converges more slowly and settles at a slightly higher error level, consistent with a practical risk limit that tempers leverage and tail risk at a modest performance cost.
\begin{figure}[h]
\subfigure{\includegraphics[width=0.33\textwidth]{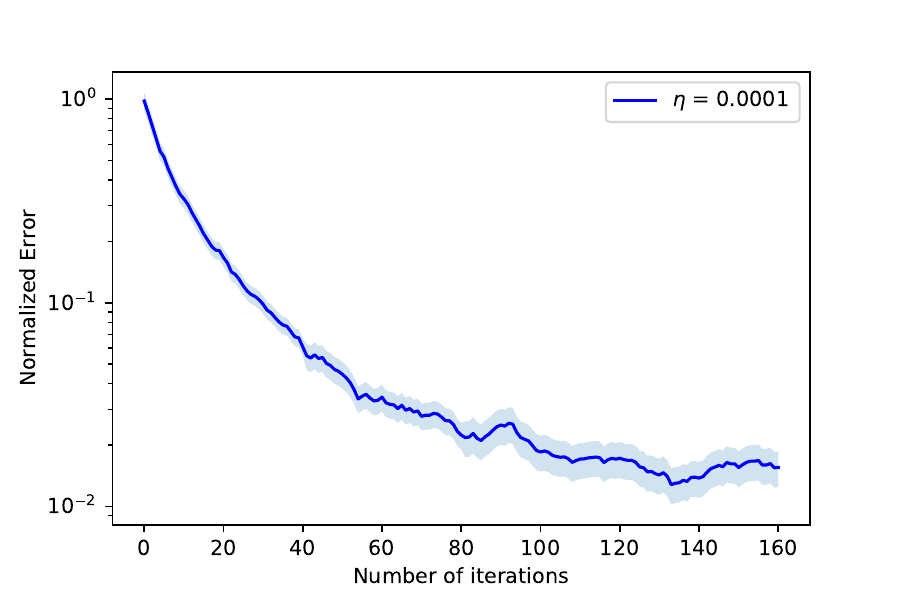}
\hspace{0.1mm}
\includegraphics[width=0.33\textwidth]{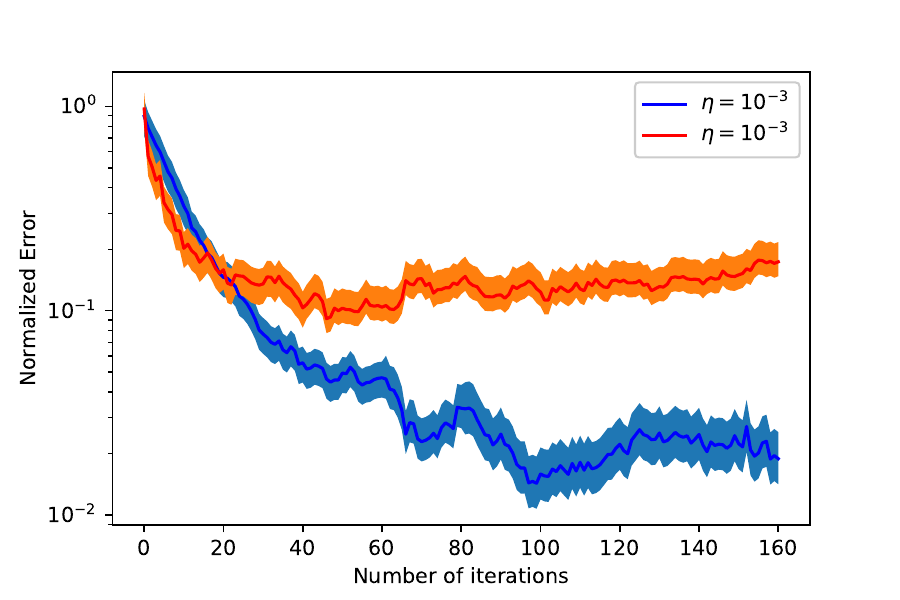}
\hspace{0.1mm}
\includegraphics[width=0.33\textwidth]{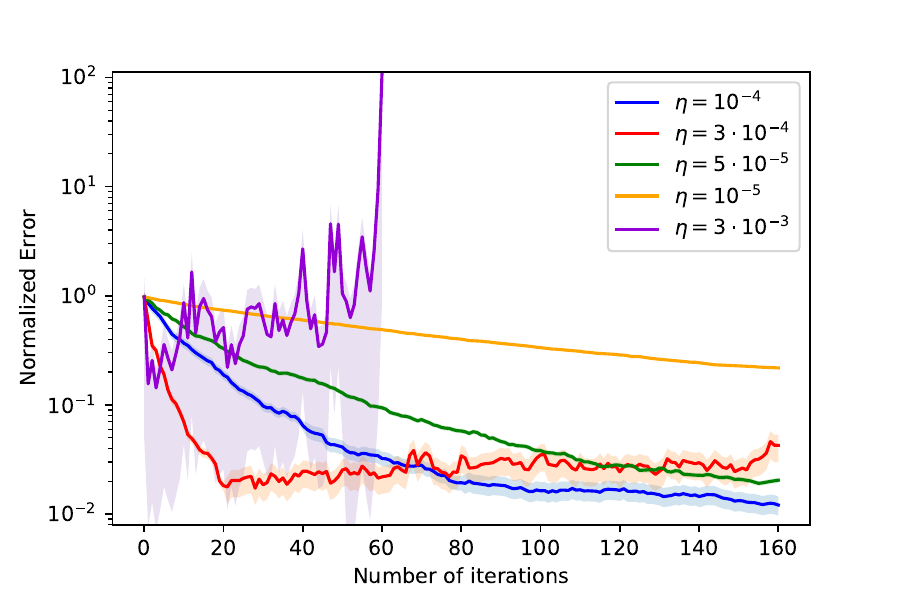}}
\caption{Comparison for constrained and non-constrained}
\label{figure2}
\end{figure}

When system parameters are unknown, Figure \ref{figure2} collects the corresponding results: convergence, the constrained (orange) vs. unconstrained (blue) comparison, and the step-size study. Convergence is noticeably slower because the algorithm must jointly estimate dynamics and optimize control; after a similar number of iterations, the error is around $10^{-2}$ (left). The constrained–unconstrained gap widens relative to the known-parameter case due to added estimation uncertainty (middle). The step-size analysis (right) shows that $\eta$ in the range $[10^{-5}, 3\times10^{-3}]$ materially affects learning: very small $\eta$ yields steady but slow progress with low fluctuation, while overly large $\eta$ can cause divergence.

\subsection{Dynamic Benchmark Tracking}
You manage a small basket of sector ETFs and want to track a benchmark (e.g., SPY) over a short horizon while minimizing (i) deviation from the benchmark and (ii) trading effort/cost. In practice, the “ideal” sector exposures that best replicate the benchmark drift over time due to composition/sector-mix changes. You trade to keep up with those drifting targets. This  benchmark tracking problem is well known to admit stochastic LQ ormulations; see, e.g., Yao \& Zhang \cite{yao2006tracking} about stochastic LQ tracking and later dynamic trading papers that cast portfolio rebalancing with quadratic costs in LQ form \cite{moallemi2017dynamic, garleanu2013dynamic}. Define the exposure deviation (state), trades (control) and noise term as follow:
\begin{itemize}
\item State (dimension = number of sector ETFs): $x_t \in \mathbb{R}^m$ is the exposure deviation at time $t$, which is actual sector holdings (in shares or exposure units) minus the target exposures that best track the benchmark at $t$.
\item Control: $u_t \in \mathbb{R}$ is the trades volumn changes in sector holdings. Positive means buy. 
\item The target exposure drift as exogenous noise: let $a_t^* \in \mathbb{R}^m$ be time-varying target exposures that best explain benchmark returns at time $t$, estimated from data. Its one-step change $w_t = a_{t+1}^*-a_t^*$ is an unpredictable, zero-mean additive noise to the exposure deviation dynamics. 
\end{itemize}
The dynamics are
$$
x_{t+1}=x_t-u_t+w_t,
$$
where $A_t=\mathbf{I}_m$, $B_t=-\mathbf{I}_m$, $D_t=\mathbf{I}_m$. We use a quadratic tracking objective (\ref{w-penalty}) with
$$
Q_t=\lambda_{\text{step }} \sigma_{\text{step }}^2 \mathbf{I}_m, \quad R_t=\operatorname{diag}\left(c_1, \ldots, c_m\right), \quad Q_T=\bar{q} \mathbf{I}_m(\bar{q} > 0) .
$$
Here $\lambda_{\text{step }} >0$ scales tracking risk via per-step variance $\sigma_{\text {step }}^2 ; R_t$ collects per ETF trading cost weights $c_j>0$; a large $\bar{q}$ enforces terminal closeness to the benchmark. 

With full state and no hard boxes, the classical LQ feedback is from the Riccati recursion. To hedge misspecified unknown or light-tailed $w_t$, we also use a   Wasserstein penalized DRO-LQ (\ref{w-penalty}), for which linear policies remain optimal and the backward recursion admits a simple modification via a positive-definite factor $\Delta$.

\subsubsection{Data Processing}
We use publicly available daily data from Yahoo Finance for the benchmark (e.g., SPY) and a set of sector ETFs (e.g., XLB, XLE, XLF, XLI, XLK, XLP, XLU, XLV, XLY, XLRE, XLC) from 2015/01/01 to 2025/09/30. See Figure \ref{norm-price}, which plots normalized adjusted close series for SPY and selected sectors over 2015–2025, illustrating broad co-movement with intermittent sector rotations for context only.
\begin{figure}[h] 
\centering
\includegraphics[width=0.8\textwidth]{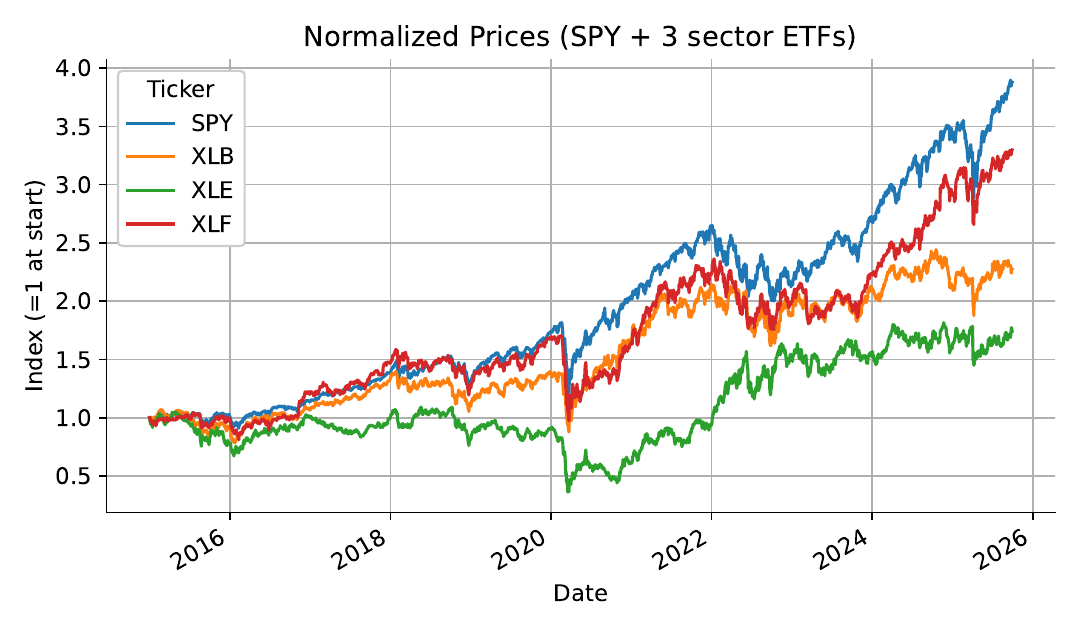} 
\caption{Normalized prices for ETFs and benchmark}
\label{norm-price}
\end{figure}
Series are aligned on U.S. trading days; rows with a missing benchmark price or any missing sector price are dropped. Let $P_t^{\text {bench }}$ denote the benchmark adjusted close and $P_t^{(j)}$ the adjusted close of sector $j \in\{1, \cdots, m\}$. Returns are computed as $\log$ differences $r_t^{\text {bench }}=\log P_t^{\text {bench }}-\log P_{t-1}^{\text {bench }}$ and $r_t^{(j)}=\log P_t^{(j)}- \log P_{t-1}^{(j)}$. To stabilize later volatility estimates, we optionally trim each return series at the $1^{\text{st }}$/$99^{\text{th }}$ percentiles within the calibration sample only. The Table \ref{tab:logret_latest_week} shows the log-return output for benchmark SPY other three sector ETFs. 

Time-varying target exposures $a_t^* \in \mathbb{R}^m$ are estimated by rolling ordinary least squares over a fixed lookback length $L_{\text{ols }}$ = 60 days. For each $t>L_{\text{ols }}$, we solve
$$
a_t^* \in \arg \min _{a \in \mathbb{R}^m} \sum_{\tau=t-L_{\mathrm{ols}}+1}^t\left(r_\tau^{\mathrm{bench}}-r_\tau^{\mathrm{sec} \top} a\right)^2, \quad r_\tau^{\mathrm{sec}}=\left(r_\tau^{(1)}, \ldots, r_\tau^{(m)}\right)^{\top},
$$
without intercept so that $a_t^*$ captures pure sector exposures (See Figure \ref{figure3}, left). If the normal matrix is ill-conditioned, a small ridge penalty $\alpha\|a\|_2^2$ (with $\alpha>0$ ) can be added; by default we take $\alpha=0$. Long only or budget constraints are optional post projections and are not required for the LQ formulation.
\begin{table}[h]
\centering
\begin{tabular}{|l|r|r|r|r|}
\hline
Date & SPY & XLB & XLC & XLE \\
\hline
2025-09-23 & -0.005458 & -0.003214 & -0.001435 &  0.017077 \\
2025-09-24 & -0.003187 & -0.012287 & -0.008311 &  0.012923 \\
2025-09-25 & -0.004624 & -0.013122 & -0.003412 &  0.008926 \\
2025-09-26 &  0.005713 &  0.011548 &  0.009694 &  0.009173 \\
2025-09-29 &  0.002806 &  0.003820 &  0.003211 & -0.018542 \\
\hline
\end{tabular}
\caption{Daily log-returns for SPY and three ETFs over the latest five trading days}
\label{tab:logret_latest_week}
\end{table}

The noise feeding the state equation is then taken directly from the data as the one-step drift in targets, $w_t=a_{t+1}^*-a_t^*$, which we treat as zero-mean and independent of the initial state(See Figure \ref{figure3}, right). The tracking state $x_t$ is the deviation between actual sector exposures and $a_t^*$; unless otherwise stated we initialize $x_0=\mathbf{0}_{m \times 1}$ at the start of the evaluation sample. 
\begin{figure}[h]
\centering
\subfigure{\includegraphics[width=0.48\textwidth]{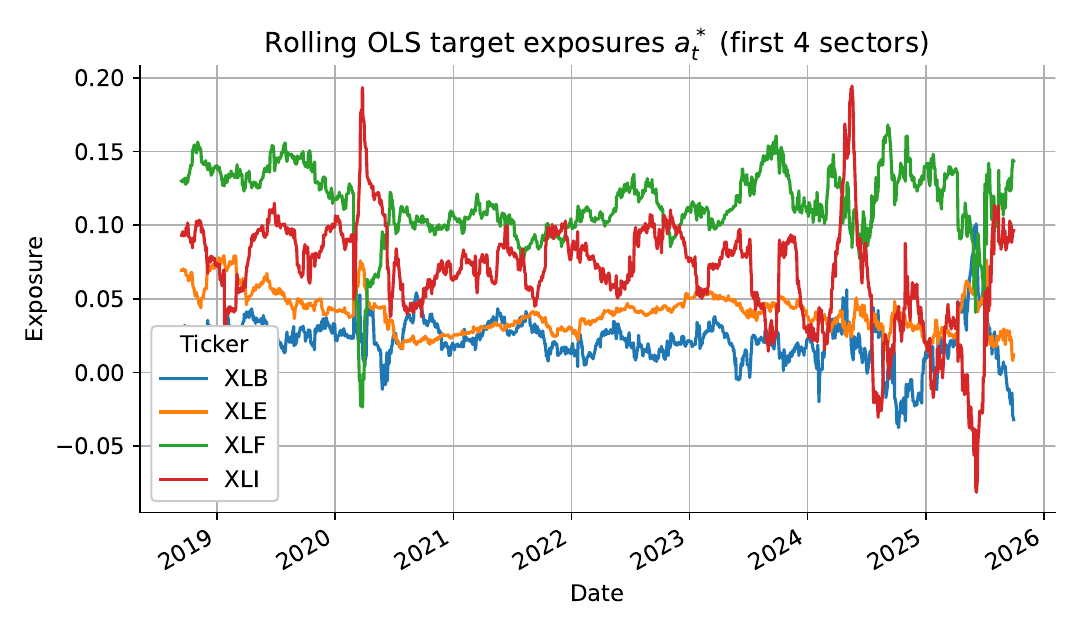}}
\hfill
\subfigure{\includegraphics[width=0.48\textwidth]{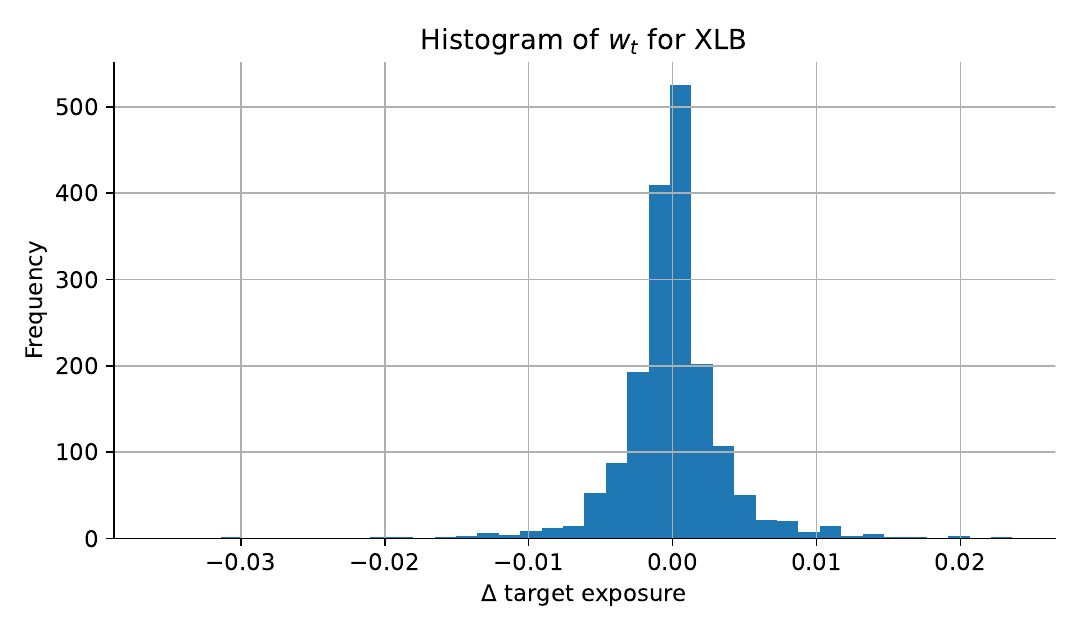}}
\caption{Empirical features for the tracking model}
\label{figure3}
\end{figure}
Risk scaling uses the benchmark's realized volatility. Let $\widehat{\sigma}_{\text {daily }}$ be the rolling standard deviation of $r_t^{\text {bench }}$ computed over $L_{\text {vol }}$ trading days, which is the same length as the OLS window. For a horizon of $T$ decision steps, the per step scale is
$$
\sigma_{\text {step }}=\widehat{\sigma}_{\text {daily }} / \sqrt{T},
$$
where $T = 120$. We set $Q_t=\lambda_{\text{step }}  \sigma_{\text {step }}^2 \mathbf{I}_m$ for all $t$ with $\lambda_{\text{step }}= 10$ and $\bar{q} = 10^6$ for $Q_T$. trading cost weights are diagonal, $R_t=\operatorname{diag}\left(c_1, \ldots, c_m\right)$, with each $c_j$ calibrated from observable liquidity via
$$
c_j=\kappa_{\mathrm{tc}} \frac{(S_0^{(j)})^2}{\mathrm{ADV}_j},
$$
where $S_0^{(j)}$ is the most recent adjusted close of sector $j$, $\mathrm{ADV}_j$ is the $L_{\text {adv }} = 60$ days moving average of its share volume, and $\kappa_{\mathrm{tc}}>0$ sets the overall trading effort scale (typical order $10^{-6}-10^{-5}$ ). This specification is dimensionally consistent when $x_t$ and $u_t$ are measured in shares; if exposures are in dollars, the same form applies after the natural rescaling by price \cite{benidis2018optimization}.

To prevent look ahead, $\widehat{\sigma}_{\text {daily }}, a_t^*$, and $\mathrm{ADV}_j$ are computed using only information available up to time $t$. Fixing $Q_t$, $R_t$, $\lambda_{\text{step }}$, $\kappa_{\mathrm{tc}}$, and $\bar{q}$, we calibrate on a training period, then evaluate on a hold-out period using the realized noise path $\{w_t\} ; Q_t$ and $R_t$ are kept at their last calibrated values. Dates with missing prices, hence undefined $w_t$, are omitted. The full pipeline is deterministic and thus fully reproducible.

\subsubsection{Evaluation Results}

\begin{figure}[h]
\centering 
\subfigure{\includegraphics[width=0.48\textwidth]{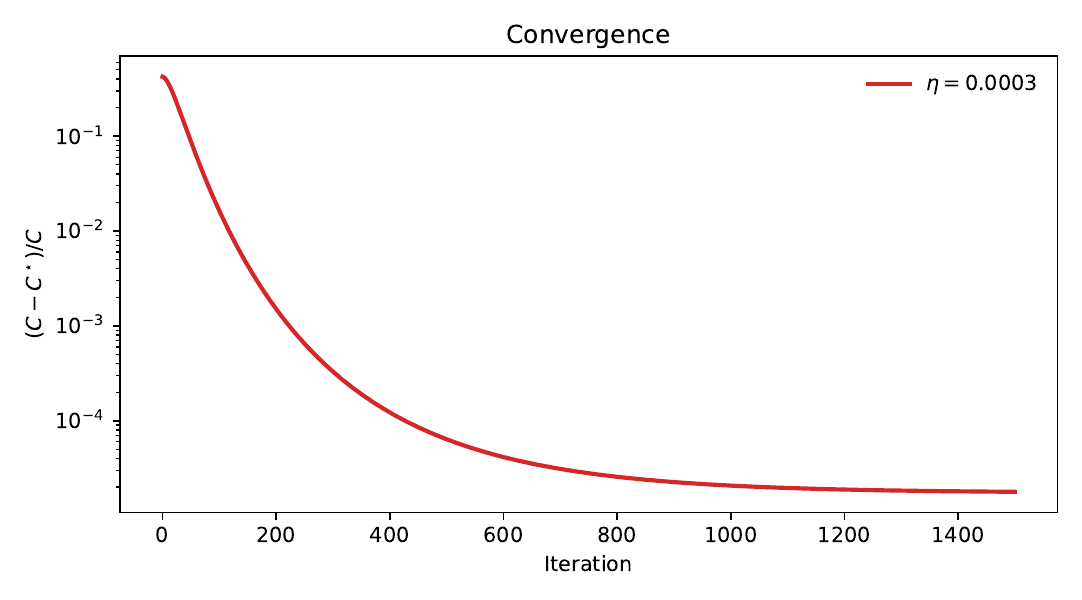}} 
\hfill 
\subfigure{\includegraphics[width=0.48\textwidth]{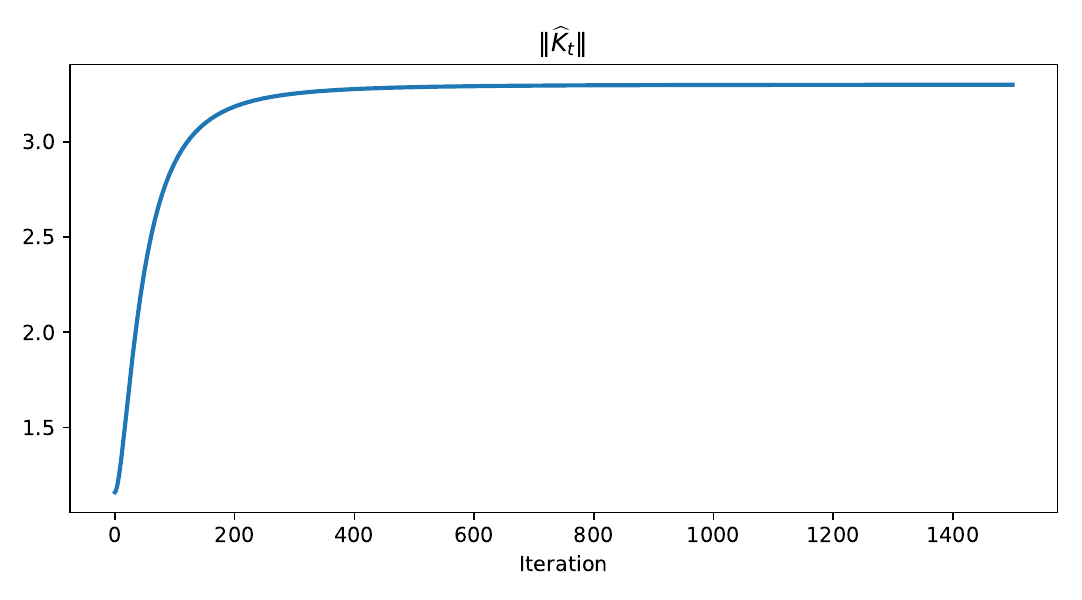}} 
\caption{DRO-LQ: policy gradient convergence and gain stabilization.} 
\label{figure4} 
\end{figure}
Two complementary views of the same learning run for the tracker in Figure \ref{figure4}. The left (normalized error) shows a smooth, monotone descent over roughly three orders of magnitude from about $10^{0}$ down to the low $10^{-4}$ range before flattening into a stable plateau, which indicates the policy gradient has essentially reached the Riccati benchmark and further gains are marginal. The right one (gain size) starts with $\|\widehat{K}_t\|$ from initial $\|\widehat{K}_0\|$, then contracts steadily to a tight band a little above 3 where it remains flat; this stabilization line-up in time with the error plateau, confirming the controller has settled into a steady, well-conditioned feedback policy.

We plot the percentage difference versus the baseline because the raw convergence curves sit on very different scales, making tiny gaps visually indistinguishable. Normalizing to a relative metric makes the series dimensionless, isolating the marginal effect of each parameter from mere scale shifts. We then apply a running mean to the difference series to suppress high-frequency noise (from sampling error and line search) while preserving the underlying trend.\\

\begin{figure}[h]
\centering
\begin{minipage}[b]{0.48\textwidth}
\centering
\includegraphics[width=\linewidth]{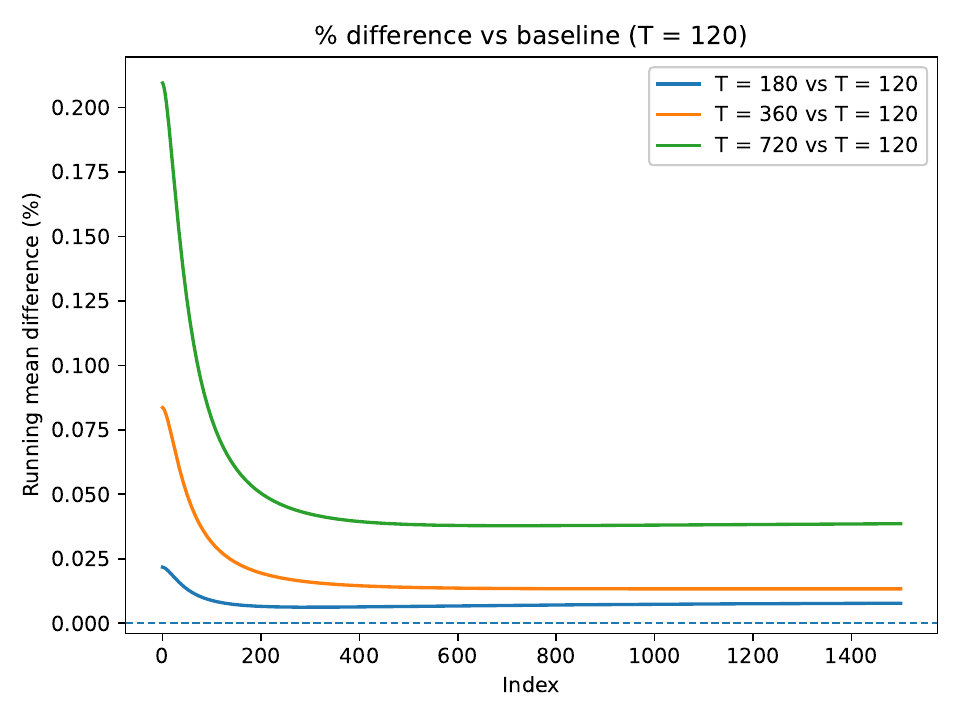}
{\small (a)}
\end{minipage}
\begin{minipage}[b]{0.48\textwidth}
\centering
\includegraphics[width=\linewidth]{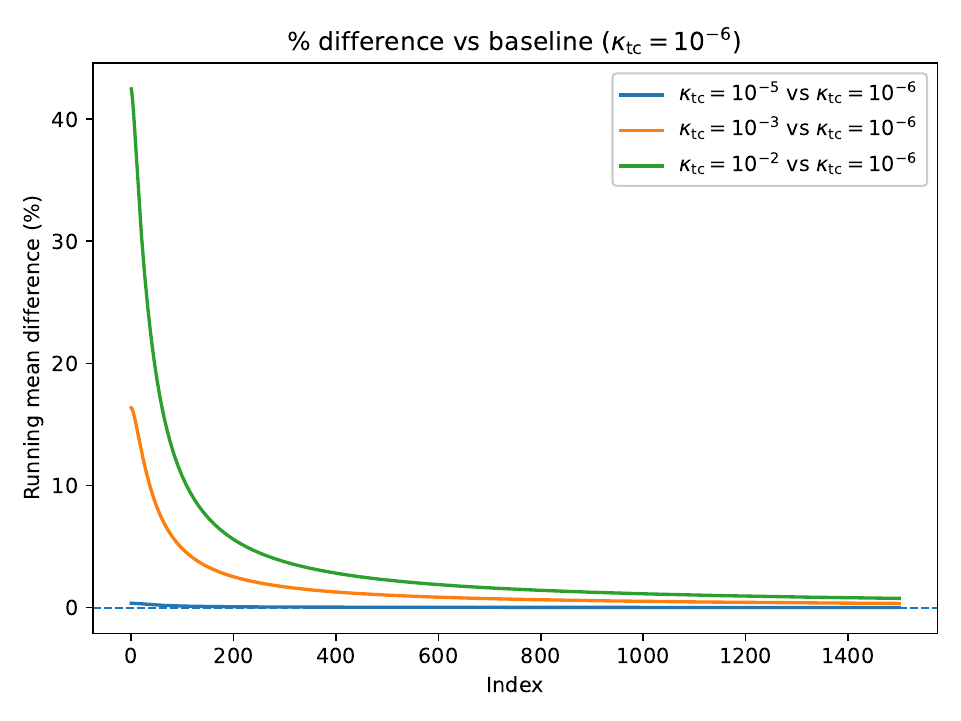}
{\small (b)}
\end{minipage}\hfill
\vspace{0.6em}
\begin{minipage}[b]{0.48\textwidth}
\centering
\includegraphics[width=\linewidth]{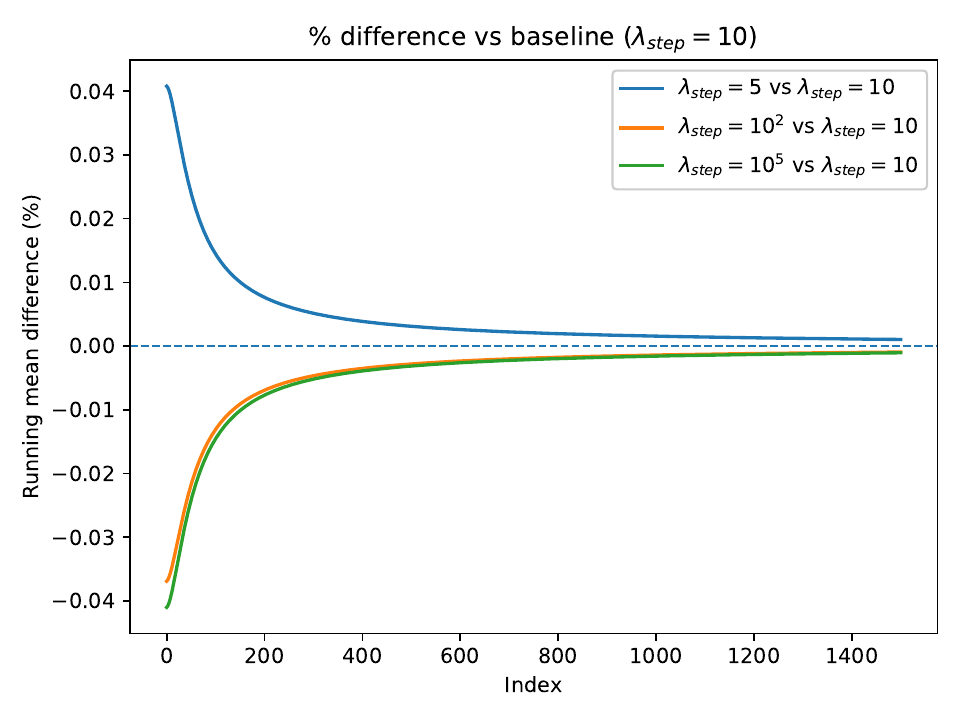}
{\small (c)}
\end{minipage}
\begin{minipage}[b]{0.48\textwidth}
\centering
\includegraphics[width=\linewidth]{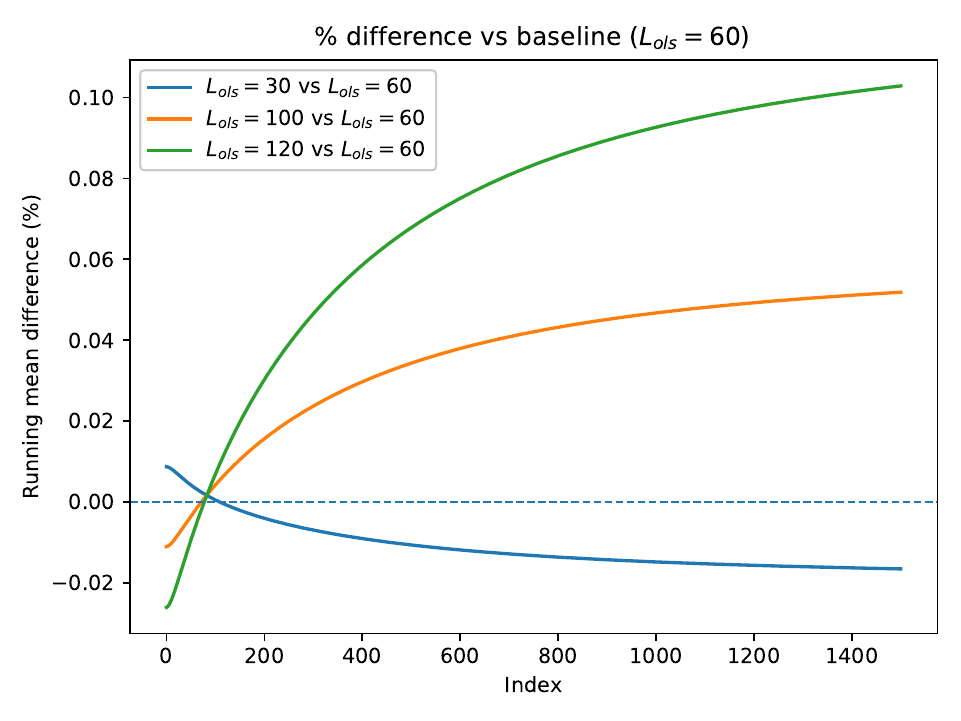}
{\small (d)}
\end{minipage}\hfill
\caption{DRO-LQ: policy gradient convergence and gain stabilization.}
\label{figure5}
\end{figure}

\noindent $\mathbf{T}$. In Figure \ref{figure5}(a), the running-mean gaps start large and positive and then converge toward a fixed level; moreover, larger $T$ produces a larger initial gap, consistent with a modest but systematic horizon effect rather than instability, and with attenuating marginal benefit as $T$ grows. \\

\noindent $\mathbf{\kappa_{t c}}$. When $\kappa_{t c}$ rises materially above baseline, the start gap can approach $40 \%$, then declines and is near zero in the tail; larger $\kappa_{t c}$ penalizes control effort more, shrinking gains and slowing trading, so curves separate early and flatten as the optimizer settles.\\

\noindent $\mathbf{\lambda_{\text{step }}}$. Smaller $\lambda$ yields a positive signed difference; much larger $\lambda$ yields a negative one; the start gap is clear but the tail drifts to 0, and pushing $\lambda$ from 10 to $10^3-10^5$ changes results only slightly. \\

\noindent $\mathbf{L_{ols}}$. The start is small, but the tail becomes larger; from Figure \ref{figure5}(d), we can find larger $L$ produces a larger positive tail. Since $L_{ols}$ sets the regression window for $a_t^*$ with $w_t=\Delta a_t$, longer windows smooth exposures and shrink empirical $\Sigma_w$; shorter windows are more reactive and noisier, so the smoothed estimator induces a stable offset that persists. 

\section{Conclusion}
This paper develops a perturbance-wise framework for finite horizon LQ control that unifies the classical model, a constraint embedded affine policy class, and a distributionally robust (Wasserstein) variant under a single learning view. By working in an augmented representation, we obtain ARE backward recursions with nonzero-mean noise and an exact policy gradient formula; treating empirical moments as fixed yields a clean descent direction. We further prove global convergence of constant stepsize gradient updates with step sizes chosen as simple polynomials of problem parameters.

Empirically, the mean–variance study clarifies when feasibility alters the unconstrained solution and when it is essentially neutral. On dynamic benchmark tracking with real data, the DRO controller trained by policy gradient exhibits stable, monotone cost decrease toward the baseline and remains well-behaved under realistic scaling of $Q$ and $R$. Sensitivity analyses show intuitive trade-offs: longer horizons improve tracking but increase turnover; higher trading cost intensity regularizes gains and slows adjustment; stronger state penalties reduce variance at the expense of control effort; longer estimation windows smooth but delay adaptation. Overall, the results provide an implementable toolkit for finite time LQ learning under constraints and distributional ambiguity, and point toward extensions to richer ambiguity sets, chance constraints, partial observation, and variance adaptive policy gradient schemes.


\begin{thebibliography}{00}


\bibitem{kalman1960contributions}
Kalman, Rudolf Emil and others,
\textit{Contributions to the theory of optimal control},
Bol. soc. mat. mexicana,
5(2):102--119,
1960.

\bibitem{wonham1968matrix}
Wonham, William M.,
\textit{On a matrix Riccati equation of stochastic control},
SIAM Journal on Control,
6(4):681--697,
SIAM,
1968.

\bibitem{bemporad2002explicit}
Bemporad, Alberto and Morari, Manfred and Dua, Vivek and Pistikopoulos, Efstratios N.,
\textit{The explicit linear quadratic regulator for constrained systems},
Automatica,
38(1):3--20,
Elsevier,
2002.

\bibitem{wu2018explicit}
Wu, Weiping and Gao, Jianjun and Li, Duan and Shi, Yun,
\textit{Explicit solution for constrained scalar-state stochastic linear-quadratic control with multiplicative noise},
IEEE Transactions on Automatic Control,
64(5):1999--2012,
IEEE,
2018.

\bibitem{tondel2003evaluation}
T{\o}ndel, Petter and Johansen, Tor Arne and Bemporad, Alberto,
\textit{Evaluation of piecewise affine control via binary search tree},
Automatica,
39(5):945--950,
Elsevier,
2003.

\bibitem{wen2009analytical}
Wen, Chengtao and Ma, Xiaoyan and Ydstie, B.\ Erik,
\textit{Analytical expression of explicit MPC solution via lattice piecewise-affine function},
Automatica,
45(4):910--917,
Elsevier,
2009.

\bibitem{kvasnica2011stabilizing}
Kvasnica, Michal and L{\"o}fberg, Johan and Fikar, Miroslav,
\textit{Stabilizing polynomial approximation of explicit MPC},
Automatica,
47(10):2292--2297,
Elsevier,
2011.

\bibitem{mohajerin2018data}
Mohajerin Esfahani, Peyman and Kuhn, Daniel,
\textit{Data-driven distributionally robust optimization using the Wasserstein metric: Performance guarantees and tractable reformulations},
Mathematical Programming,
171(1):115--166,
Springer,
2018.

\bibitem{taskesen2023distributionally}
Taskesen, Bahar and Iancu, Dan and Ko{\c{c}}yi{\u{g}}it, {\c{C}}a{\u{g}}{\i}l and Kuhn, Daniel,
\textit{Distributionally robust linear quadratic control},
Advances in Neural Information Processing Systems,
36:18613--18632,
2023.

\bibitem{kargin2024wasserstein}
Kargin, Taylan and Hajar, Joudi and Malik, Vikrant and Hassibi, Babak,
\textit{Wasserstein distributionally robust regret-optimal control over infinite-horizon},
6th Annual Learning for Dynamics \& Control Conference,
1688--1701,
PMLR,
2024.

\bibitem{fazel2018global}
Fazel, Maryam and Ge, Rong and Kakade, Sham and Mesbahi, Mehran,
\textit{Global convergence of policy gradient methods for the linear quadratic regulator},
International Conference on Machine Learning,
1467--1476,
PMLR,
2018.

\bibitem{hambly2021policy}
Hambly, Ben and Xu, Renyuan and Yang, Huining,
\textit{Policy gradient methods for the noisy linear quadratic regulator over a finite horizon},
SIAM Journal on Control and Optimization,
59(5):3359--3391,
SIAM,
2021.

\bibitem{malik2020derivative}
Malik, Dhruv and Pananjady, Ashwin and Bhatia, Kush and Khamaru, Koulik and Bartlett, Peter L.\ and Wainwright, Martin J.,
\textit{Derivative-free methods for policy optimization: Guarantees for linear quadratic systems},
Journal of Machine Learning Research,
21(21):1--51,
2020.

\bibitem{dean2020sample}
Dean, Sarah and Mania, Horia and Matni, Nikolai and Recht, Benjamin and Tu, Stephen,
\textit{On the sample complexity of the linear quadratic regulator},
Foundations of Computational Mathematics,
20(4):633--679,
Springer,
2020.

\bibitem{fournier2015rate}
Fournier, Nicolas and Guillin, Arnaud,
\textit{On the rate of convergence in Wasserstein distance of the empirical measure},
Probability Theory and Related Fields,
162(3):707--738,
Springer,
2015.

\bibitem{wainwright2019high}
Wainwright, Martin J.,
\textit{High-dimensional statistics: A non-asymptotic viewpoint},
vol.\ 48,
Cambridge University Press,
2019.

\bibitem{kim2023distributional}
Kim, Kihyun and Yang, Insoon,
\textit{Distributional robustness in minimax linear quadratic control with Wasserstein distance},
SIAM Journal on Control and Optimization,
61(2):458--483,
SIAM,
2023.

\bibitem{markowitz1952portfolio}
Markowitz, Harry,
\textit{Portfolio selection},
Journal of Finance,
7(1):77--91,
DOI,
1952.

\bibitem{cui2014optimal}
Cui, Xiangyu and Gao, Jianjun and Li, Xun and Li, Duan,
\textit{Optimal multi-period mean--variance policy under no-shorting constraint},
European Journal of Operational Research,
234(2):459--468,
Elsevier,
2014.

\bibitem{zhang2025data}
Zhang, Haoran and Zhang, Wenhao and Wu, Xianping,
\textit{Data-Driven Long-Term Asset Allocation with Tsallis Entropy Regularization},
arXiv preprint arXiv:2509.23062,
2025.

\bibitem{yao2006tracking}
Yao, David D.\ and Zhang, Shuzhong and Zhou, Xun Yu,
\textit{Tracking a financial benchmark using a few assets},
Operations Research,
54(2):232--246,
INFORMS,
2006.

\bibitem{moallemi2017dynamic}
Moallemi, Ciamac C.\ and Sa{\u{g}}lam, Mehmet,
\textit{Dynamic portfolio choice with linear rebalancing rules},
Journal of Financial and Quantitative Analysis,
52(3):1247--1278,
Cambridge University Press,
2017.

\bibitem{garleanu2013dynamic}
G{\^a}rleanu, Nicolae and Pedersen, Lasse Heje,
\textit{Dynamic trading with predictable returns and transaction costs},
The Journal of Finance,
68(6):2309--2340,
Wiley Online Library,
2013.

\bibitem{benidis2018optimization}
Benidis, Konstantinos and Feng, Yiyong and Palomar, Daniel P.\ and others,
\textit{Optimization methods for financial index tracking: From theory to practice},
Foundations and Trends{\textregistered} in Optimization,
3(3):171--279,
Now Publishers, Inc.,
2018.

\bibitem{beiglbock2012duality}
Beiglb{\"o}ck, Mathias and Pratelli, Aldo,
\textit{Duality for rectified cost functions},
Calculus of Variations and Partial Differential Equations,
45(1):27--41,
Springer,
2012.


\end{thebibliography}
\end{document}